\documentclass[11pt,reqno]{amsart}
\usepackage[T1]{fontenc}
\usepackage{lmodern}
\usepackage[a4paper,margin=29mm]{geometry}
\usepackage{amsmath,amssymb,amsthm,mathtools}
\usepackage{microtype}
\usepackage{booktabs,tabularx,array}
\usepackage{placeins,needspace}
\usepackage{xcolor}
\usepackage[nocompress]{cite}
\usepackage[colorlinks=true,linkcolor=blue!45!black,citecolor=blue!45!black,urlcolor=blue!45!black]{hyperref}

\makeatletter

\@namedef{r@tocindent4}{0pt}
\def\l@paragraph{\@tocline{4}{0pt}{1pc}{9pc}{}}

\newcommand{\subsubsubsection}{%
  \@startsection{paragraph}{4}{\z@}%
    {-3.25ex \@plus -1ex \@minus -.2ex}%
    {-.5em}%
    {\normalfont\slshape\lsstyle}}
\makeatother
\makeatletter
\@namedef{r@tocindent4}{0pt}

\def\l@paragraph{\@tocline{4}{0pt}{1pc}{9pc}{}}
\makeatother

\newcommand{\PP}{\mathbb{P}}
\newcommand{\EE}{\mathbb{E}}

\newcommand{\cE}{\mathcal{E}}

\newcommand{\eps}{\varepsilon}
\renewcommand{\theta}{\vartheta}

\newtheorem{theorem}{Theorem}[section]

\newtheorem{lemma}[theorem]{Lemma}
\newtheorem{problem}[theorem]{Problem}
\newtheorem{corollary}[theorem]{Corollary}
\newtheorem{proposition}[theorem]{Proposition}
\theoremstyle{plain}
\newtheorem{definition}[theorem]{Definition}
\newtheorem{observation}[theorem]{Observation}
\theoremstyle{remark}
\newtheorem{remark}{Remark}[section]
\numberwithin{equation}{section}

\title[Conflict-free Hamilton cycles in pseudorandom graphs]{Resilience of rainbow Hamilton cycles\\ in pseudorandom graphs}
\author{Elad Aigner-Horev, Dan Hefetz, Yury Person}
\date{}
\keywords{Rainbow Hamilton cycle, local resilience, spread measure, lopsided local lemma, pseudorandom graph}

\begin{document}
\begin{abstract}
For every fixed $\eps\in(0,1/2)$, we prove that every spanning subgraph $H$ of an $n$-vertex $(p,\beta)$-bijumbled graph satisfying $\delta(H)\geq(1/2+\eps)pn$ contains a rainbow Hamilton cycle under every globally $\mu pn$-bounded colouring, provided $\beta\leq cpn$ and $pn\geq M$ both hold. The same assertion holds under the relative condition $\deg_H(v)\geq(1/2+\eps)\deg_G(v)$ for every vertex $v$, provided $\delta(G)\geq(1-\eps/4)pn$ holds. Under either degree condition, there are at least $(apn)^n$ such cycles. Here, $c,\mu,a,M>0$ depend only on $\eps$; in particular, $pn$ may be a sufficiently large constant. If $pn\geq D\log n$, then, upon fixing a coloured $H$, retaining each edge independently with probability $D\log n/(pn)$ preserves rainbow Hamiltonicity asymptotically almost surely. The logarithmic degree requirement is needed only for this percolation conclusion. The existence theorem answers a problem of Coulson, Keevash, Perarnau and Yepremyan for random graphs and extends it to deterministic pseudorandom hosts.

In fact, all three conclusions hold with rainbowness replaced by avoidance of prescribed pairs of edges; each edge having at most $\mu pn$ conflicting partners. We construct an $O(1/(pn))$-spread probability measure on conflict-free Hamilton cycles; this then yields the enumeration and percolation results. 
\end{abstract}
\maketitle
\raggedbottom

\section{Introduction}\label{sec:introduction}

The systematic study of {\sl resilience} in graphs commenced with the seminal work of Sudakov and Vu~\cite{SV2008}. For an increasing graph property $\mathcal P$ possessed by a graph $G$, its \emph{local resilience} is defined to be the least integer $r$ for which deleting at most $r$ edges incident to each vertex can ruin $\mathcal P$~\cite[Definition~1.2]{SV2008}. Equivalently, it is the minimum of $\Delta(D)$ over subgraphs $D\subseteq G$ for which $G-D$ does not possess $\mathcal P$. 

A theorem of Dirac~\cite{Dirac1952} asserts that every graph on $n\geq3$
vertices with minimum degree at least $n/2$ contains a Hamilton cycle.
Its random-graph analogue, established by Lee and Sudakov~\cite{LS2012},
states that, for every fixed $\eps\in(0,1/2)$, there exists
$C=C(\eps)>0$ such that, whenever $p\coloneqq p(n) \geq C\log n/n$,
asymptotically almost surely (a.a.s., hereafter) every spanning subgraph
$H\subseteq G\sim\mathbb G(n,p)$ with
$\delta(H)\geq(1/2+\eps)pn$ contains a Hamilton cycle. Hence, for Hamiltonicity, the natural benchmark is $1/2$ leading to the following two notions of {\em Dirac subgraphs} that are tailored to the Hamiltonicity phenomenon. 

\begin{definition}[Relative Dirac subgraphs]\label{def:relative-dirac}
Let $\eps\in(0,1/2)$. A spanning subgraph $H$ of a graph $G$ is called a \emph{relative $\eps$-Dirac subgraph} of $G$ provided
\begin{equation}\label{eq:relative-dirac}
 \deg_H(v)\geq\left(\frac12+\eps\right)\deg_G(v)
\end{equation}
holds, for every $v \in V(G)$. 
\end{definition}

\begin{definition}[Absolute Dirac subgraphs]\label{def:absolute-dirac}
Let $\eps\in(0,1/2)$ and let $p\in(0,1]$. A spanning subgraph $H$ of an $n$-vertex graph $G$ is called an \emph{absolute $\eps$-Dirac subgraph} of $G$ at density $p$ provided
\begin{equation}\label{eq:absolute-dirac}
 \delta(H)\geq\left(\frac12+\eps\right)pn
\end{equation}
holds. 
\end{definition}

A subgraph of an edge-coloured graph is said to be {\em rainbow} with respect to a given colouring, provided all its edges are assigned distinct colours. The corresponding notion of {\sl rainbow resilience} is introduced through a broader programme proposed by Coulson, Keevash, Perarnau and Yepremyan~\cite{CKPY2020} who stipulated that the following principle ought to be true. 
\begin{quote}
\raggedright
\slshape
Under suitable restrictions on colour classes, the asymptotic
minimum degree thresholds for prescribed rainbow spanning structures should
coincide with their uncoloured Dirac thresholds.\footnote{The Dirac
threshold of a spanning property is the least minimum degree forcing
that property in every graph of the given order; its asymptotic
version is 
the limit superior of the appropriately normalised degree threshold; it is obtained by dividing by the maximum possible degree.}
%this threshold divided by the number of vertices, over admissible orders.}
\end{quote}

A specific instantiation of restrictions imposed on colour classes is that of {\sl global boundedness} of edge-colourings of graphs; specifically, an edge-colouring of a graph $G$ is said to be {\em globally $k$-bounded} provided each colour appears at most $k$ times across all edges of the graph.   
Our main result carries out this programme in the special case of rainbow Hamiltonicity in pseudorandom graphs coloured using a globally bounded colouring. 

\medskip
\noindent
\textbf{Resilience of Hamilton cycles in random graphs.}
Sudakov and Vu~\cite[Theorem~2.5]{SV2008} proved that the local resilience of $\mathbb G(n,p)$ with respect to Hamiltonicity is $(1/2+o(1))pn$ whenever $p>\log^4 n/n$; the aforementioned result of Lee and Sudakov~\cite[Theorem~1.1]{LS2012} subsequently reached the logarithmic degree scale. 
Montgomery~\cite[Theorem~1.2]{Montgomery2019} and, independently, Nenadov, Steger and Truji\'c~\cite[Theorem~1.5]{NST2019} obtained a hitting-time version under which the random graph process is already resilient against deleting a $(1/2-\eps)$-fraction of the edges at each vertex when its minimum degree first reaches two. These results describe the resilience of Hamilton cycles in random graphs without colouring restrictions.

\medskip
\noindent
\textbf{Hamilton cycles in pseudorandom graphs.}
For vertex sets $A,B\subseteq V(G)$, let $e_G(A,B)$ denote the number of ordered pairs $(a,b)\in A\times B$ such that $ab\in E(G)$; an edge spanned by $A\cap B$ is then counted twice. A graph $G$ is said to be \emph{$(p,\beta)$-bijumbled} provided
$$
 \bigl|e_G(A,B)-p|A||B|\bigr|
 \leq \beta\sqrt{|A||B|}
$$
holds for all $A,B\subseteq V(G)$;
the sets need not be disjoint. We refer to $\beta$ as the {\em bijumbledness/discrepancy parameter}. 

An \emph{$(n,d,\lambda)$-graph} is a $d$-regular graph on $n$ vertices whose adjacency matrix has its second largest eigenvalue in absolute value not exceeding $\lambda$. The so called {\sl expander mixing lemma} implies that every such graph is $(d/n,\lambda)$-bijumbled~\cite[Theorem~2.1]{KS2003}. Thus, the family of bijumbled graphs includes the family of $(n,d,\lambda)$-graphs. 

Krivelevich and Sudakov~\cite[Theorem~1.1]{KS2003} proved that every sufficiently large $(n,d,\lambda)$-graph is Hamiltonian whenever
$$
 \frac{\lambda}{d}
 \leq \frac{(\log\log n)^2}
 {1000\log n\log\log\log n}.
$$
They conjectured that a sufficiently small constant upper bound on $\lambda/d$ should suffice. Dragani\'c, Montgomery, Munh\'a Correia, Pokrovskiy and Sudakov~\cite[Theorem~1.5]{DMMPS2024} resolved this conjecture, obtaining Hamiltonicity whenever $d/\lambda$ is at least an absolute constant (see also~\cite{FerberHanMaoVershynin2024} for another independent proof under additional assumption $d\ge \log^6 n$). The theorem of~\cite{DMMPS2024} permits $d$ to be a sufficiently large constant, and follows from a more general Hamiltonicity theorem for expanding graphs.

For the corresponding resilience problem for  Hamiltonicity, Sudakov and Vu~\cite[Theorem~4.8 and the subsequent remark]{SV2008} established that retaining at least $(1/2 +\eps)d$ of the edges per vertex of an $(n,d,\lambda)$-graph satisfying $d/\lambda>(\log n)^{1+\xi}$, remains Hamiltonian, whenever $\xi,\eps>0$ are fixed. More recently, Dragani\'c, Kim, Lee, Munh\'a Correia, Pavez-Sign\'e and Sudakov~\cite[Theorem~1.5]{DKLMPS2025} proved that, for every fixed $\eps>0$, a sufficiently large constant lower bound on $d/\lambda$ ensures that every spanning subgraph of minimum degree at least $(1/2+\eps)d$ is Hamiltonian. 

% Their more general criterion~\cite[Theorem~1.4]{DKLMPS2025} requires a suitable minimum degree and upper bounds on the numbers of edges between small vertex sets. This criterion supplies the uncoloured Hamiltonicity theorem used below. Our aim is to retain Hamiltonicity under bounded deterministic colourings and, more generally, bounded pairwise conflicts, while simultaneously obtaining enumeration and percolation conclusions.

\medskip
\noindent
\textbf{Resilience of Hamilton cycles in graph transversals.} Another notion of rainbowness is pursued in the literature. 
For a family $\mathcal G=(G_1,\ldots,G_n)$ on a common set of $n$ vertices, a \emph{Hamilton transversal} consists of a Hamilton cycle $C$ and a bijection $\psi:E(C)\to[n]$ such that $e\in E(G_{\psi(e)})$ for every $e\in E(C)$. Aharoni~\cite[Conjecture~1.6]{ADGMS2020} conjectured that Dirac's theorem~\cite{Dirac1952} should hold for such families, meaning that minimum degree at least $n/2$ in every $G_i$ should force a Hamilton transversal. Cheng, Wang and Zhao~\cite{CWZ2021} proved the asymptotic version under $\delta(G_i)\geq(1/2+\eps)n$ using an absorption argument. Joos and Kim~\cite{JK2020} resolved the conjecture exactly, obtaining the conclusion from $\delta(G_i)\geq n/2$ for every $i$ and every $n\geq3$.

Bowtell, Morris, Pehova and Staden~\cite[Theorem~1.2]{BMPS2025}
considered collections of graphs on a common set of $n$ vertices.
They proved that, for every fixed $\eps>0$ and sufficiently large
$n$, if each graph has minimum degree at least $(1/2+\eps)n$,
then one may prescribe which graph supplies each successive edge
of a Hamilton cycle, and a cycle satisfying all these requirements
exists. The prescription may use the same graph repeatedly;
for example, when $n$ is even, it may require the edges to
alternate between two specified graphs. Gupta, Hamann, M\"uyesser, Parczyk and Sgueglia~\cite[Theorem~1.3(A)]{GHMPS2023} developed a general framework for transversal Dirac-type results; in particular, $rn$ graphs of minimum degree at least $(r/(r+1)+\eps)n$ contain a transversal copy of the $r$th power of a Hamilton cycle, for fixed $r\geq2$ and $\eps>0$ and sufficiently large $n$.

Ferber, Han and Mao~\cite[Theorems~1.3 and~1.4]{FHM2024} established Hamilton-transversal resilience in two random-host models; the first is comprised of independent copies $G_1,\ldots,G_n$ of $\mathbb G(n,p)$; the second model considers sequences of absolute $\eps$-Dirac subgraphs taken from a single copy of $\mathbb G (n,p)$. 
For the first model, their proof sustains $p\geq C\log n/n$ whereas for the second they require $p\gg\log n/n$.

Anastos and Chakraborti~\cite[Theorems~1.4 and~1.5]{AC2026} start with fixed dense Dirac graphs (such graphs have minimum degree at least half of the number of their vertices), and then intersect each of them with one $\mathbb G(n,p)$; this, they show, has a Hamilton-transversal threshold of order $\log n/n$. For $n$ independent sparsifications of a single Dirac graph, the threshold drops to order $\log n/n^2$. The latter conclusion does not extend without qualification to arbitrary Dirac families, where parity obstructions must be considered. Their proofs employ spread measures, discussed below. In a complementary direction, Christoph, Martinsson and Milojevi\'c~\cite[Theorem~1.1]{CMM2025} proved that $n$ independent copies of $\mathbb G(n,p)$ a.a.s. realise {\sl\lsstyle every} prescribed Hamilton-cycle index pattern when $p\geq C\log n/n$. This is a universality result for the random family, rather than a resilience assertion after adversarial deletions.

\medskip
\noindent
\textbf{Rainbow Hamilton cycles under deterministic colourings.} Albert, Frieze and Reed~\cite{AFR1995} showed that every colouring of a sufficiently large complete graph with colour classes of size at most $cn$ contains a rainbow Hamilton cycle, whenever $c<1/32$ is fixed. Coulson and Perarnau~\cite[Theorem~1.3]{CP2020} extended this result to the exact Dirac threshold, showing that, for some absolute $\mu>0$, every sufficiently large graph with minimum degree at least $n/2$ has a rainbow Hamilton cycle under every globally $\mu n$-bounded colouring. Glock and Joos~\cite{GJ2020} developed a rainbow blow-up lemma and a rainbow bandwidth theorem for globally bounded colourings, providing a general embedding approach fitting dense graphs. 

Whilst studying the emergence of rainbow $F$-factors, with $F$ some fixed hypergraph, in (deterministically) edge-coloured coloured hypergraphs, Coulson, Keevash, Perarnau and Yepremyan~\cite[Section~7]{CKPY2020} posed the following rainbow resilience question for random graphs; affirmative answer to which constitutes an rainbow extension of the work of Lee and Sudakov~\cite{LS2012}.   

\begin{problem}[Coulson, Keevash, Perarnau and Yepremyan]\label{prob:ckpy}
For every fixed $\eps\in(0,1/2)$, is it true that there are constants $C,\mu>0$, depending only on $\eps$, such that the following holds? Whenever $C\log n/n\leq p \coloneqq p(n)\leq1$, a.a.s. every globally $\mu pn$-bounded colouring of every absolute $\eps$-Dirac subgraph $H\subseteq G\sim\mathbb G(n,p)$ contains a rainbow Hamilton cycle.
\end{problem}

For sparse random hosts, Krivelevich, Lee and Sudakov~\cite[Theorem~1.4]{KLS2016} proved that, for an absolute $\mu>0$ and $p\gg\log n/n$, a.a.s. every globally $\mu pn$-bounded colouring of graph $G\sim\mathbb G(n,p)$ admits a rainbow Hamilton cycle. Allen, B\"ottcher and Yepremyan~\cite{ABY2023,ABY2024,Boettcher2024} announced a Dirac-type rainbow resilience result for random graphs. The quantitative range reported in~\cite{ABY2023,ABY2024} is $p\gg(\log n)/n^{1-\gamma}$ for each fixed $\gamma>0$, with colour classes of size $o(pn)$.

Deterministic rainbow embedding results are also available in quasirandom (i.e. dense pseudorandom) hosts. Ehard, Glock and Joos~\cite[Theorem~1.3]{EGJ2020} proved that, for fixed $d,\gamma>0$ and fixed integers $\Delta,\Lambda$, a sufficiently quasirandom $n$-vertex graph $G$ of density $d$ contains a rainbow copy of every $n$-vertex graph $F$ with $\Delta(F)\leq\Delta$, provided that the colouring is locally $\Lambda$-bounded and globally $(1-\gamma)e(G)/e(F)$-bounded. For $F=C_n$, this permits colour classes of size almost $e(G)/n$. Kim, K\"uhn, Kupavskii and Osthus~\cite[Theorem~1.7]{KKKO2020} obtained approximate decompositions into rainbow Hamilton cycles in quasirandom graphs.

\medskip
\noindent
\textbf{Enumeration and random sparsification.}
The existence of a spanning structure naturally leads to questions about its abundance and its survival under independent random edge-deletions (i.e. sparsification/percolation). Cuckler and Kahn~\cite{CK2009} proved that an $n$-vertex graph of minimum degree at least $d\geq n/2$ contains at least $(d/(e+o(1)))^n$ Hamilton cycles. Krivelevich, Lee and Sudakov~\cite[Theorem~1.1]{KLS2014} established the corresponding robustness statement for a fixed Dirac graph, showing that retaining each edge independently with probability at least $C\log n/n$ preserves Hamiltonicity asymptotically almost surely. These assertions concern uncoloured cycles and deterministic dense hosts.

For deterministic colourings of complete graphs, Harvey and Liaw~\cite[Theorem~14]{HL2017} obtained linearly many pairwise edge-disjoint rainbow Hamilton cycles whenever every colour appears on at most $27n/2048$ edges. This is a packing statement for a complete host; our enumeration result concerns the total number of rainbow Hamilton cycles in each eligible subgraph of a sparse pseudorandom host.

For graph transversals, Anastos and Chakraborti~\cite[Corollary~1.12]{AC2026} obtained at least $(cn)^{2n}$ Hamilton transversals in every Dirac family, for an absolute constant $c>0$ and sufficiently large $n$. A transversal includes both the cycle and its bijection to the graph indices, so this counts different objects from Hamilton cycles in one deterministically coloured graph. Their enumeration result is deduced from spread. More generally, the work of Kelly, M\"uyesser and Pokrovskiy~\cite{KMP2024} develops spread as a common strengthening of enumeration and robustness statements for spanning structures in Dirac hypergraphs. Our applications concern rainbow cycles in Dirac subgraphs of deterministic bijumbled hosts, with existence and counting extending to a sufficiently large constant degree scale. The subgraph and its bounded colouring may be selected after the host is specified; in the percolation application, they are fixed before the additional independent edge retention.

For uncoloured pseudorandom graphs, Krivelevich~\cite[Theorem~1]{Krivelevich2012Counting} proved that an $(n,d,\lambda)$-graph contains
$$
 n!(d/n)^n(1+o(1))^n
$$
Hamilton cycles, provided that $d/\lambda\geq(\log n)^{1+\xi}$ for some fixed $\xi>0$ and $\log d\cdot\log(d/\lambda)\gg\log n$. This determines the exponential scale more precisely than our lower bound, under stronger pseudorandomness and degree assumptions. Our result instead permits sufficiently large constant degree, adversarial passage to a Dirac subgraph, and bounded conflicts (see definition below).

Random sparsification of pseudorandom hosts has also been studied at the hitting-time avenue. Chen, Chen, Im and Wang~\cite[Theorem~1.2]{CCIW2026} proved that, for $(n,d,\lambda)$-graphs with $d/\lambda$ at least an absolute constant, revealing the edges in a uniformly random order produces a Hamilton cycle as soon as the minimum degree becomes two, asymptotically almost surely. Their result concerns the entire uncoloured regular host. The percolation result below concerns an arbitrary eligible Dirac subgraph with a fixed bounded colouring or conflict system.

\medskip
\noindent
{\bf Conflict-free spanning structures.}
Rainbow embedding problems belong to a broader class of problems
in which prescribed pairs of edges are forbidden from appearing
together. A subgraph is called {\em conflict-free} if it contains no
such pair; declaring every pair of distinct edges of the same
colour to form a conflict recovers rainbowness. General conflicts, however,
need not arise from a colouring and may involve incident or
disjoint edges. This viewpoint already appears in the work of
Albert, Frieze and Reed~\cite[Theorem~4]{AFR1995}, who prove that, for every
fixed $0<c<1/32$ and sufficiently large $n$, the complete graph
$K_n$ contains a conflict-free Hamilton cycle whenever each
edge belongs to at most $cn$ forbidden pairs.

An extensively studied special case is that of
\emph{incompatibility systems}, in which forbidden pairs
consist of incident edges. Such a system is {\em $b$-bounded} if,
at each vertex, every incident edge has at most $b$
incompatible partners at that vertex. Krivelevich, Lee and
Sudakov~\cite{KLS2017Dirac} proved that there is an absolute
constant $\mu>0$ such that every sufficiently large
$n$-vertex graph of minimum degree at least $n/2$ contains
a compatible Hamilton cycle under every $\mu n$-bounded
incompatibility system. They also established the corresponding
result for the entire random graph; specifically, if $p\gg\log n/n$, then,
a.a.s., $\mathbb G(n,p)$ contains a
compatible Hamilton cycle under every $\mu pn$-bounded
incompatibility system, for a suitable absolute
$\mu>0$~\cite[Theorem~1.2]{KLS2016}.
For an edge-colouring, forbidding same-colour incident pairs
enforces proper colouring of the resulting subgraph;
rainbowness additionally requires excluding same-colour pairs
of disjoint edges.

Compatibility results extend to other spanning graphs.
Cheng, Hu and Yang~\cite{CHY2025} proved that, for every
fixed integer $k\geq2$ and every $\eps>0$, there
exists $\mu>0$ such that every sufficiently large
$n$-vertex graph with minimum degree at least
$(k/(k+1)+\eps)n$ contains a compatible $k$th power
of a Hamilton cycle under every $\mu n$-bounded
incompatibility system. For Hamilton cycles themselves,
Behague, Di Braccio, Granet and Lo~\cite{BDGL2026}
recently strengthened the permissible incompatibility bound, proving that,
for every fixed $\eps>0$, sufficiently large graphs
of minimum degree at least $(1/2+\eps)n$ admit a
compatible Hamilton cycle under every $(n/8)$-bounded
incompatibility system.

General pairwise conflicts have also been studied for perfect
matchings. Coulson and Perarnau~\cite[Theorem~4]{CP2019}
proved that, for every $\eps>0$, there exists
$\mu>0$ such that every sufficiently large balanced bipartite
graph with parts of size $m$ and minimum degree at least
$(1/2+\eps)m$ contains a conflict-free perfect matching
whenever each edge conflicts with at most $\mu m$ other
edges. In our results, conflicting pairs may be disjoint.
Their switching lemma also supplies the conflict-free
perfect matchings in super-regular bipartite graphs used
by Glock and Joos~\cite[Section~5.3]{GJ2020} in the proof
of their rainbow blow-up lemma.

Our results below concern arbitrary pairwise conflicts,
including those between disjoint edges, and hold
for every eligible Dirac subgraph of each
admissible bijumbled host. Beyond finding one conflict-free Hamilton
cycle, we construct a probability measure on such cycles
that controls the probability of containing each prescribed
set of edges. This additional conclusion supports both
enumeration and survival under subsequent random
sparsification. 

\medskip
\noindent
\textbf{Spread measures.} The proofs of our main results all go through the construction of well-distributed probability measures on (relevant) Hamilton cycles. 

\begin{definition}[Edge spread]\label{def:spread}
Let $\mathcal F$ be a nonempty family of subgraphs of a fixed graph $H$. A probability measure $\nu$ on $\mathcal F$ is \emph{$q$-spread} if
$$
 \nu\bigl[\{F\in\mathcal F:S\subseteq E(F)\}\bigr]\leq q^{|S|}
 \qquad\text{for every }S\subseteq E(H).
$$
For a set of edges $S$, write $V(S)$ for its set of endpoints. When $F$ has law $\nu$, abbreviate the displayed probability to $\nu[S\subseteq E(F)]$. The condition concerns every edge set, not only single edges or pairs. The support of a measure on a finite family is the subfamily of objects assigned positive mass; it is denoted by $\operatorname{supp}(\nu)$.
\end{definition}

Spread measures connect the distribution of combinatorial structures with their survival under random sparsification. This approach grew out of the sunflower work of Alweiss, Lovett, Wu and Zhang~\cite{ALWZ2021}. Frankston, Kahn, Narayanan and Park~\cite{FKNP2021} developed it to prove the fractional expectation-threshold conjecture posited by Talagrand~\cite[Section~6]{Talagrand2010}. Their theorem converts a $q$-spread measure on sets of size at most $r$ into a sampling threshold of order $q\log r$; more precisely, when $r\to\infty$, an independent random subset of the ground set contains a member of the family a.a.s. if its sampling probability is at least $Kq\log r$ and at most one, for an absolute constant $K$~\cite[Theorem~1.6 and Remark~2.2]{FKNP2021}. Park and Pham~\cite{PP2024} subsequently resolved the stronger expectation-threshold conjecture of Kahn and Kalai~\cite{KK2007} by a different argument.

The construction of such measures has become a useful route to robust spanning-embedding theorems. Pham, Sah, Sawhney and Simkin~\cite{PSSS2022} developed this approach for perfect matchings, clique factors and bounded-degree spanning trees. Kelly, M\"uyesser and Pokrovskiy~\cite[Theorem~1.14 and Proposition~1.17]{KMP2024} obtained optimal spread for spanning structures in Dirac hypergraphs; their graph specialisation supplies $O(1/n)$-spread Hamilton cycles whenever the minimum degree is at least $(1/2+\alpha)n$, with $\alpha>0$ fixed. The results of Anastos and Chakraborti discussed above also rely on spread.

\subsection{Our results}
We start by proclaiming to have solved Problem~\ref{prob:ckpy} affirmatively; this by  establishing a corresponding deterministic result for bijumbled graphs. In our results, the two types of degree conditions, introduced through Definitions~\ref{def:relative-dirac} and~\ref{def:absolute-dirac}, are distinguished.  
Coinciding for $pn$-regular graphs, 
these two definitions need not agree for irregular hosts; the relative definition imposes no lower bound on the host degrees. Consequently, our relative degree result is accompanied with an additional minimum degree assumption, stated separately from the definition.

\medskip

Our first main result reads as follows. 

\begin{theorem}[Resilience of rainbow Hamilton cycles]\label{thm:rainbow}
For every $\eps\in(0,1/2)$, there exist constants $c,\mu,M>0$ and an integer $n_0$, depending only on $\eps$, with the following property. Let $n\geq n_0$ and $p\in(0,1]$ satisfy $pn\geq M$, and let $G$ be an $n$-vertex $(p,\beta)$-bijumbled graph with $\beta\leq cpn$. Every globally $\mu pn$-bounded colouring of every absolute $\eps$-Dirac subgraph $H$ of $G$ at density $p$ admits a rainbow Hamilton cycle.

The same conclusion holds for every relative $\eps$-Dirac subgraph $H$ of $G$, provided 
\begin{equation}\label{eq:relative-host-degree}
 \delta(G)\geq\left(1-\frac\eps4\right)pn
\end{equation}
holds.
\end{theorem}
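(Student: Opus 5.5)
The plan is to pass through the general conflict-free framework announced in the abstract. Given a globally $\mu pn$-bounded colouring of $H$, declare two distinct edges in conflict whenever they receive the same colour. Each edge then has at most $\mu pn$ conflicting partners, and a conflict-free Hamilton cycle is exactly a rainbow one. It therefore suffices to show the following: every eligible Dirac subgraph $H$ (absolute, or relative under \eqref{eq:relative-host-degree}) carries a probability measure $\nu$ on Hamilton cycles of $H$ that is $K/(pn)$-spread, with $K=K(\eps)$. I would then apply the lopsided local lemma, or a direct first-moment count, to the random cycle $F\sim\nu$ to get a conflict-free one.

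\textbf{Step 1: conflict-freeness from spread.} For each conflicting pair $\{e,f\}$, let $B_{ef}$ be the bad event $\{e,f\}\subseteq E(F)$. Spread gives $\nu[B_{ef}]\le (K/(pn))^2$. The number of conflicting pairs whose first edge is incident to a fixed vertex is at most $pn\cdot\mu pn$. Naively summing over all pairs gives roughly $n\cdot pn\cdot\mu pn\cdot K^2/(pn)^2=\mu K^2 n$, which is too large for a union bound. So I would use the lopsided local lemma in its spread-measure form, which compares $\nu[\bigcap \overline{B}]$ against a product measure. Concretely, I would build $\nu$ so that, conditional on a bounded set of edges being present, the cycle is still $O(1/(pn))$-spread. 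Then each bad event is ``negatively dependent'' on all events sharing no vertex with it. Each event meets only $O(\mu (pn)^2)$ others whose probability contributions sum to $O(\mu K^2)$, which is small once $\mu\ll 1/K^2$. This yields $\nu[\text{no } B_{ef}]\ge e^{-O(\mu K^2 n)}>0$.

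\textbf{Step 2: constructing the spread measure.} This is where the bijumbledness and the Dirac condition enter, and I expect it to be the main obstacle. I would use an absorption-type random construction in the spirit of Kelly--M\"uyesser--Pokrovskiy, adapted to sparse hosts.

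First, randomly partition $V$ into a small reservoir and a large bulk. Because $\beta\le cpn$, the Dirac degree condition is inherited: whp every vertex has $(1/2+\eps/2)$ of its degree into each part. In the relative case, \eqref{eq:relative-host-degree} converts relative degree into absolute degree $(1/2+\eps/4)pn$.

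Second, cover the bulk by a random near-perfect collection of paths. I would do this by choosing a random perfect or near-perfect matching structure through a ``random greedy with bounded choice'' process. The key invariant is that every step picks uniformly among $\Omega(pn)$ available options, which gives spread $O(1/(pn))$ edge by edge.

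Third, connect these paths and absorb the leftover vertices using the reservoir. Connectivity and expansion come from bijumbledness: small sets expand and large sets are joined by edges. The connections would use the Hamiltonicity criterion of Dragani\'c et al.\ for Dirac subgraphs of expanders, applied inside a randomised auxiliary structure so that the absorbing steps also have $\Omega(pn)$ choices.

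The delicate point is that the final absorbing/connecting stage is deterministic. It must be made to involve only a small fraction of the edges, or itself be randomised over $\Omega(pn)$ options, so that spread is not lost on any set $S$ of edges.

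\textbf{Step 3: constants.} Choose $K$ from Step 2 depending on $\eps$, then set $\mu\ll 1/K^2$, $c\ll\eps$ and $M,n_0$ large. If full local-lemma compatibility with the measure proves awkward, a fallback is to place the conflict avoidance directly into the random greedy process: forbid at each step the at most $\mu pn\cdot O(1)$ edges conflicting with already chosen ones. This costs only an $O(\mu)$ fraction of the $\Omega(pn)$ choices. Conflicts with edges chosen later would then be handled by the local lemma on the final absorbing stage only.
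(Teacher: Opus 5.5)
There is a genuine gap, and it lies in Step~1 as much as in the difficulty you already flag in Step~2.

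\textbf{Step 1 does not follow from spread.} The lopsided local lemma needs a bound on $\nu\bigl[B_{ef}\mid\bigcap_{j\in A}B_j^c\bigr]$ for \emph{arbitrary}, typically huge, families $A$ of non-neighbouring events. Conditioning on avoiding such a family is not the same as conditioning on a bounded set of edges being present, so ``conditional spread'' does not supply it. A measure on Hamilton cycles has no product structure: every vertex has degree exactly two and the cycle is connected. So there is no general reason for vertex-disjoint events to be negatively dependent. Spread alone cannot replace this. For example, the uniform measure on $r$-subsets of an $N$-set containing at least one pair of a fixed perfect matching is $(1+o(1))r/N$-spread when $N\ll r^2\le(\mu N)^2$, and it stays so after conditioning on any bounded set being present. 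Yet every member contains a conflicting pair, for a conflict graph of maximum degree one.

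\textbf{Secondary problems.}
\begin{itemize}
\item Your weight count assumes $\deg_H(v)=O(pn)$. Bijumbledness does not provide this; that is exactly why the paper regularises first.
\item The fallback greedy fails. The edges at the current vertex that conflict with \emph{some} previously chosen edge need not be $O(\mu pn)$ in number: with up to $n$ edges already chosen, all $\deg(v)$ of them may be forbidden.
\item Step~2 is only sketched. Its random-partition step needs concentration at every vertex, which fails when $pn$ is the constant $M$ that the theorem allows.
\end{itemize}

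\textbf{The missing idea: reverse the order.} Avoid conflicts in a product space first, and only then find the cycle.
\begin{itemize}
\item The paper regularises $H$, via a flow argument, to $J$ with $(1/2+\eps/2)pn\le\delta(J)\le\Delta(J)\le Dpn$ (Lemma~\ref{lem:regularised-dirac}).
\item It retains each edge of $J$ independently with probability $K/(pn)$.
\item It applies the lopsided local lemma to three families of bad events: both edges of a conflicting pair retained; low degree $\deg_R(v)<(1/2+\eps/4)K$; and connected density violations $T_{U,W}$.
\item Harris' inequality shows that joining only opposite-monotone events with overlapping supports gives a lopsided-dependency graph (Lemma~\ref{lem:monotone-lopsided}).
\item The conflict weights at a vertex total at most $D\mu a_0K^2\le 1/8$.
\item The density events are summed via spanning-tree witnesses, where $\Delta(J)\le Dpn$ cancels the $pn$ factors.
\end{itemize}
With positive probability this yields a conflict-free $(\eta,\rho,K/n)$-sparse graph of minimum degree at least $(1/2+\eps/4)K$. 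Such a graph is Hamiltonian by Theorem~\ref{thm:imported-hamilton}, even for constant $K$, and any Hamilton cycle inside it is automatically conflict-free, hence rainbow. For existence this positivity alone suffices. The spread bound, via the additional-event local lemma, is needed only for counting and percolation.
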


The bound imposed on the bijumbledness parameter in Theorem~\ref{thm:rainbow} does force a lower bound on the density scale $pn$, but not on the minimum degree. Indeed, given $0<c\leq1/2$ and applying bijumbledness to $V(G)$ with itself, provides a lower bound on the average degree seen through
$$
 \frac{2e(G)}n\geq pn-\beta\geq(1-c)pn.
$$
Picking a vertex $v$ of degree $k$ that is at least the average degree, then owing to $e_G(\{v\},N_G(v)) = k$ as well as $|\{v\}| |N_G(v)| = k$, bijumbledness delivers 
$$
 (1-p)k\leq\beta\sqrt{k}.
$$
As $k>0$, we may write that $\beta^2\geq(1-p)^2k$; combining this with $\beta\leq cpn$ and $k\geq(1-c)pn$, and then dividing by $c^2pn$, yields
$$
 pn\geq\frac{(1-p)^2(1-c)}{c^2}.
$$
For $p\leq1/2$, this implies $pn\geq1/(8c^2)$. For $p>1/2$, one has $pn>n/2$. Consequently, any fixed lower bound $pn\geq M$ can be ensured by decreasing $c$ and increasing $n_0$. 

No comparable lower bound on the minimum degree is imposed through the bound imposed on the bijumbledness parameter. Indeed, a complete graphs on $n-1$ vertices together with an isolated vertex is $(1,1+2\sqrt n)$-bijumbled. 
% To verify this, let $v$ be the isolated vertex and let $U,W$ have sizes $a,b>0$. The missing ordered pairs consist of diagonal pairs and pairs incident with $v$, so
% $$
%  0\leq ab-e_G(U,W)
%  \leq |U\cap W|+\ind_{\{v\in U\}}b+\ind_{\{v\in W\}}a
%  \leq(1+2\sqrt n)\sqrt{ab}.
% $$
% Here, $\ind$ denotes an indicator, $|U\cap W|\leq\sqrt{ab}$, and, when the relevant indicator is one, $b/\sqrt{ab}\leq\sqrt n$ or $a/\sqrt{ab}\leq\sqrt n$. Empty sets satisfy the discrepancy inequality trivially. Thus, $\beta\leq cpn$ holds for this graph for every fixed $c>0$ once $n$ is sufficiently large, even though its minimum degree is zero. 
The graph $G$ itself satisfies the relative degree condition, but is not Hamiltonian. This explains the need to impose~\eqref{eq:relative-host-degree} in the relative degree branch, so to speak, of Theorem~\ref{thm:rainbow}.

\medskip

The next result is a corollary of Theorem~\ref{thm:rainbow} resolving the aforementioned random-graph question put forth by of Coulson, Keevash, Perarnau and Yepremyan~\cite{CKPY2020}. 

% Indeed, in the logarithmic degree range, a random host satisfies both the required bijumbledness bound and~\eqref{eq:relative-host-degree} imposed in Theorem~\ref{thm:rainbow} asymptotically almost surely. 

\begin{corollary}[Resilience of rainbow Hamilton cycles in random graphs]\label{cor:random-rainbow}
For every $\eps\in(0,1/2)$, there exist constants $C,\mu>0$, depending only on $\eps$, such that the following holds whenever $C\log n/n\leq p\coloneqq p(n)\leq1$. A.a.s., every globally $\mu pn$-bounded colouring of every relative $\eps$-Dirac subgraph of $G\sim\mathbb G(n,p)$ contains a rainbow Hamilton cycle.

On the same asymptotically almost sure event, the same conclusion holds for every globally $\mu pn$-bounded colouring of every absolute $\eps$-Dirac subgraph of $G$ at density $p$.
\end{corollary}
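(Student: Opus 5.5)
The corollary follows from Theorem~\ref{thm:rainbow} once we check its hypotheses for $G\sim\mathbb G(n,p)$ with $p\geq C\log n/n$. Fix $\eps$ and take the constants $c,\mu,M,n_0$ from Theorem~\ref{thm:rainbow}. We will exhibit one a.a.s.\ event $\mathcal E$ on which two things hold. First, $G$ is $(p,\beta)$-bijumbled with $\beta\leq cpn$. Second, $\delta(G)\geq(1-\eps/4)pn$, which is \eqref{eq:relative-host-degree}. Since the choice of $H$ and of the colouring is made after $G$ is revealed, and Theorem~\ref{thm:rainbow} is deterministic, both branches of the theorem then apply on $\mathcal E$ simultaneously. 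This gives the relative conclusion and, on the same event, the absolute one. Note also that $pn\geq C\log n\geq M$ for $n$ large, so the lower bound on $pn$ is automatic.

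\emph{Degree condition.} For each vertex $v$, $\deg_G(v)\sim\mathrm{Bin}(n-1,p)$. The Chernoff bound gives
\[
\PP\bigl[\deg_G(v)<(1-\eps/4)pn\bigr]\leq\exp\bigl(-\eps^2pn/40\bigr)\leq n^{-\eps^2C/40}.
\]
The small shift from $n-1$ to $n$ is absorbed by slightly weakening the constant. Taking $C\geq 80/\eps^2$ and a union bound over the $n$ vertices shows that \eqref{eq:relative-host-degree} holds a.a.s.

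\emph{Bijumbledness.} This is the only genuinely nontrivial input. The standard fact is that $\mathbb G(n,p)$ with $pn\geq C\log n$ is a.a.s.\ $(p,K\sqrt{pn})$-bijumbled for an absolute constant $K$. We take this as known: it follows from the spectral bound $\max\{|\lambda_2|,|\lambda_n|\}=O(\sqrt{pn})$ for $\mathbb G(n,p)$ at this density (Feige--Ofek, or Vu / Benaych-Georges--Bordenave--Knowles type results) combined with an expander-mixing argument that is adapted to the non-regular case by using the centred matrix $A-p(J-I)$, together with concentration of the degrees.

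Alternatively, one can prove it directly. For $|A|,|B|$ large, apply Chernoff with a union bound over the $\binom{n}{a}\binom{n}{b}$ choices of sets. For small sets, use the a.a.s.\ bound $e(A,B)\leq O(\sqrt{pn}\sqrt{|A||B|})$ that comes from the degree bounds and the standard small-set edge counts. Either way, $\beta=K\sqrt{pn}\leq cpn$ as soon as $pn\geq (K/c)^2$, which holds for large $n$ because $pn\geq C\log n$.

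The main obstacle is precisely this bijumbledness estimate at the sparse end $p\approx C\log n/n$. There, naive union bounds over all pairs $(A,B)$ fail for small, unbalanced sets, so we will rely on the cited spectral result rather than reprove it. With $\mathcal E$ defined as the intersection of the two events, $\PP[\mathcal E]\to1$, and Theorem~\ref{thm:rainbow} finishes the proof.
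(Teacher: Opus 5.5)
Your proposal is correct and follows the same route as the paper. The paper packages exactly your event $\mathcal E$ as Lemma~\ref{lem:random-host}: the host is $(p,cpn)$-bijumbled and $\delta(G)\geq(1-\eps/4)pn$, with probability at least $1-3n^{-3}$ once $C\geq\max\{40c^{-2},1024\eps^{-2}\}$. It then applies the deterministic Theorem~\ref{thm:rainbow} to every $H$ and colouring chosen afterwards, which is your simultaneity observation.

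The one real difference is how bijumbledness is obtained. You import the sharp Feige--Ofek-type bound $\|A-\EE[A]\|=O(\sqrt{pn})$. The paper instead proves a weaker, self-contained estimate with the matrix Bernstein inequality. The summands $(X_{ij}-p)(\mathbf e_i\mathbf e_j^{\mathsf T}+\mathbf e_j\mathbf e_i^{\mathsf T})$ have norm at most one and total variance at most $pn$. Hence $\|A-\EE[A]\|\leq cpn/2$ fails with probability at most $2n\exp(-c^2pn/10)\leq 2n^{-3}$. The zero diagonal costs only an extra $p|U\cap W|\leq p\sqrt{|U||W|}$.

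This loses a $\sqrt{\log n}$ factor relative to your bound. However, Theorem~\ref{thm:rainbow} only needs $\beta\leq cpn$, and $C$ may depend on $c=c(\eps)$, so the loss is absorbed by $pn\geq C\log n$. Consequently, the ``main obstacle'' you flag at $p\approx C\log n/n$ is an obstacle only for the $O(\sqrt{pn})$ target you set yourself, not for what the theorem requires. Your route gives a stronger host property at the price of a deeper external citation, whose hypotheses on $C$ you would need to match. The paper's route is elementary and gives explicit constants.
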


\noindent
More generally, Lemma~\ref{lem:random-host} establishes 
properties of random graphs with which all of our results for bijumbled graphs can be transferred to random hosts. 

% For the absolute assertion, Theorem~\ref{thm:rainbow} applies directly; for the relative assertion, the extra hypothesis~\eqref{eq:relative-host-degree} is supplied by degree concentration of the random host and forms no additional assumption on the chosen subgraph. Lemma~\ref{lem:random-host} establishes 
% properties of random graphs with which 
% Corollary~\ref{cor:random-rainbow} is seen to follow from Theorem~\ref{thm:rainbow}; the same applies to all proclaimed random graph results. 

\medskip

The existence theorem, namely Theorem~\ref{thm:rainbow}, has two quantitative companions. The first concerns the number of distinct rainbow Hamilton cycles emerge under the same hypotheses seen in that theorem; this result reads as follows. 

\begin{proposition}[Counting rainbow Hamilton cycles]\label{prop:rainbow-counting}
For every $\eps\in(0,1/2)$, there exist constants $c,\mu,a,M>0$ and an integer $n_0$, depending only on $\eps$, with the following property. If $n\geq n_0$, $p\in(0,1]$, $pn\geq M$, and $G$ is an $n$-vertex $(p,\beta)$-bijumbled graph with $\beta\leq cpn$, then every globally $\mu pn$-bounded colouring of every absolute $\eps$-Dirac subgraph $H$ of $G$ at density $p$ admits at least $(apn)^n$ distinct rainbow Hamilton cycles. The same bound holds for every relative $\eps$-Dirac subgraph $H$ of $G$ whenever~\eqref{eq:relative-host-degree} also holds. The constants may be chosen so that $aM\geq2$ leading to exponentially many such cycles.
\end{proposition}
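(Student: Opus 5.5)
The plan is to deduce the count from the spread measure that the abstract announces. The main technical result, proved later, should supply an $O(1/(pn))$-spread probability measure $\nu$ on conflict-free (here: rainbow) Hamilton cycles of $H$. Concretely, I would rely on the following: for every $\eps$ there are $c,\mu,M,n_0$ and $K=K(\eps)$ such that, under the hypotheses of Proposition~\ref{prop:rainbow-counting} (in either the absolute branch, or the relative branch together with~\eqref{eq:relative-host-degree}), there is a measure $\nu$ on rainbow Hamilton cycles of $H$ with
$$
\nu[S\subseteq E(F)]\leq\Bigl(\frac{K}{pn}\Bigr)^{|S|}\qquad\text{for every } S\subseteq E(H).
$$
Its construction is where all the real work lies: it presumably combines the uncoloured Hamiltonicity machinery in bijumbled hosts with a lopsided local lemma to control conflicts. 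The counting statement itself is a short consequence.

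Given $\nu$, the count follows by a standard min-entropy argument. Take $S=E(F_0)$ for any single cycle $F_0$ in the support. Since $|E(F_0)|=n$,
$$
\nu(\{F_0\})\leq\nu[E(F_0)\subseteq E(F)]\leq\Bigl(\frac{K}{pn}\Bigr)^{n}.
$$
Because $\nu$ is a probability measure, the support must therefore contain at least $(pn/K)^n$ distinct cycles. So we may take $a=1/K$, and every cycle in the support is rainbow by construction.

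For the last sentence of the proposition, the constants can be adjusted freely. Raising $M$ (and correspondingly shrinking $c$ and enlarging $n_0$, as discussed after Theorem~\ref{thm:rainbow}) only strengthens the hypotheses, so we may take $M\geq 2K$. This gives $aM\geq 2$ and hence $(apn)^n\geq 2^n$. Since the rainbow case is the special case of a conflict system in which each edge has at most $\mu pn$ same-colour partners, the count transfers verbatim.

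The main obstacle is the existence of the spread measure with the correct $O(1/(pn))$ scale in sparse bijumbled hosts. The deduction above is only a few lines once that measure is in hand.
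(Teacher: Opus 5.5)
Your proposal is correct and follows the paper's route: the paper takes the $L/(pn)$-spread measure of Theorem~\ref{thm:main} (with $L=16K/9$), bounds the mass of each cycle $Q_0$ in its support by $(L/(pn))^n$ by applying spread to $S=E(Q_0)$, sums over the support to get at least $(pn/L)^n$ conflict-free Hamilton cycles (rainbow under the same-colour conflict graph), and sets $a=1/L$. The condition $aM\geq 2$ is secured as you do, by building $M\geq 2L$ into the choice of constants; raising $M$ alone suffices, so your adjustment of $c$ and $n_0$ is unnecessary.
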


Thus, exponentially many rainbow Hamilton cycles are guaranteed even when $pn$ is fixed. The lower bound has the correct exponential scale up to a constant factor in the base. Indeed, bijumbledness applied with $V(G)$ with itself implies $2e(G)\leq(1+c)pn^2$. Fixing one vertex and one orientation for each Hamilton cycle maps the cycles injectively to choices of a successor at every vertex. Their number is therefore at most
$$
 \prod_{v\in V(H)}\deg_H(v)
 \leq\left(\frac{2e(H)}n\right)^n
 \leq\bigl((1+c)pn\bigr)^n,
$$
where the first inequality relies on the AM-GM inequality; note, crucially, that an upper bound of order $pn$ on the maximum degree is not imposed here.

\medskip

The second companion of Theorem~\ref{thm:rainbow} concerns an additional independent percolation of a coloured Dirac subgraph. For a graph $H$ and $t\in[0,1]$, write $H_t$ to denote the random spanning subgraph of $H$ obtained by retaining every edge independently with probability $t$; we refer to $H_t$ as the {\em percolation of $H$ at rate $t$}. Edge colourings of $H$ are then restricted to the edges retained. Our next result reads as follows. 

\begin{proposition}[Rainbow Hamilton cycles after percolation]\label{prop:rainbow-percolation}
For every $\eps\in(0,1/2)$, there exist constants $c,\mu,D>0$, an integer $n_0$, as well as a (deterministic) sequence $\xi_n\to0$, depending only on $\eps$, with the following property. Let $n\geq n_0$, $p\in(0,1]$, and $pn\geq D\log n$. Suppose that $G$ is an $n$-vertex $(p,\beta)$-bijumbled graph with $\beta\leq cpn$. Let $H$ be an absolute $\eps$-Dirac subgraph of $G$ at density $p$, or a relative $\eps$-Dirac subgraph of $G$ whose host also satisfies~\eqref{eq:relative-host-degree}. For every globally $\mu pn$-bounded colouring $\varphi$ of $H$, and every \mbox{$D\log n/(pn)\leq t\leq1$},
\begin{equation}\label{eq:rainbow-percolation}
 \PP[H_t\text{ contains a }\varphi\text{-rainbow Hamilton cycle}]
 \geq1-\xi_n.
\end{equation}
The probability is over the independent edge retention, with $G,H$ and $\varphi$ fixed.
\end{proposition}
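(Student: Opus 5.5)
The plan is to derive Proposition~\ref{prop:rainbow-percolation} from the spread measure announced in the abstract, combined with the fractional expectation-threshold theorem of Frankston, Kahn, Narayanan and Park~\cite[Theorem~1.6 and Remark~2.2]{FKNP2021}. Concretely, I will take as given (it is established later in the paper and drives Theorem~\ref{thm:rainbow}) the following statement. Under the hypotheses of the proposition, with constants $c,\mu$ depending only on $\eps$, there is a $K_0/(pn)$-spread probability measure $\nu$ on the family $\mathcal F_\varphi$ of $\varphi$-rainbow Hamilton cycles of $H$. Here $K_0=K_0(\eps)$, and spread is meant in the sense of Definition~\ref{def:spread} with ground set $E(H)$. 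The spread only needs to hold for the conflict system in which every two same-coloured edges conflict; a globally $\mu pn$-bounded colouring gives each edge at most $\mu pn$ conflicting partners.

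The key steps are as follows.
\begin{enumerate}
\item Regard $\mathcal F_\varphi$ as a family of $n$-element subsets of the ground set $E(H)$. Then $H_t$ is exactly the binomial random subset of $E(H)$ at rate $t$. Moreover, $H_t$ contains a $\varphi$-rainbow Hamilton cycle if and only if $E(H_t)$ contains some member of $\mathcal F_\varphi$. This is because rainbowness of a fixed cycle is unaffected by deleting edges outside it.
\item The measure $\nu$ is $q$-spread with $q=K_0/(pn)$ on sets of size $r=n$. The FKNP theorem then guarantees a sampling probability threshold of order $Kq\log r=KK_0\log n/(pn)$. Choosing $D\ge KK_0$, every $t$ with $D\log n/(pn)\le t\le1$ lies above this threshold. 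Monotonicity of the containment event in $t$ handles the whole range at once.
\item Extract an explicit deterministic $\xi_n$. The quantitative form of the theorem gives failure probability tending to $0$ as $r=n\to\infty$, uniformly over families that are $q$-spread with $qr$-bounded data. Since $r=n$ and the constants depend only on $\eps$, the resulting $\xi_n$ depends only on $n$ and $\eps$. In particular, it does not depend on $G$, $H$, $\varphi$ or $t$.
\item Check that the hypotheses of the spread construction hold. The condition $pn\ge D\log n$ certainly implies $pn\ge M$ for large $n$. The bijumbledness and Dirac conditions are exactly those of Theorem~\ref{thm:rainbow}, including \eqref{eq:relative-host-degree} in the relative case.
\end{enumerate}

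The one delicate bookkeeping point is the form of the FKNP statement. It is usually phrased as ``$\mathcal F$ is $q$-spread implies $p_c(\langle\mathcal F\rangle)\le Kq\log r$'' with failure probability $o_r(1)$. I will use the version in \cite[Remark~2.2]{FKNP2021}, where the failure probability at rate $Kq\log r$ is bounded by an explicit function of $r$ alone. Otherwise one needs the slightly stronger bound at rate $L q\log r$ with failure probability at most $C/L$. In that version I would take $D$ slightly larger and let $L$ grow, but since $D$ is fixed, the cleaner route is the explicit $r$-dependence.

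The genuine obstacle is not this deduction but the input: constructing an $O(1/(pn))$-spread measure on rainbow Hamilton cycles in a sparse Dirac subgraph of a bijumbled host. Here I would follow the paper's outline. First, build a spread measure on uncoloured Hamilton cycles, or on cycle-like structures via random partitioning and absorption, in the spirit of \cite{KMP2024,PSSS2022}. Then reweight or condition on being conflict-free using the lopsided local lemma. The bound $\mu pn$ on conflicts per edge, set against spread $O(1/(pn))$, keeps the expected number of violated conflicts per edge a small constant. This lets the conditioning inflate the spread by at most a constant factor per edge. For the present proposition, however, that measure is assumed, and the proof reduces to the steps above.
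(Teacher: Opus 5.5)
Your proposal is correct and follows essentially the paper's argument. The paper applies the $L/(pn)$-spread measure of Theorem~\ref{thm:main} to the conflict graph $\Gamma_\varphi$ that joins equally coloured edges. It combines this with the binomial FKNP lemma (Lemma~\ref{lem:spread-threshold-binomial}) using $r=n$, $q=L/(pn)$ and $D=BL$, and enlarges $n_0$ so that $D\log n\geq M$. Your sketch of how the spread measure itself would be built (conditioning a measure on Hamilton cycles) differs from the paper's construction, which conditions a rate-$K/(pn)$ percolation of a degree-regularised subgraph via the lopsided local lemma and then extracts a cycle using Theorem~\ref{thm:imported-hamilton}; this does not affect the present deduction.
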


The error bound in~\eqref{eq:rainbow-percolation} is uniform over all the indicated choices. The subgraph and its colouring may be chosen after the host is specified, but are fixed prior to percolation. The assertion concerns each such choice individually; it does not assert simultaneous success for all choices under one common percolation.

In our results, it is only the percolation result, namely Proposition~\ref{prop:rainbow-percolation}, that requires the degree scale $pn$ to grow logarithmically with $n$ so that the percolation rate $D\log n/(pn)$ does not exceed one. 
More substantively, percolations with vanishing rates of graphs with vertices having constant degrees a.a.s. results in percolated graphs containing isolated vertices and hence not Hamiltonian. 

\medskip

The common source of the counting and percolation results is an optimally spread probability measure on the rainbow Hamilton cycles. For counting, we bound the mass of an individual cycle; for percolation, we apply the spread-to-threshold theorem of Frankston, Kahn, Narayanan and Park~\cite{FKNP2021}. The next observation explains why the scale $1/(pn)$ is the appropriate spread parameter in our setting. 

\begin{observation}[Lower bound on the spread parameter]\label{obs:spread-optimality}
If an $n$-vertex graph $H$ admits a $q$-spread probability measure on a nonempty family of its Hamilton cycles, then $q\geq n/e(H)$. In particular, if $H\subseteq G$, where $G$ is $(p,\beta)$-bijumbled and $\beta\leq cpn$, then
$$
 q\geq\frac{2}{(1+c)pn}.
$$
\end{observation}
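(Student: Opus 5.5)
The plan is a first-moment (double counting) argument over single edges, followed by a bijumbledness estimate on $e(H)$. Let $\nu$ be a $q$-spread probability measure on a nonempty family $\mathcal F$ of Hamilton cycles of $H$, and let $F$ have law $\nu$. Every Hamilton cycle of an $n$-vertex graph has exactly $n$ edges (for $n\geq3$). Hence, summing indicators over edges,
$$
 n=\EE\bigl[|E(F)|\bigr]=\sum_{e\in E(H)}\nu[e\in E(F)].
$$
Applying the spread condition of Definition~\ref{def:spread} to each singleton $S=\{e\}$ gives $\nu[e\in E(F)]\leq q$. Therefore
$$
 n\leq q\,e(H),
$$
that is, $q\geq n/e(H)$. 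Since $\mathcal F$ is nonempty, $H$ has a Hamilton cycle, so $e(H)\geq n>0$ and the division is legitimate.

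For the second assertion, I would bound $e(H)\leq e(G)$ using bijumbledness applied to the pair $A=B=V(G)$. Under the paper's convention, $e_G(V,V)=2e(G)$, so
$$
 2e(G)\leq pn^2+\beta n\leq pn^2+cpn\cdot n=(1+c)pn^2.
$$
This gives $e(H)\leq(1+c)pn^2/2$. Substituting into $q\geq n/e(H)$ yields
$$
 q\geq\frac{2n}{(1+c)pn^2}=\frac{2}{(1+c)pn},
$$
which is the claimed bound.

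There is no real obstacle here. The only points that need care are using the ordered-pair convention for $e_G(A,B)$, which accounts for the factor of $2$, and observing that only singleton sets $S$ are used from the spread condition.
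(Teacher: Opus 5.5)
Your proposal is correct and follows essentially the same route as the paper. Both arguments bound each singleton's probability by $q$ to get $n=\EE[|E(Q)|]\leq q\,e(H)$, then apply bijumbledness to $V(G)$ with itself to obtain $2e(H)\leq 2e(G)\leq pn^2+\beta n\leq(1+c)pn^2$.
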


\begin{proof}
Let $Q$ be a Hamilton cycle, sampled according to the given $q$-spread measure. Every Hamilton cycle has $n$ edges, and the spread inequality for a singleton bounds the probability of using any particular edge by $q$; hence,
$$
 n=\EE[|E(Q)|]
  =\sum_{e\in E(H)}\PP[e\in E(Q)]
  \leq q e(H).
$$
For the second assertion, bijumbledness on $V(G)$ with itself yields
$$
2e(H)\leq2e(G)\leq pn^2+\beta n\leq(1+c)pn^2.
$$
Substituting this bound into the first assertion proves the claim.
\end{proof}

\subsection{Methodology}
All of our results hinge on the construction of a spread measure on Hamilton cycles distinguished by avoiding prescribed pairwise edge conflicts. The nonempty support of said measure yields existence, its bounds on individual masses yield counting, and the spread-to-threshold theorem supplies survival under independent edge retention. We open this section with a high-level description of this construction and then state the broader results arising from it and from which our aforementioned results are deduced. 

\medskip

Fix a bijumbled host $G$ and an adequately coloured Dirac subgraph $H$ of which. The construction proceeds in four steps.
\begin{enumerate}
\item \emph{Degree regularisation.}
Under either Dirac subgraph alternative, we first establish that $\delta(H)\geq(1/2+3\eps/4)pn$ holds. For the relative alternative, this uses the separate host degree bound imposed in~\eqref{eq:relative-host-degree} whilst for the absolute alternative, it follows directly from the definition. Using an integer flow argument, choose an essentially degree-regular spanning subgraph $J\subseteq H$ satisfying 
$$
(1/2+\eps/2)pn \leq \delta(J) \leq \Delta(J) \leq D_0 pn,
$$
where $D_0 \coloneqq D_0(\eps)$; this essentially degree-regular $J$ also forms a Dirac subgraph. The upper bound on $\Delta(J)$ facilitates later on in 
establishing a certain union-bound argument needed for an application of the local lemma. 

\item \emph{Percolation.}
Let $R\subseteq J$ be a percolation of $J$ with rate $\theta=K/(pn)$, where $K$ is a sufficiently large constant. The expected degrees of $R$ are then bounded above and below by constant multiples of $K$.

\item \emph{Conditioning.}
Condition on the absence of (colour) conflicts, low degrees, and certain connected density violations (introduced later on). An appeal to the lopsided local lemma~\cite{ES1991} asserts that this event has positive probability. Let $F$ have the resulting conditional law; its edges are no longer independent. Every graph in the support of this distribution satisfies structural conditions that force Hamiltonicity, and the conditional law retains spread of order $1/(pn)$.

\item \emph{Selecting a Hamilton cycle.}
For each realisation $f$ of $F$, select a Hamilton cycle $Q(f)\subseteq f$ according to some fixed deterministic rule. The random cycle $Q(F)$ is rainbow, and its law inherits the spread bound; i.e.
$$
 \PP[S\subseteq E(Q(F))]\leq\PP[S\subseteq E(F)]
$$
holds, whenever $S\subseteq E(H)$.
\end{enumerate}
Throughout this process, the host is deterministic; the only randomness used to obtain the distribution comes from the auxiliary percolation experiment and its subsequent conditioning.

\medskip
Given a graph $H$, and in our case a Dirac subgraph of $G$, a \emph{conflict graph $\Gamma$ over $H$} is a graph satisfying $V(\Gamma) = E(H)$ and whose edges specify which pairs of edges of $H$ are {\em conflicted} and consequently are  
are forbidden from appearing together is a desired configuration. 
A subgraph of $H$ is said to be $\Gamma$-\emph{conflict-free} provided that its edges form an independent set in $\Gamma$. 

Rainbowness with respect to a given edge-colouring, can be encoded through a conflict graph by joining any two distinct edges sharing the same colour. If a colour class has size $b$, each of its edges has conflict degree $b-1$; thus, global $\mu pn$-boundedness implies $\Delta(\Gamma)\leq\mu pn$.

The spread measure construction, illustrated briefly above, accommodates this more general conflict formulation without changing the degree or bijumbledness scales. It yields the following strengthening of the rainbow existence theorem, namely Theorem~\ref{thm:rainbow}.

\begin{theorem}[Conflict-free Hamilton cycles and optimal spread]\label{thm:main}
For every $\eps\in(0,1/2)$, there exist constants $c,\mu,L,M>0$ and an integer $n_0$, depending only on $\eps$, with the following property. Let $n\geq n_0$, $p\in(0,1]$, and $pn\geq M$ and suppose that $G$ is an $n$-vertex $(p,\beta)$-bijumbled graph with $\beta\leq cpn$, and that $H$ is an absolute $\eps$-Dirac subgraph of $G$ at density $p$. Alternatively, let $H$ be a relative $\eps$-Dirac subgraph of $G$ and assume~\eqref{eq:relative-host-degree}. Under either alternative, for every conflict graph $\Gamma$ on $E(H)$ with $\Delta(\Gamma)\leq\mu pn$, there is an $L/(pn)$-spread probability measure on the $\Gamma$-conflict-free Hamilton cycles of $H$.

In particular, every globally $\mu pn$-bounded colouring of $H$ admits such a measure on its rainbow Hamilton cycles. The constants may be chosen so that $M\geq\max\{2L,2/\mu\}$.
\end{theorem}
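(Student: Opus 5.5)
We follow the four-step scheme outlined in the Methodology. First, \emph{degree regularisation}. In the relative alternative, \eqref{eq:relative-host-degree} gives $\deg_H(v)\geq(1/2+\eps)(1-\eps/4)pn\geq(1/2+3\eps/4)pn$. In the absolute alternative this bound is immediate. We then extract $J\subseteq H$ with $(1/2+\eps/2)pn\leq\delta(J)\leq\Delta(J)\leq D_0pn$ by an integer flow (or $b$-factor) argument. In the auxiliary network, each vertex demands about $(1/2+\eps/2)pn$ units and has capacity $D_0pn$. The Max-flow/min-cut condition is checked using bijumbledness: a cut $(A,B)$ obstructing the flow would force $e_G(A,B)$ to deviate from $p|A||B|$ by more than $\beta\sqrt{|A||B|}$. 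Small sets are controlled by the $(1/2+3\eps/4)pn$ minimum degree together with $e_G(A,A)\leq p|A|^2+\beta|A|$.

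Second, \emph{percolation and conditioning}. Let $R=J_\theta$ with $\theta=K/(pn)$. We define a family of bad events, each determined by a bounded edge set:
\begin{itemize}
\item $B_{ee'}$, that both edges of a conflict pair $ee'\in E(\Gamma)$ survive;
\item $B_v$, that $\deg_R(v)<(1/2+\eps/4)\theta\deg_J(v)$;
\item $B_U$, that a connected set $U$ of size $k\leq\eta n$ spans more than $\lambda k$ edges of $R$ (a ``connected density violation''; connectivity keeps the number of such sets near $v$ at about $(eD_0K)^k$).
\end{itemize}
Each event is monotone increasing or decreasing, so we apply the lopsided local lemma in the Erd\H{o}s--Spencer form. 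The probabilities are bounded as follows:
\begin{itemize}
\item $\PP[B_{ee'}]=\theta^2$, and each edge lies in at most $\mu pn$ such pairs, giving a total weight $\mu pn\theta^2=\mu K^2/(pn)$ per edge; this is small once $\mu\ll1/K^2$ relative to the assigned weights $x_{ee'}\approx 2\theta^2$.
\item $\PP[B_v]\leq e^{-\Omega(\eps^2K)}$ by Chernoff.
\item $\PP[B_U]\leq\binom{e_J(U)}{\lambda k}\theta^{\lambda k}$, where bijumbledness gives $e_J(U)\leq pk^2+\beta k$, and the union over connected $U$ is summable for large $\lambda$.
\end{itemize}
Let $F$ be $R$ conditioned on avoiding all bad events. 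The local lemma (in its conditional form) gives, for each edge set $S$,
\[
\PP[S\subseteq E(F)]\leq\PP[S\subseteq E(R)]\prod(1-x_A)^{-1},
\]
where the product runs over events $A$ depending on $S$. Since $\theta^{|S|}=(K/pn)^{|S|}$, it suffices to show that this product is at most $C^{|S|}$. Each edge is involved in $O(\mu pn)$ conflict events of weight $O(\theta^2)$, in $O(1)$ vertex events, and in connected-set events of geometrically summable weight. Hence the product is at most $\exp(O(|S|))$, and $L=Ke^{O(1)}$.

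Third, \emph{Hamiltonicity of every $f\in\operatorname{supp}(F)$}. Every such $f$ is conflict-free and has minimum degree at least $(1/2+\eps/4)\theta\deg_J(v)\geq(1/4)K$. It also satisfies a sparseness condition on small sets. For large sets, bijumbledness of $G$ together with the Dirac property of $J$ must be transferred to $f$; this is done by additionally including in the local lemma the events that $e_R(A,B)$ is too small for large $A,B$. These events are not local, and this is the main obstacle. To handle it, I would instead show that these properties hold for all outcomes by a deterministic argument: the degree lower bound holds relative to $J$-neighbourhoods, and expansion of small sets follows from the local sparseness plus minimum degree. For large sets, I would restrict attention to sets that are unions of neighbourhoods, or impose a per-vertex event ``$v$ has at least $(1/2+\eps/4)\theta\,|N_J(v)\cap W|$ neighbours in $W$'' only for $W$ from a fixed bounded family of random partitions. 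The goal is to meet the hypotheses of the expander Hamiltonicity criterion of Dragani\'c et al.\ \cite{DKLMPS2025} in its resilience form, namely degrees $(1/2+\eps')d$ inside a pseudorandom sparse graph. That yields Hamiltonicity of $f$.

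Fourth, choose a Hamilton cycle $Q(f)\subseteq f$ deterministically. Then $Q(F)$ is $\Gamma$-conflict-free and satisfies $\PP[S\subseteq E(Q(F))]\leq(L/pn)^{|S|}$. Rainbowness is the special case $\Delta(\Gamma)\leq\mu pn$ noted above. Finally, take $M$ large so that $M\geq\max\{2L,2/\mu\}$ and $\theta\leq1$.
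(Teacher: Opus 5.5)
Your proposal has a genuine gap at the third step, and that step decides the theorem. You never show that every $f$ in the support is Hamiltonian. The obstacle you name, transferring lower bounds on $e(A,B)$ for large sets to $f$, is left with only tentative workarounds.

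The missing idea is that no large-set lower bounds are needed at all. The general criterion of Dragani\'c et al.\ (their Theorem~1.4, which the paper imports) asks only for two things: $\delta(f)\geq(1/2+\gamma)K$, and the one-sided bound $e_f(U,W)\leq(1+\rho)\eta K|U|$ whenever $\eta K\leq|U|=|W|\leq\eta n$, with $\eta,\rho$ small. Their $(n,d,\lambda)$ resilience statement would not apply directly anyway, since a constant-degree percolated host is neither regular nor spectrally pseudorandom. In the criterion, the surplus of the minimum degree over $K/2$, combined with a \emph{tight} upper density on small sets, replaces pseudorandomness on large sets. Both requirements are local enough to be imposed by the conditioning.

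Your bad events, as set up, cannot deliver this criterion, for four concrete reasons.
\begin{enumerate}
\item The threshold $(1/2+\eps/4)\theta\deg_J(v)$ gives only $\delta(f)\gtrsim K/4$. The density scale of $f$, inherited from $e_J(U,W)\leq p|U||W|+cpn\sqrt{|U||W|}$, is $K$, so you have discarded the Dirac surplus. The threshold must be absolute, $(1/2+\eps/4)K$, which still lies $\eps K/4$ below the mean $\theta\deg_J(v)\geq(1/2+\eps/2)K$.
\item At $|U|=\eta n$ the mean of $e_R(U,U)$ can already be about $\eta K|U|$, so the violation threshold must sit within a factor $1+\rho$ of the mean. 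A ``large $\lambda$'' with a binomial first-moment bound cannot achieve this. You need a Chernoff bound at relative deviation of order $\rho$, which yields $\exp(-\Omega(\rho^2\eta Kz))$ and forces $K$ to be large in terms of $\rho$, $\eta$ and $D_0$.
\item Single-set events $e_R(U)>\lambda|U|$ do not control $e_f(U,W)$ for disjoint $U,W$ within the required constant. The paper instead uses ordered pairs $(U,W)$ with threshold $(1+\rho)\eta K(|U|+|W|)/2$. This threshold is additive over the components of $R[U\cup W]$, so one may restrict to connected violations.
\item Connectivity must be built into the event and must hold in $R$, not in $J$. Your count $(eD_0K)^k$ is not a count of fixed events: $J$ has about $(eD_0pn)^{k-1}$ connected $k$-sets through $v$. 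The dependence on $pn$ disappears only after charging $\theta^{k-1}$ for a retained spanning tree of $J[U\cup W]$ and bounding the remaining excess conditionally on that tree.
\end{enumerate}

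With these repairs, your local-lemma and spread bookkeeping can be made to work. The paper streamlines it by joining only events of opposite monotonicity, so the spread bound charges only the degree events at $V(S)$ and gives exactly the factor $(4/3)^{|V(S)|}$.
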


The same spread measure supplies the conflict-free versions of the counting as well as the percolation results. 

\begin{corollary}[Counting under prescribed conflicts]\label{cor:conflict-counting}
Fix $\eps\in(0,1/2)$ and let $c,\mu,L,M,n_0$ be as in Theorem~\ref{thm:main}. Under either the absolute or the relative Dirac alternative of that theorem, every $G,H,\Gamma$ satisfying its hypotheses admit at least $(pn/L)^n$ distinct $\Gamma$-conflict-free Hamilton cycles in $H$.
\end{corollary}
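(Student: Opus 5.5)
Apply Theorem~\ref{thm:main} with the given $\eps$. It supplies an $L/(pn)$-spread probability measure $\nu$ on the family $\mathcal F$ of $\Gamma$-conflict-free Hamilton cycles of $H$. In particular, $\mathcal F$ is nonempty, because a probability measure cannot live on an empty family.

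The counting bound comes from testing the spread inequality on the full edge set of a single cycle. Fix any $Q\in\mathcal F$ and take $S\coloneqq E(Q)$, which satisfies $|S|=n$. Every Hamilton cycle has exactly $n$ edges, so the only member of $\mathcal F$ containing $S$ is $Q$ itself. Spread therefore gives
$$
 \nu(\{Q\})=\nu\bigl[S\subseteq E(F)\bigr]\leq\left(\frac{L}{pn}\right)^{n}.
$$
Every cycle thus carries mass at most $(L/(pn))^n$. Summing over the support of $\nu$ yields
$$
 1=\sum_{Q\in\operatorname{supp}(\nu)}\nu(\{Q\})\leq|\operatorname{supp}(\nu)|\left(\frac{L}{pn}\right)^{n}.
$$
Hence $|\mathcal F|\geq|\operatorname{supp}(\nu)|\geq(pn/L)^n$.

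One small point needs care: distinct cycles must be distinct edge sets. Under Definition~\ref{def:spread}, $\mathcal F$ is a family of subgraphs of $H$, and a Hamilton cycle is determined by its edge set. So counting members of $\mathcal F$ counts distinct cycles, with no double counting from orientation or starting vertex.

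There is no real obstacle here beyond having Theorem~\ref{thm:main} in hand. The condition $M\geq 2L$ from that theorem gives $pn/L\geq2$, so the resulting count is at least $2^n$, which is genuinely exponential. Proposition~\ref{prop:rainbow-counting} then follows by encoding the colouring as a conflict graph of maximum degree at most $\mu pn$ and taking $a\coloneqq1/L$.
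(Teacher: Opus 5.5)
Your proposal is correct and follows essentially the same route as the paper's proof. It applies the spread inequality to $S=E(Q_0)$ for a single cycle, uses that every Hamilton cycle has exactly $n$ edges so that containment forces equality, bounds each atom of $\nu$ by $(L/(pn))^n$, and sums over $\operatorname{supp}(\nu)$.
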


\begin{corollary}[Survival under prescribed conflicts and percolation]\label{cor:conflict-percolation}
Fix $\eps\in(0,1/2)$ and let $c,\mu,L,M,n_0$ be as in Theorem~\ref{thm:main}. There exist a $D \coloneqq D(L)>0$ as well as a deterministic sequence $\xi_n\to0$ with the following property. For all sufficiently large $n$, every $G,H,\Gamma$ satisfying the hypotheses of Theorem~\ref{thm:main} under either Dirac alternative, including~\eqref{eq:relative-host-degree} in the relative case, and every \mbox{$D\log n/(pn)\leq t\leq1$}, satisfy
\begin{equation}\label{eq:conflict-percolation}
 \PP[H_t\text{ contains a }\Gamma\text{-conflict-free Hamilton cycle}]
 \geq1-\xi_n.
\end{equation}
The probability is over the independent edge retention, with $G,H,\Gamma$ fixed. The sequence $\xi_n$ is independent of those choices and of $p,t$.
\end{corollary}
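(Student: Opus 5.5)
We deduce the statement from Theorem~\ref{thm:main} combined with the spread-to-threshold theorem of Frankston, Kahn, Narayanan and Park~\cite{FKNP2021}. Fix $G,H,\Gamma$ satisfying the hypotheses of Theorem~\ref{thm:main}, under either Dirac alternative. Let $\mathcal F$ denote the family of edge sets of $\Gamma$-conflict-free Hamilton cycles of $H$. This is a family of $n$-element subsets of the ground set $E(H)$. By Theorem~\ref{thm:main}, $\mathcal F$ carries an $L/(pn)$-spread probability measure $\nu$. Since we assume $pn\geq D\log n$, the family is nonempty, and the spread parameter is $q\coloneqq L/(pn)\leq L/(D\log n)<1$ once $D>L$.

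We then invoke~\cite[Theorem~1.6 and Remark~2.2]{FKNP2021} with $r=n$. For an absolute constant $K$, if $W$ is an independent random subset of $E(H)$ with inclusion probability $t\geq Kq\log n$, then $W$ contains a member of $\mathcal F$ with probability at least $1-\xi'_n$. Here $\xi'_n\to0$ depends only on $r=n$ and not on the ground set or the family. Setting $D\coloneqq \max\{KL,2L\}$ gives
$$
 t\geq\frac{D\log n}{pn}\geq KL\frac{\log n}{pn}=Kq\log n,
$$
so the hypothesis is met for every $t\in[D\log n/(pn),1]$. The random set $W$ is precisely $E(H_t)$.

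Membership $E(Q)\subseteq E(H_t)$ for some $Q\in\mathcal F$ means that $H_t$ contains a Hamilton cycle of $H$ whose edges are pairwise non-conflicting in $\Gamma$. Conflict-freeness is a property of the edge set alone, so it is inherited regardless of which other edges survive. This yields~\eqref{eq:conflict-percolation} with $\xi_n\coloneqq\xi'_n$.

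The main point requiring care is uniformity. The version of the FKNP theorem we use must give an error depending only on $r$, for instance of the form $O(1/\log r)$ or $o_r(1)$ via the quantitative statement in~\cite{FKNP2021}. It must also be monotone in $t$, which follows because containing a member of $\mathcal F$ is an increasing property. If only the threshold form with a failure probability such as $\varepsilon$ is available, we will instead use the quantitative statement that $t\geq K q\log(r)/\varepsilon$ suffices for failure probability $\varepsilon$. We then take $\varepsilon=\varepsilon_n\to0$ slowly and absorb the loss by letting the effective constant grow. Since $D$ must be fixed, the cleanest route is the form in~\cite{FKNP2021} where the failure probability tends to zero as $r\to\infty$ at a fixed multiple $K$.

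Finally, ``sufficiently large $n$'' must account both for $n_0$ from Theorem~\ref{thm:main} and for the range of $r$ where the FKNP bound applies. The resulting $\xi_n$ does not depend on $G,H,\Gamma,p$ or $t$, as required.
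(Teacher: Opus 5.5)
Your proposal is correct and is essentially the paper's proof. Both apply Theorem~\ref{thm:main} to get an $L/(pn)$-spread measure on the $n$-uniform family of conflict-free Hamilton cycles, then invoke the independent-retention form of the FKNP spread-to-threshold theorem (the paper's Lemma~\ref{lem:spread-threshold-binomial}) with $r=n$, $q=L/(pn)$ and $D$ a constant multiple of $L$; uniformity of $\xi_n$ comes from that lemma's error depending only on $r$. Two small points: nonemptiness of the family follows from Theorem~\ref{thm:main} itself (under $pn\geq M$), not from $pn\geq D\log n$, and your fallback with $\varepsilon_n\to0$ is unnecessary.
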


\medskip
The intermediate spanning subgraphs $F$, introduced in the high-level description of the construction, are referred to as \emph{carriers}; this as they carry a Hamilton cycle in their midst. The Hamiltonicity of carriers is delivered through a result of Dragani\'c, Kim, Lee, Munh\'a Correia, Pavez-Sign\'e and Sudakov~\cite[Theorem~1.4]{DKLMPS2025} permitting the carrier to have constant degree scale $K$, provided that small pairs of vertex sets do not span too many edges. 

\begin{definition}\label{def:sparsity}
Given $\eta,\rho,r>0$, an $n$-vertex graph $F$ is \emph{$(\eta,\rho,r)$-sparse} provided
\begin{equation}\label{eq:sparsity-definition}
 e_F(U,W)\leq(1+\rho)\eta rn|U|
 \quad\text{holds, whenever }\eta rn\leq|U|=|W|\leq\eta n.
\end{equation}
The sets $U,W\subseteq V(F)$ need not be disjoint.
\end{definition}

Upon being given a bijumbled host $G$, a Dirac subgraph $H\subseteq G$, as well as a conflict graph
$\Gamma$ on $E(H)$, the next result constructs a probability measure on the
carriers $F\subseteq H$ such that any carrier found in the support of said measure is $\Gamma$-conflict-free; in that, not every carrier is $\Gamma$-conflict-free. Every carrier assigned positive probability, beyond being conflict-free, satisfies
satisfies the {\em upper-density condition} introduced in~\eqref{eq:sparsity-definition}, with parameters $(\eta,\rho,K/n)$, as well as with
$$
 \delta(F)\geq\left(\frac12+\frac{\varepsilon}{4}\right)K
 \qquad\text{and}\qquad
 e(F)\leq2Kn.
$$
Thus, every vertex retains more than $K/2$ incident edges,
whilst the total number of retained edges is at most a constant
multiple of $n$, since $K$ is independent of $n$.

% Finally, the next result also controls the distribution of these carriers.
% For every prescribed edge set $S\subseteq E(H)$, it bounds the
% probability that a carrier sampled from the constructed measure
% contains all edges of $S$. This is the spread estimate used
% subsequently to obtain counting and percolation results. 

\medskip

Our main result reads as follows.

\begin{theorem}[Conflict-free sparse carriers]\label{thm:carrier}
For every $\eps\in(0,1/2)$ and $\eta,\rho\in(0,1/4)$, there exist an integer $K\geq1$, constants $c,\mu,M>0$, and an integer $n_0$, depending only on $\eps,\eta,\rho$, with the following property. Let $n\geq n_0$, $p\in(0,1]$, and $pn\geq M$. Let $G$ be an $n$-vertex $(p,\beta)$-bijumbled graph with $\beta\leq cpn$, and let $H$ be an absolute $\eps$-Dirac subgraph of $G$ at density $p$. Alternatively, let $H$ be a relative $\eps$-Dirac subgraph of $G$ and assume~\eqref{eq:relative-host-degree}. Under either alternative, for every conflict graph $\Gamma$ on $E(H)$ with $\Delta(\Gamma)\leq\mu pn$, there is a probability measure $\lambda$ on conflict-free spanning subgraphs $F\subseteq H$ such that every graph in its support is $(\eta,\rho,K/n)$-sparse and satisfies
$$
 \delta(F)\geq\left(\frac12+\frac\eps4\right)K
 \quad \text{as well as} \quad e(F)\leq2Kn.
$$
Moreover, for every $S\subseteq E(H)$,
\begin{equation}\label{eq:carrier-spread}
 \lambda[S\subseteq E(F)]
 \leq\left(\frac K{pn}\right)^{|S|}
       \left(\frac43\right)^{|V(S)|}
 \leq\left(\frac{16K}{9pn}\right)^{|S|},
\end{equation}
where $V(S)$ denotes the set of vertices incident with at least one edge in $S$. 
\end{theorem}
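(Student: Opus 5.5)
We follow the four-step scheme of the Methodology and stop after the conditioning step. The measure $\lambda$ will be the law of a percolated subgraph of a degree-regularised $J\subseteq H$, conditioned on avoiding a family of bad events.

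\textbf{Step 1: minimum degree of $H$ and regularisation.} Under the relative alternative, \eqref{eq:relative-host-degree} gives
$$\deg_H(v)\geq(1/2+\eps)(1-\eps/4)pn\geq(1/2+3\eps/4)pn.$$
Under the absolute alternative this bound is immediate. By an integer-flow (or $f$-factor/Hall type) argument we extract $J\subseteq H$ with $(1/2+\eps/2)pn\leq\delta(J)\leq\Delta(J)\leq D_0pn$. The flow problem is feasible by a cut condition, which we verify using bijumbledness with $\beta\leq cpn$. Bijumbledness also gives $e_J(U,W)\leq e_G(U,W)\leq p|U||W|+\beta\sqrt{|U||W|}$.

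\textbf{Step 2: percolation and bad events.} Set $\theta=K/(pn)$ and let $R=J_\theta$. We introduce three families of bad events.
\begin{enumerate}
\item \emph{Conflict events} $A_{ef}=\{e,f\in R\}$ for each $ef\in E(\Gamma)$, each of probability $\theta^2$.
\item \emph{Low-degree events} $B_v=\{\deg_R(v)<(1/2+\eps/4)K\}$, of probability $e^{-\Omega(\eps^2K)}$ by Chernoff, since $\EE\deg_R(v)\geq(1/2+\eps/2)K$.
\item \emph{Density violations} $C_{U,W}$ for $\eta K\leq|U|=|W|\leq\eta n$ with $e_R(U,W)>(1+\rho)\eta K|U|$. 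We restrict these to "connected" configurations so that dependencies stay local.
\end{enumerate}
In the third family, $\EE e_R(U,W)\leq\theta(p|U|^2+\beta|U|)\leq K|U|(\eta+c)$, so for $c\ll\rho\eta$ Chernoff gives a bound $\exp(-\Omega(\rho\eta K|U|))$. This beats the entropy $\binom{n}{|U|}^2$ only in the local, connected formulation. The event $e(F)>2Kn$ is global; we handle it by a first-moment/Markov bound on the conditional law, or more cleanly by noting that $\Delta(J)\leq D_0pn$ makes $e(R)$ small. Alternatively, we can read it off from sparsity applied to a partition into sets of size $\eta n$.

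\textbf{Step 3: lopsided local lemma and spread.} All bad events are increasing except $B_v$, which is decreasing. We therefore use the lopsided LLL in the form suited to product measures (Erd\H{o}s–Spencer), with dependency determined by shared edges. The conflict events have weight $\theta^2$ and each edge lies in at most $\mu pn$ of them, giving total contribution $\mu pn\cdot\theta^2\cdot D_0pn\cdot\theta=\mu D_0K^2\cdot\theta$ per edge. This is small once $\mu\ll 1/(D_0K^2)$, which fixes the choice of $\mu$ after $K$. We assign weights $x_A=(1+\delta)\PP[A]$ and obtain $\PP[\bigcap\bar A]>0$.

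The spread bound comes from the standard conditional form of the LLL:
$$\PP\bigl[S\subseteq R\bigm|\textstyle\bigcap\bar A\bigr]\leq\PP[S\subseteq R]\prod_{A\sim S}(1-x_A)^{-1}.$$
The product runs over bad events depending on edges of $S$. We organise it vertex by vertex: the events touching edges at $v$ have total weight at most $\log(4/3)$, which is where the factor $(4/3)^{|V(S)|}$ comes from. Since $\PP[S\subseteq R]=\theta^{|S|}$, this yields \eqref{eq:carrier-spread}. The second inequality in \eqref{eq:carrier-spread} follows from $|V(S)|\leq2|S|$.

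\textbf{Main obstacle.} The hardest part is the density events. Their number is exponential, so a naive LLL fails. The expected approach is to index them by connected structures, for instance pairs $(U,W)$ that are witnessed by a connected subgraph of $R$ on $U\cup W$. We would then count these via trees in a graph of maximum degree $D_0pn$, giving at most $(eD_0pn)^{k}$ candidates of size $k$. This must be balanced against $\theta^{\Omega(\eta K k)}$, which makes the count summable for $pn\geq M$ and large $K$. It remains to check that this connected version still implies the full sparsity condition \eqref{eq:sparsity-definition}: a violating pair should contain a violating connected piece, by averaging over components.
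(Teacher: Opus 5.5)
Your outline has the paper's architecture: flow regularisation, percolation at rate $\theta=K/(pn)$, a lopsided local lemma over degree, conflict and connected density events, tree witnesses, and a conditional local lemma for spread. But the step you defer as the main obstacle does not close as you set it up, for two reasons.

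\emph{The density events are indexed by the wrong family.} You use equal-sized pairs with $|U|=|W|\geq\eta K$, and the component argument does not land back in this family. The components $Z_i$ of $R[U\cup W]$ split a violating pair into pieces $(U\cap Z_i,W\cap Z_i)$ of unequal and possibly tiny sizes. So avoiding your connected events does not imply~\eqref{eq:sparsity-definition}. The missing idea is to define the connected events $T_{U,W}$ for all nonempty $U,W$ with $|U|+|W|\leq2\eta n$, with threshold $(1+\rho)\eta K(|U|+|W|)/2$. Both this threshold and $e_R(U,W)$ are additive over components (Lemma~\ref{lem:connected-reduction}). These small, unbalanced events must then also be shown to be unlikely.

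\emph{The probability accounting is wrong.} Write $k=|U\cup W|$. A violation is only a $(1+\rho)$-factor excess over a mean that can already be about $\eta Kk/2$, for example for $|U|=|W|=\eta n$ with $e_J(U,W)\approx p|U||W|$. Its probability is then of order $\exp(-\Theta(\rho^2\eta Kk))$, with no decay in $pn$, not $\theta^{\Omega(\eta Kk)}$. Against $(eD_0pn)^k$ candidates your sum diverges as $pn\to\infty$. What works is to let connectivity pay for the count:
\begin{itemize}
\item some spanning tree of $J[U\cup W]$ must be retained, with probability $\theta^{k-1}$, which cancels the $(4D_0pn)^{k-1}$ trees through $v$ exactly;
\item conditioning on that tree leaves the other indicators independent;
\item the tree contributes at most $2(k-1)$ to $e_R(U,W)$, which is negligible once $K\geq32/(\rho\eta)$;
\item Chernoff on the remaining edges gives $\exp(-aKk)$.
\end{itemize}
The result is a geometric series with ratio $16D_0Ke^{-aK}$, small for large $K$ and independent of $pn$ and $n$. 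See Lemmata~\ref{lem:connected-density}, \ref{lem:fixed-tree-density-tail} and~\ref{lem:tree-pair-witness-count}.

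The local-lemma bookkeeping also fails as written. A density event $C$ on $k$ vertices must be adjacent to the degree events $B_v$ at its vertices. Indeed, $\{\deg_R(v)\geq t\}$ is increasing, so by Harris conditioning on it does not decrease $\PP[C]$. Moreover, $\PP[B_v]\geq(1-\theta)^{D_0pn}\geq e^{-2D_0K}$ is bounded below independently of $n$. With your uniform weights $x_A=(1+\delta)\PP[A]$, the criterion for $C$ then requires $\prod_{v}(1-x_{B_v})\geq(1+\delta)^{-1}$. This fails once $k>\delta e^{2D_0K}$, and $k$ ranges up to $2\eta n$.

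So the weights of increasing events must grow exponentially in their vertex sets. The paper takes $x(D_v)=1/4$ and $x(B_i)=(4/3)^{|Z_i|}\PP[B_i]$. This is why the density budget~\eqref{eq:connected-density-budget} carries the factor $(4/3)^{|U\cup W|}$, which $e^{-aKk}$ absorbs.

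You should also make explicit that only events of opposite monotonicity with overlapping supports are joined (Lemma~\ref{lem:monotone-lopsided}). If every pair of events sharing an edge is joined, each $B_v$ is adjacent to the $\Theta(pn)$ events $B_u$, $u\in N_J(v)$. Their probabilities do not shrink with $pn$, so the criterion fails.

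The same monotonicity yields the spread factor. By Harris, $\{S\subseteq E(R)\}$ need only be charged for the decreasing events whose supports meet $S$, namely $D_w$ for $w\in V(S)$ (Lemma~\ref{lem:extra-event}). This gives exactly $(1-1/4)^{-|V(S)|}=(4/3)^{|V(S)|}$. Your vertex-by-vertex claim that the events touching edges at $v$ have total weight at most $\log(4/3)$ cannot hold if it counts the degree events at the neighbours of $v$, whose total weight grows with $pn$.

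Finally, of your three routes to $e(F)\leq2Kn$, only the last gives a bound valid for every graph in the support. That route is sparsity over a partition into sets of size about $\eta n$, or averaging as in Lemma~\ref{lem:connected-reduction}. The Markov and first-moment routes do not.
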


% Theorem~\ref{thm:main} follows by selecting a Hamilton cycle in each carrier, with $L=16K/9$. Its proof and the counting and percolation deductions precede the carrier construction in Section~\ref{sec:assembly}.

\subsubsection{Methodology for Theorem~\ref{thm:carrier}}
The construction of the probability measure asserted in
Theorem~\ref{thm:carrier} has three components, preceded by
a deterministic degree regularisation step producing the subgraph $J$ introduced in the high-level description of the measure construction in the opening of this section. The desired measure is obtained by conditioning on a percolation of $J$ avoiding a collection of specified, so called, bad events. The three components
below establish the estimates needed to justify this
conditioning, verify the properties of every graph in the
support of the resulting measure, and prove its spread bound.

\medskip
\noindent
\textbf{Component I. A conditional law on conflict-free carriers.}
For each $e\in E(J)$, let $X_e$ denote its {\em retention indicator}, by which we mean an independent Bernoulli variable with parameter $\theta=K/(pn)$; the percolation of $J$, namely $R$, contains $e$ provided $X_e=1$ holds. 

The first type of bad events that we seek to avoid are {\sl low-degree events}, given by
$$
 D_v=\left\{\deg_R(v)<\left(\frac12+\frac\eps4\right)K\right\}.
$$
The second type of bad events to avoid have the form 
$$
A_{e,f}=\{X_e=X_f=1\};
$$ 
these indicate the inclusion of conflicting pairs in the percolation $R$. A third type of bad events that are to be avoided are referred to as {\em connected density violations}; these are introduced in {\bf Component~II} below. 

A low-degree event is decreasing in the retention indicators/coordinates, whereas a conflict or connected density events are increasing. Indeed, adding retained edges can lead a low degree event to cease from occurring whereas such additions do not harm conflict events and connected density events from occurring. A {\sl support} of an event is a set of retention coordinates which determines its occurrence. Gearing up towards an application of the lopsided local lemma~\cite{ES1991}, we construct a graph over said bad events by joining two bad events provided their chosen supports overlap and their monotonicities are opposite. Harris' inequality~\cite{Harris1960} verifies that the resulting graph fits the requirements of the lopsided local lemma. Consequently, only degree events are adjacent to conflict or density events. Once the resulting local probability inequalities have been checked, the event $\mathcal E$ of avoiding every bad event has positive probability. We then define $\lambda$ as the law of $R$ conditional on $\mathcal E$, and let $F$ have law $\lambda$.

\medskip
\noindent
\textbf{Component II. Connected density witnesses.}
For nonempty $U,W\subseteq V(J)$, the third type of bad events to be avoided are those for which $R[U \cup W]$ is {\lsstyle\sl connected} and satisfies
$$
 e_R(U,W)>(1+\rho)\eta K\frac{|U|+|W|}{2},
 \qquad |U|+|W|\leq2\eta n;
$$
here, the use of pairs of unequal sizes is {\lsstyle\sl deliberate}. Both the ordered incidence count and $|U|+|W|$ split additively over connected components. Therefore, every violation of this inequality has a connected component which still violates it. Avoiding the connected events consequently rules out all such density violations and implies the sparsity condition~\eqref{eq:sparsity-definition}.

For a fixed vertex $v\in V(J)$, the eventual application of the lopsided local lemma mandates that the sum of certain weights assigned to all 
connected density
events involving $v$ (i.e., $v \in U$ or  $v \in W$) is to be bounded uniformly over the choice of $v$. We estimate the local sum by using spanning trees as witnesses
of connectedness. Every connected density violation on $z$
vertices containing $v$ contains a retained spanning tree
rooted at $v$. Owing to us ensuring that $J$ would have a bounded maximum degree, there are at most $(4D_0pn)^{z-1}$ candidate
trees in $J$, and each is retained with probability
$(K/(pn))^{z-1}$. Multiplying these quantities gives
$(4D_0K)^{z-1}$. Thus, the dependence on $pn$ disappears.

For each fixed candidate tree, we next condition on its
retention. The remaining edge indicators are still
independent and the retained tree alone contributes too little
to cause a density violation; this in turn implies that the remaining edges must
contribute more than their expected number by a fixed
proportion. Bijumbledness of $G$ then aids in controlling these
expectations, and a Chernoff estimate bounds the conditional
probability of this excess by $\exp(-aKz)$, where $a>0$
depends only on $\eta,\rho$.

Finally, we sum over the candidate trees and the pairs of
vertex sets defining the events. For each tree, there are
at most $3^z$ such pairs, since each vertex belongs to the
first set, the second set, or both. Including the
local-lemma weight factor $(4/3)^z$, the resulting bound is
a geometric series with ratio $16D_0K\exp(-aK)$. Choosing
$K$ sufficiently large makes the total sum small, uniformly
over $v$. This bounds the combined weight of the density
events adjacent to the low-degree event at $v$, as required
by the lopsided local lemma.

\medskip
\noindent
\textbf{Component III. Controlling the effect of conditioning.}
Fix an arbitrary $S\subseteq E(J)$ and put $A_S=\{S\subseteq E(R)\}$. Due to independence, the unconditioned probability of occurence of $A_s$ satisfies $\PP[A_S] \leq \theta^{|S|}$; however, we seek to bound $\PP[A_S \mid \cE]$. Whilst $\PP[\cE] >0$ is guaranteed by the local lemma, proceeding directly through $\PP[A_S \mid \cE] \leq \theta^{|S|}/\PP[\cE]$ is futile as $\cE$ is potentially a rare event. The issue here, is to prove that conditioning on $\cE$ has the bias incurred by any set of edges $S \subseteq E(J)$ anticipated and properly controlled as to ensure spread.   

To control the effect of conditioning on a potentially rare event, we apply a so called {\sl additional-event} form
of the conditional local lemma, recorded in
Lemma~\ref{lem:extra-event}. This lemma applies to $A_S$
without including it amongst the bad events. Since $A_S$ is
increasing, the resulting bound charges only those
decreasing bad events whose defining retention indicators
include an indicator indexed by $S$. These are precisely
the low-degree events $D_v$ at vertices incident with an
edge of $S$. Thus, the bound depends only on these local
events, rather than on the probability of the entire
avoidance event $\cE$.
Following a certain weight assignment defined for the application of the local lemma, we ultimately prove that the conditioning cost is at most $(4/3)^{|V(S)|}$ and thus obtain
$$
 \PP[A_S\mid\mathcal E]
 \leq\theta^{|S|}\left(\frac43\right)^{|V(S)|},
$$
which is~\eqref{eq:carrier-spread}. Roughly put, this argument controls the bias incurred towards every prescribed edge set without requiring any lower bound on $\PP[\mathcal E]$ independent of $n$.

\begin{remark}
 The use of the local lemma to guarantee the emergence of conflict-free expanders in random graphs dates back to the work of Krivelevich, Lee, and Sudakov~\cite{KLS2016}. To a certain extent, the lopsided local lemma underlies the {\sl switching} approach of Coulson, Keevash, Perarnau and Yepremyan~\cite{CKPY2020}. An additional-event conditional probability estimate following (classical form of the) local-lemma avoidance appears in the work of Haeupler, Saha and Srinivasan~\cite[Theorem~2.1]{HSS2011}; Jain and Pham~\cite{JP2024} employ these estimates to construct spread measures. 
\end{remark}

\subsection{Organisation}
In Section~\ref{sec:assembly}, we deduce the Hamilton-cycle spread theorem, namely Theorem~\ref{thm:main}, from the carrier theorem - Theorem~\ref{thm:carrier} - and establish the rainbow, counting, and percolation results. The random-graph specialisation is also proved in this section. Section~\ref{sec:carrier-proof} contains the proof of the carrier theorem. 

\section{From sparse carriers to Hamilton cycles}\label{sec:assembly}
In this section, we deduce Theorem~\ref{thm:main} from the carrier
theorem, namely Theorem~\ref{thm:carrier}, as well as companion counting and percolation results. From these all rainbow-oriented results follow. 

\subsection{Conflict-free and rainbow Hamiltonicity: Existence}
To deduce
Theorem~\ref{thm:main} from Theorem~\ref{thm:carrier}, we employ the
following Hamiltonicity criterion due to Dragani\'c, Kim, Lee, Munh\'a
Correia, Pavez-Sign\'e and Sudakov~\cite[Theorem~1.4]{DKLMPS2025}; the sparsity condition appearing in its statement is that introduced in
Definition~\ref{def:sparsity}.

\begin{theorem}\label{thm:imported-hamilton}
\emph{~\cite[Theorem~1.4]{DKLMPS2025}}
For every $\gamma\in(0,1/2)$, there exist $\eta_0,\rho_0>0$
such that the following holds for all $0<\eta<\eta_0$ and
$0<\rho<\rho_0$. For any integers $K,n$ with $1\leq K<n$,
every $n$-vertex $(\eta,\rho,K/n)$-sparse graph $F$ satisfying
$\delta(F)\geq(1/2+\gamma)K$ contains a Hamilton cycle.
\end{theorem}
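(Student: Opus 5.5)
Theorem~\ref{thm:imported-hamilton} is imported verbatim from~\cite[Theorem~1.4]{DKLMPS2025}, and in the paper it is used as a black box. If I were to reprove it, I would follow the expander-based Hamiltonicity framework of Dragani\'c, Montgomery, Munh\'a Correia, Pokrovskiy and Sudakov~\cite{DMMPS2024}. Its input is that the graph is an expander in two senses: small sets expand, and large disjoint sets are joined by edges.

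\textbf{Step 1: small sets expand.} Fix a set $X$ with $|X|\leq\eta n/C$ for a large constant $C$. I would show $|N_F(X)|\geq C|X|$. Suppose not, and take $U=X\cup N_F(X)$, padded up to size $\eta Kn\cdot(1/n)\cdot n=\eta K$ if necessary. Then $e_F(U,U)\geq K|X|/2$, because every vertex of $X$ has all of its at least $(1/2+\gamma)K$ neighbours inside $U$. On the other hand,~\eqref{eq:sparsity-definition} gives $e_F(U,U)\leq(1+\rho)\eta K|U|$. For $\eta$ small relative to $1/C$ and $\gamma$, these two bounds contradict each other. The same counting argument shows that $F$ has no sparse cuts among small sets.

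\textbf{Step 2: large sets are joined.} This is where the Dirac-type half-degree condition is used. Let $A,B$ be disjoint sets with $|A|,|B|\geq\eta n$ and no edges between them. Every vertex of $A$ then sends more than $K/2$ edges into $V\setminus B$. The natural move is to apply sparsity to a set built from $A$ and part of $V\setminus(A\cup B)$, but sparsity is only assumed for sets of size at most $\eta n$. I would therefore first pass to subsets $A'\subseteq A$ and $B'\subseteq B$ of size exactly $\eta n$, chosen by averaging so that their neighbourhoods are forced to overlap. The idea is that two families of vertices, each with more than half of their $K$ neighbours concentrated on sets of size $O(\eta n)$, cannot both avoid each other. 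The overlap produces a pair $(U,W)$ of size $\eta n$ with $e_F(U,W)>(1+\rho)\eta K|U|$, which contradicts~\eqref{eq:sparsity-definition}.

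\textbf{Step 3: build the cycle.} With expansion (Step 1) and joinedness (Step 2) in hand, I would run the DMMPS machinery.
\begin{enumerate}
\item Use extendable paths and Pósa rotations to cover almost all vertices by a bounded number of paths.
\item Use an absorbing or linking structure, built from small expanding sets and sorting-network gadgets, to merge these paths.
\item Close the result into a Hamilton cycle, again by rotation--extension.
\end{enumerate}

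\textbf{Main obstacle.} I expect Step 2 to be the hardest part. The degree bound only just exceeds $K/2$, and only upper density bounds are available, so guaranteeing edges between linear-sized sets is delicate; it is exactly here that the threshold $1/2+\gamma$ enters. I would first try a counting argument on $e_F(A\cup N(A),\,\cdot)$ restricted to pieces of size $\eta n$, combined with the fact that $N(A)$ and $N(B)$ must overlap substantially.
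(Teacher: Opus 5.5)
Citing \cite[Theorem~1.4]{DKLMPS2025} as a black box, as you and the paper both do, is fine. Your reproof sketch, however, fails at Step~2, and Step~3 depends on it. Step~2 asserts something false: under the hypotheses of Theorem~\ref{thm:imported-hamilton}, two disjoint sets of size at least $\eta n$ need not span any edge between them.

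\textbf{A counterexample to Step~2.} Take $K=n-1$, which is allowed since only $1\leq K<n$ is required. Fix $\beta$ with $\eta\leq\beta<1/2-\gamma$. Let $F$ be $K_n$ with all edges between two disjoint sets $A,B$ of size $\lceil\beta n\rceil$ removed.
\begin{itemize}
\item For large $n$, $\delta(F)=n-1-\lceil\beta n\rceil\geq(1/2+\gamma)K$.
\item Whenever $\eta K\leq|U|=|W|\leq\eta n$, we have $e_F(U,W)\leq|U||W|\leq\eta n|U|\leq(1+\rho)\eta K|U|$ once $n\geq1+1/\rho$. So $F$ is $(\eta,\rho,K/n)$-sparse.
\item Yet $e_F(A,B)=0$.
\end{itemize}
This is not an artefact of large $K$. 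Resilience is exactly the setting in which an adversary may delete all edges between two linear-sized sets. For instance, take an $(n,d,\lambda)$-graph with $\lambda\leq cd$ and two disjoint sets of size about $n/10$, chosen to avoid the few vertices with atypically many neighbours in the other set. Deleting all edges between them leaves an absolute $\eps$-Dirac subgraph $H$ for every $\eps\leq1/4$. Every carrier $F\subseteq H$ built in Section~\ref{sec:carrier-proof} then inherits $e_F(A,B)=0$. So no averaging can give the contradiction you describe. $F$ is in general not a $C$-expander in the sense of~\cite{DMMPS2024}, and the machinery of Step~3 cannot be applied to it as a black box.

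\textbf{What the hypotheses do give.} Half-degree plus sparsity yields a Dirac-type substitute for joinedness. Let $|X|\geq\eta n$ and choose $X'\subseteq X$ of size $\lfloor\eta n\rfloor$. Cover $N_F(X')$ by sets of that size and apply~\eqref{eq:sparsity-definition} to each. Since $e_F(X',N_F(X'))\geq(1/2+\gamma)K|X'|$, this forces $|N_F(X)|>(1/2+\gamma/2)n$ once $\eta,\rho$ are small relative to $\gamma$ and $n$ is large. When $K$ is very close to $n$, the same bound already follows from $\delta(F)$ alone. Hence two disjoint large sets have more than $\gamma n$ common neighbours. The overlap of $N(A)$ and $N(B)$ you mention is real, but it gives paths of length two, not edges.

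The missing ingredient is a rotation--extension, connecting and absorbing argument that works with this weaker connectivity. That is where the threshold $1/2$ genuinely enters, and it is the new content of~\cite{DKLMPS2025}.

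\textbf{Smaller points.} Your Step~1 is essentially sound, with two fixes. Sets of size below $K/(2C)$ must be handled directly by the minimum degree, since padding them up to size $\eta K$ yields no contradiction. The covering argument above then extends expansion to all sets of size up to $n/(2C)$, which is the range~\cite{DMMPS2024} needs.

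Finally, the statement as transcribed needs $n$ large. If $\eta n<1$, the sparsity condition is vacuous, and a perfect matching on an even number $n\geq4$ of vertices with $K=1$ satisfies every hypothesis without being Hamiltonian.
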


In particular, $K$ may be a fixed constant. This is why the
constant minimum degree supplied by Theorem~\ref{thm:carrier}
is sufficient for the existence deduction.

\begin{proof}[Proof of Theorem~\ref{thm:main}]
Given $\eps\in(0,1/2)$, let $\eta_0,\rho_0$ be the constants guaranteed by 
Theorem~\ref{thm:imported-hamilton} for $\gamma=\eps/4$;
choose constants 
$$
 0<\eta<\min\{\eta_0,1/4\} \quad \text{as well as} \quad
 0<\rho<\min\{\rho_0,1/4\}.
$$
Let $K,c,\mu,M_0,n_0$ be supplied by Theorem~\ref{thm:carrier}
for $\eps,\eta,\rho$, and put $L=16K/9$ and
$M=\max\{M_0,2L,2/\mu\}$.
Increase $n_0$, if necessary, so that $n_0>K$.

Fix $n\geq n_0$, a host $G$, a spanning subgraph $H$ and a
conflict graph $\Gamma$ satisfying the hypotheses of
Theorem~\ref{thm:main}. The absolute and relative alternatives are identical in the two theorem statements, including the host minimum-degree bound in the relative case. Thus, Theorem~\ref{thm:carrier} applies under either alternative and supplies a
probability measure $\lambda$ on spanning conflict-free subgraphs
$F\subseteq H$. Every member of its support is
$(\eta,\rho,K/n)$-sparse and satisfies
$$
 \delta(F)\geq\left(\frac12+\frac\eps4\right)K.
$$
Since $K<n$, Theorem~\ref{thm:imported-hamilton} guarantees a
Hamilton cycle in every such $F$.

Fix a total order on the Hamilton cycles on $V(G)$. For each
$F\in\operatorname{supp}(\lambda)$, let $Q(F)$ be the first
Hamilton cycle contained in $F$. This defines a deterministic
selection from a finite nonempty set. Since $F$ contains no pair
of conflicting edges, neither does $Q(F)$. Assign to each
conflict-free Hamilton cycle $Q$ the mass
$$
 \nu[\{Q\}]
 =\sum_{\substack{F\in\operatorname{supp}(\lambda)\\Q(F)=Q}}
   \lambda[\{F\}].
$$
The masses are nonnegative and sum to one, so $\nu$ is a probability
measure on the conflict-free Hamilton cycles of $H$.

For every $S\subseteq E(H)$, the inclusion
$E(Q(F))\subseteq E(F)$ implies
\begin{align*}
 \nu[S\subseteq E(Q)]
 &\leq\lambda[S\subseteq E(F)]\\
 &\leq\left(\frac K{pn}\right)^{|S|}
       \left(\frac43\right)^{|V(S)|}\\
 &\leq\left(\frac{16K}{9pn}\right)^{|S|}
  =\left(\frac L{pn}\right)^{|S|}.
\end{align*}
The second inequality is~\eqref{eq:carrier-spread}; the third
uses $|V(S)|\leq2|S|$. Thus, $\nu$ has the required spread
parameter, and its nonempty support also yields a conflict-free
Hamilton cycle. For an eligible colouring, the conflict graph
joining distinct equally coloured edges has maximum degree at
most $\mu pn$; applying the construction to this graph gives
the stated measure on rainbow Hamilton cycles.
\end{proof}

Theorem~\ref{thm:rainbow} now follows easily. 

\begin{proof}[Proof of Theorem~\ref{thm:rainbow}]
Use the constants in Theorem~\ref{thm:main}; both Dirac alternatives, together with the host degree condition in the relative case, are covered by that theorem. Fix a globally
$\mu pn$-bounded colouring $\varphi$ of an eligible $H$ and
define a graph $\Gamma_\varphi$ on $E(H)$ by joining distinct
edges assigned the same colour. An edge in a colour class of
size $b$ has degree $b-1$ in $\Gamma_\varphi$, whence
$\Delta(\Gamma_\varphi)\leq\mu pn$.
A set of edges is independent in $\Gamma_\varphi$ precisely
when its colours are pairwise distinct. Theorem~\ref{thm:main}
therefore gives a probability measure supported on rainbow
Hamilton cycles of $H$. In particular, such a cycle exists.
\end{proof}

\subsection{Counting Hamilton cycles}\label{sec:counting}
In this section, we prove Corollary~\ref{cor:conflict-counting}
which then implies Proposition~\ref{prop:rainbow-counting}. The spread inequality
bounds the mass assigned to each individual cycle; summing these
masses delivers the enumeration result.

\begin{proof}[Proof of Corollary~\ref{cor:conflict-counting}]
Fix eligible $G,H$ and $\Gamma$, under either Dirac alternative. Theorem~\ref{thm:main}
supplies a $q$-spread probability measure $\nu$ on the conflict-free
Hamilton cycles of $H$, where $q=L/(pn)$. Let $Q$ have law
$\nu$ and fix $Q_0\in\operatorname{supp}(\nu)$.
Both $Q_0$ and every possible realisation of $Q$ have exactly
$n$ edges. Consequently, $E(Q_0)\subseteq E(Q)$ holds precisely
when $Q=Q_0$, with cycles identified by their edge sets.
Applying the spread inequality to all edges of $Q_0$ gives
$$
 \nu[\{Q_0\}]
 =\PP[Q=Q_0]
 =\PP[E(Q_0)\subseteq E(Q)]
 \leq q^n.
$$
Summing over the support, we obtain
$$
 1=\sum_{Q_0\in\operatorname{supp}(\nu)}\nu[\{Q_0\}]
 \leq |\operatorname{supp}(\nu)|q^n.
$$
Thus, $H$ contains at least $q^{-n}=(pn/L)^n$ distinct
conflict-free Hamilton cycles.
\end{proof}

Proposition~\ref{prop:rainbow-counting} now follows. 

\begin{proof}[Proof of Proposition~\ref{prop:rainbow-counting}]
Set $a=1/L$, and retain the remaining constants from
Theorem~\ref{thm:main}. For every eligible colouring $\varphi$,
the graph $\Gamma_\varphi$ used in the proof of
Theorem~\ref{thm:rainbow} has maximum degree at most $\mu pn$.
Corollary~\ref{cor:conflict-counting} supplies at least
$(pn/L)^n=(apn)^n$ conflict-free Hamilton cycles, each of which
is rainbow. No growing lower bound on $pn$ was used in either
counting deduction; the fixed bound $pn\geq M$ suffices.
\end{proof}

\subsection{Survival under independent percolation}
In this subsection, we prove Corollary~\ref{cor:conflict-percolation}
and Proposition~\ref{prop:rainbow-percolation}. The additional $\log n$ factor imposed here on the required retained degree arises when 
the spread-to-threshold theorem~\cite{FKNP2021} is applied. 

% We state the form of this
% theorem needed below and give the reductions that accommodate
% arbitrary probability measures and independent edge retention.

By a multihypergraph $\mathcal F$ on a finite ground set $X$ we mean a
family of subsets of $X$ in which repetitions are permitted.
Such a multihypergraph is said to be \emph{$r$-bounded} provided each of its edges has size at most $r$; it is called \emph{$\kappa$-spread} if
$$
 |\{F\in\mathcal F\colon S\subseteq F\}|
 \leq |\mathcal F|\kappa^{-|S|}
$$
holds, whenever $S\subseteq X$, 
where both counts include multiplicities. Equivalently, such a multihypergraph is $\kappa$-spread provided the
uniform law on its edges is $\kappa^{-1}$-spread.
Write $X_m$ for a uniformly chosen $m$-element subset of $X$. 

\medskip

The spread-to-threshold theorem reads as follows. 

\begin{theorem}\label{thm:spread-threshold-fixed}
\emph{~\cite[Theorem~1.6]{FKNP2021}}
There exist an absolute constant $b\geq1$ and a deterministic
sequence $\zeta_r\to0$ such that the following holds for all
sufficiently large integers $r$. Let $X$ have size $N\geq r$,
and let $\mathcal F$ be a nonempty $r$-bounded,
$\kappa$-spread multihypergraph on $X$, where $\kappa>0$.
If
$$
 m=\left\lceil b\kappa^{-1}N\log r\right\rceil\leq N,
$$
then
$$
 \PP[X_m\text{ contains a member of }\mathcal F]\geq1-\zeta_r.
$$
\end{theorem}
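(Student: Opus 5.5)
This statement is imported from~\cite[Theorem~1.6]{FKNP2021}, so the plan is to check that the displayed formulation matches the cited one, rather than to reprove the spread-to-threshold phenomenon. Two points need attention: the use of multihypergraphs with multiplicities, and the rounding of $m$. The cited theorem is stated for $r$-bounded, $\kappa$-spread hypergraphs with sampling size $(b\kappa^{-1}\log r)N$. According to~\cite[Remark~2.2]{FKNP2021}, the argument never uses that the edges are distinct: every count in the proof is taken with multiplicity, and $\kappa$-spreadness is used only through the bound on $|\{F\in\mathcal F: S\subseteq F\}|$. So I first verify that their notion of spread agrees with the multiplicity-counting one given before the statement. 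It does, since both amount to saying that the uniform law on the edge list is $\kappa^{-1}$-spread.

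For the rounding, I use monotonicity. The event that $X_m$ contains a member of $\mathcal F$ is increasing. Under the standard coupling, a uniform $m$-subset contains a uniform $m'$-subset for $m'\leq m$. Hence, if the cited theorem gives the conclusion for the possibly non-integer value $b\kappa^{-1}N\log r$, interpreted as its floor or as any integer at least that value, then it also holds for the ceiling $m$. The hypothesis $m\leq N$ only ensures that $X_m$ is defined. The hypothesis $N\geq r$ is harmless: an $r$-bounded family on a smaller ground set can simply be regarded as $N$-bounded.

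If a self-contained argument were wanted, I would follow the FKNP scheme. Write $m$ as a union of about $\log r$ rounds, each adding a uniformly random set $W_i$ of size roughly $b\kappa^{-1}N$. In each round, replace every edge $F$ by a minimal fragment $F\setminus W_i$ taken over choices $F'\subseteq F\cup W_i$. The key lemma says that, with high probability over $W_i$, the family of fragments that remain large, say of size at least $0.9|F|$, has small total weight. This is proved by counting pairs $(W,F)$ with a large minimal fragment, using spread to bound the number of $F$ through a given fragment. After $O(\log r)$ rounds the fragments become empty, so a member of $\mathcal F$ lies in $X_m$. The error probabilities sum to a quantity $\zeta_r\to 0$. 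The main obstacle in this route is the fragment-counting lemma. That is exactly the content of~\cite{FKNP2021}, so in the paper I would simply cite it and record the two reductions above.
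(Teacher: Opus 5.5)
Your proposal is correct and matches the paper, which likewise does not reprove the statement but imports it directly from \cite[Theorem~1.6]{FKNP2021}. Your checks on multiplicities and on rounding $m$ up via the monotone coupling of uniform subsets are sound, and they make explicit what the paper leaves implicit. The one point the paper does flag, and which you should state outright rather than leave implicit in your sketch, is that the error sequence $\zeta_r$ is uniform over $X$, $\mathcal F$ and $\kappa$; the paper justifies this by appeal to the proof in \cite[Section~5]{FKNP2021}.
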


The uniformity of the error term over the ground set and the
multihypergraph follows from the proof of this result seen in~\cite[Section~5]{FKNP2021}.
The following independent-retention version is a consequence of
that theorem and the reductions seen in~\cite[Section~2 and
Remark~2.2]{FKNP2021}. 

\begin{lemma}
\label{lem:spread-threshold-binomial}{\em ~\cite{FKNP2021}}
There exist an absolute constant $B>0$ and a deterministic
sequence $\xi_r\to0$ with the following property for all
sufficiently large integers $r$. Let $X$ be finite and let
$\mathcal F\subseteq\binom Xr$ be nonempty. Suppose that
$\mathcal F$ admits a $q$-spread probability measure, where
$q>0$. If $Bq\log r\leq t\leq1$ and $X_t$ includes each
element of $X$ independently with probability $t$, then
$$
 \PP[X_t\text{ contains a member of }\mathcal F]\geq1-\xi_r.
$$
The sequence is independent of $X,\mathcal F,q$ and $t$.
\end{lemma}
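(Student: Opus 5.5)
The plan is to deduce the lemma from Theorem~\ref{thm:spread-threshold-fixed} in three reductions: from a probability measure to a multihypergraph, from uniform $m$-subsets to binomial retention, and finally from small retention rates to all larger ones by monotonicity.

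\textbf{Step 1: measure to multihypergraph.} Let $\nu$ be the given $q$-spread probability measure on $\mathcal F\subseteq\binom Xr$. Replace $\nu$ by a measure with rational masses. Perturb each mass to a rational value within a factor $1+\delta$, then renormalise; the spread inequality then holds with $q'=(1+\delta)^2 q$, say. Take $\delta$ so that $(1+\delta)^{2r}\le 2$, i.e.\ per-element factor at most $2^{1/r}$. Let $T$ be a common denominator and form the multihypergraph $\mathcal F'$ in which each $F$ appears $T\nu'[\{F\}]$ times. Then $\mathcal F'$ is nonempty and $r$-bounded, and it is $\kappa$-spread with $\kappa=1/q'\ge 1/(2q)$, since only the base matters up to an absolute constant. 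Members of $\mathcal F'$ are members of $\mathcal F$, so containing a member of $\mathcal F'$ suffices.

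\textbf{Step 2: uniform to binomial.} Let $N=|X|\ge r$, which holds since $\mathcal F$ is nonempty. Set $m=\lceil b\kappa^{-1}N\log r\rceil$ with $b$ from Theorem~\ref{thm:spread-threshold-fixed}. Take $t\ge Bq\log r$ with $B=8b$, say.
\begin{itemize}
\item If $m\le N$, condition $X_t$ on its size. Given $|X_t|=k$, the set $X_t$ is a uniform $k$-subset. Containing a member of an up-closed family is monotone in $k$, via the standard coupling of $X_k\subseteq X_{k+1}$. So
$\PP[X_t\supseteq\text{member}]\ge \PP[|X_t|\ge m]\,(1-\zeta_r)$.
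Since $\EE|X_t|=tN\ge 8b\,q N\log r\ge 4m$ (using $m\le 2b\kappa^{-1}N\log r+1$ and $\kappa^{-1}\le 2q$, with $N\log r$ large because $r$ is large), Chernoff gives $\PP[|X_t|<m]\le e^{-tN/8}\le e^{-\log r}=1/r$.
\item If instead $m>N$, then $\kappa^{-1}\log r$ is of order at least $1/b$. Apply the theorem with $\kappa$ enlarged so that $m\le N$ holds; this only weakens the spread hypothesis, so it is legitimate. Alternatively, handle this case directly: here $t$ is bounded below by an absolute constant, and we couple $X_t\supseteq X_{t'}$ for a smaller $t'$ with $B q\log r\le t'$ where $m'\le N$.
\end{itemize}
Setting $\xi_r=\zeta_r+1/r$ completes the bound.

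\textbf{Main obstacle.} The delicate point is the boundary bookkeeping: guaranteeing $m\le N$, the rounding in $m$, and that $N\log r$ is large so that Chernoff applies. I would handle all of these by reducing to a suitable $t'\le t$ via monotone coupling of binomial retention. The uniformity of $\xi_r$ in $X,\mathcal F,q,t$ is inherited from the uniformity of $\zeta_r$ noted after Theorem~\ref{thm:spread-threshold-fixed}.
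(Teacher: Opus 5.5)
Your route is the one the paper relies on. The paper gives no proof of this lemma; it simply points to the reductions in Section~2 and Remark~2.2 of \cite{FKNP2021}. Those reductions are exactly yours: approximate the spread measure by a multihypergraph with rational weights, then pass from $X_t$ to uniform $m$-sets by conditioning on $|X_t|$ and using monotonicity of the up-closure. Step~1 and the case $m\le N$ are sound in substance.

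\textbf{The case $m>N$ is argued incorrectly.}
\begin{itemize}
\item Enlarging $\kappa$ does not weaken the spread hypothesis; it strengthens it. Being $\kappa$-spread implies being $\kappa'$-spread for $\kappa'\le\kappa$, not for $\kappa'>\kappa$. Decreasing $\kappa$, which is all you may do, only increases $m$.
\item Your alternative, coupling with a smaller $t'$, does not help either. The quantity $m=\lceil b\kappa^{-1}N\log r\rceil$ does not depend on $t$ at all.
\end{itemize}
The correct observation is that this case is vacuous. Indeed, $m>N$ means $b\kappa^{-1}\log r>1$. Since $\kappa^{-1}=q'\le 2q$ and $B=8b$, this gives $Bq\log r\ge 4b\kappa^{-1}\log r>4>1$, so no admissible $t$ exists. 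Equivalently, whenever some $t\in[Bq\log r,1]$ exists, you have $b\kappa^{-1}\log r\le 1/4$, and $m\le N$ holds automatically. No reduction in $t$ is needed anywhere.

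\textbf{A smaller point on what must be large.} It is $qN\log r$, not $N\log r$, that needs to be large, since $q$ could a priori be tiny. You need this for two things.
\begin{itemize}
\item Absorbing the ceiling in $m$. As stated, $tN\ge 4m$ can fail by an additive constant. The bound $m\le tN/4+1\le tN/2$ is what you actually have, and it suffices for the Chernoff step.
\item Getting $e^{-tN/8}\le 1/r$.
\end{itemize}
The missing input comes from the spread hypothesis itself. We have $\sum_{x\in X}\nu[x\in F]=r$, and each summand is at most $q$, so $qN\ge r$. Hence $tN\ge Br\log r$.

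With these two repairs your argument proves the lemma, with $\xi_r=\zeta_r+1/r$ uniform in $X,\mathcal F,q,t$ as you say.
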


\noindent
% For self-containment, a proof of Lemma~\ref{lem:spread-threshold-binomial} is provided in Appendix~\ref{app:lem:spread-threshold-binomial} following the arguments seen in~\cite[Section~2 and
% Remark~2.2]{FKNP2021}. 

\begin{proof}[Proof of Corollary~\ref{cor:conflict-percolation}]
Let $B$ and $\xi_r$ be supplied by
Lemma~\ref{lem:spread-threshold-binomial} and set $D=BL$.
Fix eligible $G,H$ and $\Gamma$ before retaining edges.
Take $X=E(H)$ and let $\mathcal F$ be the family of edge sets
of the $\Gamma$-conflict-free Hamilton cycles of $H$.
Theorem~\ref{thm:main} supplies an $(L/(pn))$-spread measure
on this nonempty $n$-uniform family. Every
$t\geq D\log n/(pn)$ therefore satisfies the hypothesis of
Lemma~\ref{lem:spread-threshold-binomial}, with $r=n$ and
$q=L/(pn)$. Its conclusion is exactly~\eqref{eq:conflict-percolation}.
The error bound does not depend on $G,H,\Gamma$ or $t$.
Thus, the same sequence works uniformly for all eligible
choices fixed before percolation. This uniform statement
requires no union bound over those choices.
\end{proof}

\begin{proof}[Proof of Proposition~\ref{prop:rainbow-percolation}]
Increase $n_0$, if necessary, so that $D\log n\geq M$ for
every $n\geq n_0$; the assumed bound $pn\geq D\log n$
then ensures the lower bound required by Theorem~\ref{thm:main}.
For a fixed eligible colouring $\varphi$, form
$\Gamma_\varphi$ as in the proof of Theorem~\ref{thm:rainbow}.
Use Corollary~\ref{cor:conflict-percolation} with this conflict
graph. Every resulting conflict-free Hamilton cycle is rainbow
under the inherited colouring, and the same error sequence
applies to every eligible $H$ and $\varphi$.
\end{proof}

% The range $D\log n/(pn)\leq t\leq1$ is nonempty only when
% $pn\geq D\log n$. This is the source of the logarithmic
% lower bound in the percolation results. Such a restriction is
% absent from the existence and enumeration results. For
% comparison, suppose that $H$ has maximum degree at most a
% fixed integer $R\geq1$ and $t\leq1-\tau$, where $\tau>0$
% is fixed. There is an independent set $I$ in $H$ of size at
% least $n/(R+1)$. For distinct vertices of $I$, the sets of
% incident edges are disjoint, so their isolation events in
% $H_t$ are independent. Each vertex of $I$ is isolated with
% probability at least $\tau^R$. Consequently,
% $$
%  \PP[H_t\text{ has no isolated vertex}]
%  \leq(1-\tau^R)^{|I|}
%  \leq\exp\left(-\frac{\tau^R n}{R+1}\right).
% $$
% Thus, fixed retention probabilities below one cannot give a
% general survival theorem at bounded degree.

\section{Conflict-free carriers through conditioned percolation}\label{sec:carrier-proof}
In this section, we prove Theorem~\ref{thm:carrier}. The argument has two ingredients, stated below as Lemmata~\ref{lem:regularised-dirac} and~\ref{lem:deterministic-carrier}. The former 
is a degree-regularisation result fit for Dirac subgraphs designed to retain the degree surplus seen in the minimum degree of a Dirac subgraphs whilst maintaining an eventually useful upper bound on the maximum degree. The latter constructs a spread measure on conflict-free sparse carriers in the resulting graph. In what follows, we first state these two lemmata, then deduce Theorem~\ref{thm:carrier} from said lemmata, and then prove each of these lemmata separately. 

\medskip 

\begin{lemma}[Degree regularisation preserving a Dirac surplus]\label{lem:regularised-dirac}
Fix $\eps\in(0,1/2)$ and set
$$
 \kappa\coloneqq \frac{\eps}{16},
 \qquad D \coloneqq 2+\frac{16}{\eps}.
$$
Suppose that $p\in(0,1]$, $pn\geq8/\eps$, and $c>0$ satisfies $c\sqrt{\kappa}\leq\eps/16$.
Let $G$ be an $n$-vertex graph and let $H$ be an absolute $\eps$-Dirac subgraph of $G$ at density $p$, or a relative $\eps$-Dirac subgraph of $G$ for which~\eqref{eq:relative-host-degree} holds. Suppose also that
$$
 e_H(A,B)\leq p|A||B|+cpn\sqrt{|A||B|}
 \qquad(A,B\subseteq V(H)).
$$
Then, $H$ contains a spanning subgraph $J$ such that
$$
 \left(\frac12+\frac{\eps}{2}\right)pn
 \leq\delta(J)\leq\Delta(J)\leq Dpn.
$$
The same upper bound on $e_J(A,B)$ holds for all $A,B\subseteq V(J)$.
\end{lemma}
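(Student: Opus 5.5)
The plan is to first boost the minimum degree of $H$ to $(1/2+3\eps/4)pn$ under either alternative, and then extract $J$ as the union of prescribed ``out-edge'' sets produced by an integral maximum flow. In that flow, every vertex chooses $d'\coloneqq\lceil(1/2+\eps/2)pn\rceil$ incident edges, and every vertex is chosen by at most $(D-1)pn$ of its neighbours. The final assertion is immediate from $J\subseteq H$, since $e_J(A,B)\leq e_H(A,B)$.

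\emph{Step 1 (minimum degree).} In the absolute case $\delta(H)\geq(1/2+\eps)pn$ directly. In the relative case, \eqref{eq:relative-host-degree} gives
$$
\deg_H(v)\geq(1/2+\eps)(1-\eps/4)pn=(1/2+7\eps/8-\eps^2/4)pn\geq(1/2+3\eps/4)pn,
$$
using $\eps^2/4\leq\eps/8$, which holds since $\eps<1/2$. Put $d\coloneqq(1/2+3\eps/4)pn$. Note that $d'\leq(1/2+\eps/2)pn+1\leq(1/2+5\eps/8)pn$ because $pn\geq8/\eps$.

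\emph{Step 2 (flow network).} Build a network with a source $s$, a sink $t$, and two copies $v^-,v^+$ of each vertex. It has arcs $s\to v^-$ of capacity $d'$, arcs $v^-\to u^+$ of capacity $1$ for each ordered pair with $uv\in E(H)$, and arcs $u^+\to t$ of capacity $(D-1)pn$. If the maximum flow equals $d'n$, integrality gives an integral flow. Each $v$ then selects exactly $d'$ distinct edges $vu$, namely those carrying flow out of $v^-$, and each $u$ is selected by at most $(D-1)pn$ vertices. Let $J$ be the union of all selected edges. Then $\deg_J(v)\geq d'$, since the out-edges of $v$ are distinct. Moreover, every edge of $J$ at $v$ is either selected by $v$ or selects $v$, so $\deg_J(v)\leq d'+(D-1)pn\leq Dpn$ because $d'\leq pn$.

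\emph{Step 3 (min-cut bound; the main step).} A cut is determined by the set $X$ of vertices $v$ with $v^-$ on the source side and the set $Y$ of vertices $u$ with $u^+$ on the source side. Its capacity is
$$
d'|V\setminus X|+e_H(X,V\setminus Y)+(D-1)pn|Y|.
$$
So it suffices to show $d'|X|\leq e_H(X,V\setminus Y)+(D-1)pn|Y|$ for all $X,Y$. If $(D-1)pn|Y|\geq d'|X|$ we are done. Otherwise $|Y|<|X|/(D-1)\leq\kappa|X|$, since $D-1\geq16/\eps$. In that case the hypothesis on $e_H$ gives
$$
e_H(X,V\setminus Y)\geq d|X|-p|X||Y|-cpn\sqrt{|X||Y|}.
$$
Here $p|X||Y|\leq pn|Y|\leq\kappa pn|X|=(\eps/16)pn|X|$, and $cpn\sqrt{|X||Y|}\leq c\sqrt\kappa\,pn|X|\leq(\eps/16)pn|X|$. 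Hence $e_H(X,V\setminus Y)\geq(1/2+5\eps/8)pn|X|\geq d'|X|$, and the cut bound holds in this case as well.

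The only delicate point is this case split in the cut analysis: the discrepancy bound controls $e_H(X,Y)$ only when $Y$ is small relative to $X$, and the sink capacity absorbs exactly the complementary case. The choice $D=2+16/\eps$ is what makes the two thresholds meet. The rounding in $d'$ is absorbed by the assumption $pn\geq8/\eps$.
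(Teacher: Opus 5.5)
Your proposal is correct and is essentially the paper's proof: the same two-copy flow network with a unit arc for each orientation of each edge of $H$, and the same cut inequality with the case split at $|Y|<\kappa|X|$, closed by the discrepancy bound and $c\sqrt\kappa\le\eps/16$; it differs only cosmetically in using source capacity $\lceil(1/2+\eps/2)pn\rceil$ and sink capacity $(D-1)pn$ instead of the paper's $r=\lfloor(1/2+5\eps/8)pn\rfloor$ and $t=\lceil r/\kappa\rceil$. One small repair is needed: $(D-1)pn$ need not be an integer, so to invoke integrality you should take the sink capacity to be $\lfloor(D-1)pn\rfloor$, which is still at least $16pn/\eps$ because $pn\ge1$, and then your case split and the bound $\deg_J(v)\le Dpn$ go through verbatim.
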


Given the essentially degree regular Dirac subgraph $J$ guaranteed by Lemma~\ref{lem:regularised-dirac}, the next lemma constructs a spread measure over conflict-free carriers in $J$ using an independent percolation of $J$ at rate $K/(pn)$, where $K$ is a sufficiently large fixed integer. 

\begin{lemma}[Spread construction]\label{lem:deterministic-carrier}
Let $\eps\in(0,1/2)$, $\eta,\rho\in(0,1/4)$ and $D\geq1$ be fixed, and set
$$
 a\coloneqq \frac{\rho^2\eta}{64},
 \qquad a_0 \coloneqq \left(\frac43\right)^4.
$$
Let an integer $K\geq1$ satisfy 
\begin{equation}\label{eq:carrier-K-choice}
 K\geq\max\left\{\frac{32}{\rho\eta},
                    \frac{32D\log8}{\eps^2}\right\},
 \quad 16DK\exp(-aK)\leq\frac12,
 \quad \text{and} \quad 8\exp(-aK)\leq\frac18.
\end{equation}
Let $n\geq4/\eta$, $p\in(0,1]$, $pn\geq8K$, as well as
$$
 0<c\leq\frac{\rho\eta}{8},
 \qquad 0<\mu\leq\frac{1}{8Da_0K^2}.
$$
Let $J$ be an $n$-vertex graph satisfying
\begin{equation}\label{eq:deterministic-host}
 \begin{gathered}
 \left(\frac12+\frac{\eps}{2}\right)pn
 \leq\delta(J)\leq\Delta(J)\leq Dpn,\\
 e_J(U,W)\leq p|U||W|+cpn\sqrt{|U||W|},
 \quad \text{whenever $U,W\subseteq V(J)$}.
 \end{gathered}
\end{equation}
Then, for every conflict graph $\Gamma$ on $E(J)$ with $\Delta(\Gamma)\leq\mu pn$, there exists a probability measure $\lambda$ set over the conflict-free spanning subgraphs of $J$ such that every graph $F$ found in its support satisfies
$$
 \delta(F)\geq\left(\frac12+\frac{\eps}{4}\right)K,
 \qquad e(F)\leq2Kn,
$$
as well as
\begin{equation}\label{eq:carrier-strong-density}
 e_F(U,W)\leq(1+\rho)\eta K\frac{|U|+|W|}{2},
 \qquad\text{whenever }|U|+|W|\leq2\eta n.
\end{equation}
Moreover, for every $S\subseteq E(J)$,
$$
 \lambda[\{F\mid S\subseteq E(F)\}]
 \leq\left(\frac{K}{pn}\right)^{|S|}
       \left(\frac43\right)^{|V(S)|}
 \leq\left(\frac{16K}{9pn}\right)^{|S|}.
$$
In particular, every graph in the support of $\lambda$ is $(\eta,\rho,K/n)$-sparse.
\end{lemma}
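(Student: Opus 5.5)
The plan follows Components I--III. Let $\theta\coloneqq K/(pn)\leq 1/8$, and let $R$ be the percolation of $J$ at rate $\theta$, with independent retention indicators $(X_e)_{e\in E(J)}$. We introduce three families of bad events.
\begin{itemize}
\item The low-degree events $D_v$, for $v\in V(J)$, with support the edges of $J$ at $v$. These are decreasing.
\item The conflict events $A_{e,f}=\{X_e=X_f=1\}$, for $ef\in E(\Gamma)$, with support $\{e,f\}$. These are increasing.
\item The connected density events $B_{U,W}$, for $|U|+|W|\leq 2\eta n$, asserting that $R[U\cup W]$ is connected and $e_R(U,W)>(1+\rho)\eta K(|U|+|W|)/2$. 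These are increasing, with support the edges of $J$ inside $U\cup W$.
\end{itemize}
The dependency graph joins two events when their supports intersect and their monotonicities differ. Harris' inequality then yields the lopsided condition, so $A$-events and $B$-events are adjacent only to $D$-events. Additionally, two $D$-events are adjacent when they share an edge; I may declare them adjacent whenever supports meet, for safety.

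\textbf{Weights and local conditions.} We use the Erd\H{o}s--Spencer form $\PP[E]\leq x_E\prod_{E'\sim E}(1-x_{E'})$. Set
\[
x_{D_v}=1/4,\qquad x_{A}=a_0\theta^2,\qquad x_{B_{U,W}}=(4/3)^{|U\cup W|}\,\PP[B_{U,W}]\cdot 2 .
\]
The checks proceed as follows.
\begin{itemize}
\item \emph{$D_v$.} We have $\mathbb E\deg_R(v)\geq(1/2+\eps/2)K$, so Chernoff gives $\PP[D_v]\leq\exp(-\eps^2K/32D\cdot\ldots)$, which is tiny by the choice of $K$. Its neighbours are at most $Dpn$ other $D$-events, at most $Dpn\cdot\mu pn$ conflict events (each contributing a factor $1-a_0\theta^2$, with total product at least $\exp(-2Dpn\,\mu pn\,a_0K^2/(pn)^2)\geq e^{-1/4}$ by the bound on $\mu$), and the density events touching $v$. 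By Component II, the density-event weights summed over those touching $v$ total at most $1/8$.
\item \emph{$A_{e,f}$.} It is adjacent only to $D$-events at at most $4$ vertices, so $\PP[A]=\theta^2\leq a_0\theta^2(3/4)^4$. This is exactly why $a_0=(4/3)^4$.
\item \emph{$B_{U,W}$.} It is adjacent to at most $|U\cup W|$ events $D_v$, so the factor $(4/3)^{|U\cup W|}$ absorbs these.
\end{itemize}

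\textbf{Component II (main obstacle).} We must show $\sum_{(U,W)\ni v} x_{B_{U,W}}\leq 1/8$. Write $z=|U\cup W|$. Connectivity gives a retained spanning tree of $J[U\cup W]$ containing $v$. There are at most $(4Dpn)^{z-1}$ such trees in $J$ through $v$, and each is retained with probability $\theta^{z-1}$. Conditional on the tree being retained, its $z-1$ edges contribute at most $2(z-1)$ to $e_R(U,W)$. This is below $(1+\rho)\eta Kz/2\cdot(\rho/(1+\rho))$ since $K\geq 32/(\rho\eta)$, so the remaining independent edges must exceed $(1+\rho/2)\eta Kz/2$. By the bijumbled bound together with $|U|,|W|\leq\eta n\cdot 2$, their expectation is
\[
\theta\bigl(p|U||W|+cpn\sqrt{|U||W|}\bigr)\leq K(\eta+c)z/2\cdot(\text{const}).
\]
I need this to be at most $(1+\rho/4)\eta Kz/2$; here I use $|U||W|\leq ((|U|+|W|)/2)^2\leq \eta n(|U|+|W|)/2$ and $c\leq\rho\eta/8$, noting that $z\leq |U|+|W|\leq 2z$ and that the ordered count double-counts consistently. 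Chernoff then bounds the conditional probability by $e^{-aKz}$ with $a=\rho^2\eta/64$. Summing over the $3^z$ ways to split the tree's vertices into $U$, $W$ or both, multiplying by $(4/3)^z\cdot 2$, and summing over $z\geq 2$ gives a geometric series with ratio $16DK e^{-aK}\leq 1/2$ and leading term controlled by $8e^{-aK}\leq 1/8$. The delicate point is getting the constants in the expectation versus threshold comparison to fit; this is where the unequal-size formulation and the additivity over components are used.

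\textbf{Support properties.} Let $\lambda$ be the law of $R$ given $\cE$, which is nonempty by the LLL. On $\cE$, $F$ is conflict-free and $\delta(F)\geq(1/2+\eps/4)K$. Any violation of~\eqref{eq:carrier-strong-density} splits over the components of $F[U\cup W]$ (restricting $U,W$ to a component), so some component is a connected violation, which is excluded; this gives~\eqref{eq:carrier-strong-density}. Specialising to $|U|=|W|$ yields sparsity. The bound $e(F)\leq 2Kn$ I will add as one more global increasing event or, more simply, deduce from density applied to small pieces. If neither works, I will include a partition argument: split $V$ into at most $1/\eta$ parts of size at most $\eta n$, and apply~\eqref{eq:carrier-strong-density} to pairs. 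This needs care, so the fallback is to condition additionally on the increasing events $\{\deg_R(v)>4K\}$, treated like density events.

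\textbf{Component III (spread).} Apply the additional-event conditional LLL (Lemma~\ref{lem:extra-event}) to the increasing event $A_S$. Only the decreasing events $D_v$ with $v\in V(S)$ are adjacent to it, so
\[
\PP[A_S\mid\cE]\leq \PP[A_S]\prod_{v\in V(S)}(1-x_{D_v})^{-1}=\theta^{|S|}(4/3)^{|V(S)|}.
\]
Finally, $|V(S)|\leq 2|S|$ gives the last inequality.
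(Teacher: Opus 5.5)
Your plan is the paper's plan. You percolate at rate $\theta=K/(pn)$, build the lopsided graph from opposite monotonicities with overlapping supports, set $x(D_v)=1/4$, control the density events through retained spanning trees, and get spread from the additional-event bound, which charges only the $D_v$ with $v\in V(S)$.

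One step, however, fails as written. You make two degree events adjacent whenever they share an edge, and in the check for $D_v$ you count ``at most $Dpn$ other $D$-events'' among its neighbours. These contribute the factor $\prod_{u\in N_J(v)}(1-x(D_u))=(3/4)^{\deg_J(v)}\leq(3/4)^{pn/2}$ to the right-hand side of the criterion, and this tends to $0$ as $pn\to\infty$. By contrast, $\PP[D_v]\geq\PP[\deg_R(v)=0]\geq e^{-2DK}$ is bounded below independently of $n$; your choice of $K$ only gives $\PP[D_v]\leq1/8$, not something small in $n$. Shrinking $x(D_v)$ to compensate would force $K=\Omega(\log(pn))$, which the lemma does not allow.

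The $D$--$D$ edges are simply unnecessary. Take $D_v$ and a family $A'$ of non-neighbours, and fix the coordinates outside the edges at $v$. This determines every increasing event in $A'$. The remaining constraints $\{\deg_R(u)\geq t\}$ are increasing in the coordinates at $v$, so Harris gives $\PP[D_v\mid Q(A')]\leq\PP[D_v]$. This is Lemma~\ref{lem:monotone-lopsided}, and it is why the paper's graph has no edges between degree events. With those edges removed, your own estimates give $\frac14\cdot e^{-1/4}\cdot\frac78>\frac18\geq\PP[D_v]$, and Component~III goes through exactly as you state.

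Two smaller points. First, for $e(F)\leq2Kn$ your partition argument already works, and no extra event is needed. Split $V(J)$ into $k$ parts of size at most $\lfloor\eta n\rfloor$; since $n\geq4/\eta$, one has $k\eta<2$. Summing~\eqref{eq:carrier-strong-density} over all ordered pairs of parts gives $2e(F)\leq(1+\rho)k\eta Kn<4Kn$. The paper instead averages $e_F(U,U)$ over a uniformly random $\lfloor\eta n\rfloor$-set. Your fallback via events $\{\deg_R(v)>4K\}$ would fail. Such an event has vertex set of size $\deg_J(v)+1\geq pn/2$, so the loss $(3/4)^{|Z|}$ from its $D$-neighbours cannot be absorbed unless its probability is exponentially small in $pn$. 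It is not; it can even be close to $1$ when $D>4$.

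Second, in Component~II the threshold and the mean scale with $s=|U|+|W|$, not with $z$. The margins you wrote ($1+\rho/4$ against $1+\rho/2$) yield an exponent of only about $\rho^2\eta Ks/256$. To reach the $a=\rho^2\eta/64$ fixed by the hypotheses, proceed as follows:
use $K\geq32/(\rho\eta)$ to bound the tree's contribution by $2s\leq\frac{\rho}{8}\cdot\frac{\eta Ks}{2}$;
use $c\leq\rho\eta/8$ to bound the mean by $(1+\frac{\rho}{8})\frac{\eta Ks}{2}$;
then apply the weighted Chernoff bound with $\delta=\rho/2$.

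Your support for $B_{U,W}$, namely all edges of $J$ inside $U\cup W$, is the correct one, since connectivity of $R[U\cup W]$ also depends on edges inside $U\setminus W$.
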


The upper-density conclusion in~\eqref{eq:carrier-strong-density} permits sets of unequal size as well as overlapping sets. This strengthening, beyond~\eqref{eq:sparsity-definition}, is crucial as it makes the condition additive over connected components. It is used only in the construction; the equal-sized case is sufficient for the Hamiltonicity criterion.

\medskip

Lemma~\ref{lem:regularised-dirac} is proved in Section~\ref{sec:regularisation-proof} whilst Lemma~\ref{lem:deterministic-carrier} is proved in Section~\ref{sec:deterministic-carrier-proof}. Here, we proceed with the deduction of  Theorem~\ref{thm:carrier} from said lemmata. 

\begin{proof}[Proof of Theorem~\ref{thm:carrier}]
Fix $\eps,\eta,\rho$ as in the theorem, and put $\kappa=\eps/16$ and $D=2+16/\eps$. Choose
$$
 0<c\leq\min\left\{\frac{\rho\eta}{8},
                    \frac{\eps}{16\sqrt{\kappa}}\right\}
$$
as well as an integer $K$ satisfying~\eqref{eq:carrier-K-choice}. Such a choice is possible as $a=\rho^2\eta/64>0$ is fixed and $K\exp(-aK)$ tends to zero as $K$ tends to infinity. Finally, set
$$
 \mu=\frac{1}{8D(4/3)^4K^2},
 \qquad M=\max\left\{\frac8\eps,8K\right\},
 \qquad n_0=\left\lceil\frac4\eta\right\rceil.
$$

Let $G,H,\Gamma$ satisfy the hypotheses of the theorem. Since $H\subseteq G$ and $\beta\leq cpn$, bijumbledness implies
$$
 e_H(A,B)\leq e_G(A,B)
 \leq p|A||B|+cpn\sqrt{|A||B|},
$$
whenever $A,B\subseteq V(G)$. The chosen Dirac alternative and the preceding upper-density bound verify every hypothesis of Lemma~\ref{lem:regularised-dirac}. The latter then supplies a spanning subgraph $J\subseteq H$ satisfying~\eqref{eq:deterministic-host}. Restrict $\Gamma$ to its induced subgraph on $E(J)$; its maximum degree remains at most $\mu pn$. Lemma~\ref{lem:deterministic-carrier} now provides the required measure on subgraphs of $J$, which we also regard as a measure on subgraphs of $H$.

All graphs in the support of said measure possess the required minimum degree and edge count. Taking $|U|=|W|$ in~\eqref{eq:carrier-strong-density} verifies the sparsity condition throughout the required interval $\eta K\leq|U|=|W|\leq\eta n$. The containment estimate in Lemma~\ref{lem:deterministic-carrier} applies to every $S\subseteq E(J)$. If $S$ contains an edge of $H$ outside $J$, its containment probability is zero. Thus, the asserted estimate holds for every $S\subseteq E(H)$.
\end{proof}

\subsection{Degree regularisation of Dirac subgraphs}\label{sec:regularisation-proof}
In this section, we prove Lemma~\ref{lem:regularised-dirac}. The proof employs a flow argument in order to to select which edges to keep per vertex in a simultaneously manner.  

\begin{proof}[Proof of Lemma~\ref{lem:regularised-dirac}]
We start by establishing a common minimum-degree lower bound for the two Dirac alternatives. The absolute condition immediately yields $\delta(H)\geq(1/2+3\eps/4)pn$. Under the relative condition~\eqref{eq:relative-dirac}, the host degree bound gives
\begin{align*}
 \deg_H(v)
 &\geq\left(\frac12+\eps\right)\deg_G(v)\\
 &\geq\left(\frac12+\eps\right)
          \left(1-\frac\eps4\right)pn\\
 &=\left(\frac12+\frac{7\eps}{8}
                    -\frac{\eps^2}{4}\right)pn \\
 &\geq\left(\frac12+\frac{3\eps}{4}\right)pn
\end{align*}
at every vertex $v$, where the last inequality holds as the difference between the terms involved is $\eps(1-2\eps)/8\geq0$. The remainder of the proof uses only this common bound on the minimum degree as well as the upper-density hypothesis.

Put
$$
 \gamma \coloneqq \frac12+\frac{5\eps}{8},
 \qquad r\coloneqq \lfloor\gamma pn\rfloor,
 \qquad t \coloneqq \left\lceil\frac r\kappa\right\rceil;
$$
our choices support 
\begin{equation}\label{eq:regularisation-slack}
 \kappa+c\sqrt\kappa+\gamma
 \leq\frac{\eps}{16}+\frac{\eps}{16}
       +\frac12+\frac{5\eps}{8}
 =\frac12+\frac{3\eps}{4}.
\end{equation}

We proceed to construct a capacitated network in which a maximum integral flow is to deliver the degree regularisation of the graph. To construct this network, form disjoint left and right copies of $V(H)$, together with a source vertex $s$ and a sink sink vertex $z$. Assign each arc from $s$ to a left vertex capacity $r$, and each arc from a right vertex to $z$ capacity $t$. For every edge $xy\in E(H)$, add arcs $x_Ly_R$ and $y_Lx_R$, each of capacity one. We show that this network has an integral flow of value $rn$.

Consider a cut separating $s$ from $z$. Let $A,B\subseteq V(H)$ consist of the vertices whose left and right copies, respectively, lie on the source side of the cut. Its capacity is
$$
 r(n-|A|)+e_H(A,V(H)\setminus B)+t|B|.
$$
Thus, it suffices to establish
\begin{equation}\label{eq:regularisation-cut}
 e_H(A,V(H)\setminus B)+t|B|\geq r|A|.
\end{equation}
If $|B|\geq\kappa|A|$, then $t|B|\geq t\kappa|A|\geq r|A|$, which proves~\eqref{eq:regularisation-cut} in this case.

Suppose instead that $|B|<\kappa|A|$; in particular, $A$ is nonempty. The upper-density assumption yields
\begin{align*}
 e_H(A,B)
 &\leq p|A||B|+cpn\sqrt{|A||B|}\\
 &=pn|A|\left(\frac{|B|}{n}
                  +c\sqrt{\frac{|B|}{|A|}}\right)\\
 &\leq pn|A|(\kappa+c\sqrt\kappa).
\end{align*}
The last inequality uses $|B|/n<\kappa|A|/n\leq\kappa$ and $|B|/|A|<\kappa$. Subtracting this bound from the sum of the degrees over $A$, and then using~\eqref{eq:regularisation-slack}, we obtain
\begin{align*}
 e_H(A,V(H)\setminus B)
 &=\sum_{v\in A}\deg_H(v)-e_H(A,B)\\
 &\geq pn|A|\left(\frac12+\frac{3\eps}{4}-\kappa-c\sqrt\kappa\right)\\
 &\geq\gamma pn|A|\geq r|A|.
\end{align*}
This also proves~\eqref{eq:regularisation-cut}.

Every cut has capacity at least $rn$, and the total capacity leaving $s$ is exactly $rn$. The integral max-flow min-cut theorem therefore supplies an integral flow of value $rn$. All arcs from $s$ are saturated. Consequently, exactly $r$ unit arcs leave each left vertex, and at most $t$ unit arcs enter each right vertex.

It remains to translate the resulting flow into the target subgraph $J$. Include an undirected edge of $H$ in $J$ whenever at least one of its two corresponding arcs carries one unit of flow. The outgoing arcs at a vertex select $r$ distinct neighbours, so its degree in $J$ is at least $r$. Every incident selected edge arises from an outgoing or an incoming unit arc, so the degree is at most $r+t$. Since $pn\geq8/\eps$, we have
$$
 r\geq\gamma pn-1
 \geq\left(\frac12+\frac{\eps}{2}\right)pn.
$$
For the upper bound, $\gamma<1$, $pn\geq1$ and $1/\kappa=16/\eps$ imply
$$
 r+t
 \leq\gamma pn\left(1+\frac1\kappa\right)+1
 \leq\left(2+\frac{16}\eps\right)pn
 =Dpn.
$$
Finally, $J\subseteq H$ preserves every upper-density bound. This completes the proof.
\end{proof}

\subsection{Construction of the carrier measure}\label{sec:deterministic-carrier-proof}
In this section, we prove Lemma~\ref{lem:deterministic-carrier}. Starting with the graph $J$ supplied by degree regularisation and consequently satisfying~\eqref{eq:deterministic-host}, we retain its edges independently and condition on the resulting graph having sufficiently large degrees, containing no conflicting pair of edges, and having no connected density violation (defined in~\eqref{eq:connected-event} below). Two probabilistic estimates justify this conditioning. The first gear up towards and application of the local lemma and bounds the total (local lemma) weight of the connected density events involving a fixed vertex; the second ensures that all the unwanted events can be avoided simultaneously, through the local lemma, and then proceeds to control the probability of retaining any prescribed set of edges after conditioning. 

% A deterministic argument then verifies the density and edge-count conclusions for every graph in the support of the measure.

All of the above we organise into three ingredients, stated below, and from which we then deduce Lemma~\ref{lem:deterministic-carrier};  proofs of the former are postponed until later sections. Table~\ref{tab:carrier-proof-ingredients} records their distinct roles in the deduction and specifies the locations of their proofs.

\begin{table}[htbp]
\centering
\small
\renewcommand{\arraystretch}{1.2}
\caption[Ingredients for the carrier measure]{%
\textbf{Ingredients for the carrier measure.}\\
The roles and proof locations of these three lemmata used to
establish Lemma~\ref{lem:deterministic-carrier}.}
\label{tab:carrier-proof-ingredients}

\begin{tabularx}{\textwidth}{
  @{}l >{\raggedright\arraybackslash}X l@{}
}
\toprule
Ingredient & Role in the proof of Lemma~\ref{lem:deterministic-carrier}& Proof location \\
\midrule

Lemma~\ref{lem:connected-reduction}
&
Asserting that connected density violations events suffice for an eventual local lemma application
&
Section~\ref{sec:connected-reduction-proof}
\\[4pt]

Lemma~\ref{lem:connected-density}
&
Bounding eventual local lemma weights assigned to connected density violations events; union-bound through counting trees that can be managed owing to $\Delta(J)$ being bounded. 
&
Section~\ref{sec:connected-events}
\\[4pt]

Lemma~\ref{lem:monotone-conditioning}
&
Ensures that the conditioning event has positive probability
and that conditioning preserves the spread parameter.
&
Section~\ref{sec:conditional-tools}
\\

\bottomrule
\end{tabularx}
\end{table}

\medskip

We start by defining {\sl connected density events}. Given an $n$-vertex graph $J$, parameters $\eta,\rho,K>0$, and $\theta\in(0,1)$, let $R=R_{J,\theta}$ be the $\theta$-rate percolation of $J$; write $X_e$ to denote the retention indicator of $e\in E(J)$. Call an ordered pair $(U,W)$ of nonempty subsets of $V(J)$ \emph{admissible} if $|U|+|W|\leq2\eta n$. For any spanning subgraph $F\subseteq J$, define $J_{U,W}(F) \coloneqq F[U \cup W]$. An edge spanned by $U\cap W$ is a single edge of this graph, although it contributes twice to the ordered count. Events of the form 
\begin{equation}\label{eq:connected-event}
 T_{U,W} \coloneqq \left\{J_{U,W}(R)\text{ is connected and }
 e_R(U,W)>(1+\rho)\eta K\frac{|U|+|W|}{2}\right\}
\end{equation}
are called {\em connected density violation events}. Observe, crucially, that the events $T_{U,W}$ are increasing as adding retained edges does not harm connectivity or lead for a density violation to cease.

\medskip
The first ingredient explains why it suffices to exclude these connected violations; below we see that the connectivity aspect of these events is crucial to our ability to sum up the weights that are to be assigned to such events in an eventual local lemma application.

\begin{lemma}[From connected violations to density and edge bounds]\label{lem:connected-reduction}
Let $J$ be an $n$-vertex graph, let $\eta,\rho,K>0$, and let $F\subseteq J$ be a spanning subgraph satisfying
$$
 e_F(U,W)\leq(1+\rho)\eta K\frac{|U|+|W|}{2}
$$
for every admissible pair $(U,W)$ for which $J_{U,W}(F)$ is connected. Then, the same inequality holds for all $U,W\subseteq V(J)$ satisfying $|U|+|W|\leq2\eta n$. If, in addition, $n\geq4/\eta$ and $0<\eta,\rho<1$, then $e(F)\leq2Kn$.
\end{lemma}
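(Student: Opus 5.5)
The first assertion is a decomposition along connected components, and the edge bound follows by applying it to a fixed partition of $V(J)$ into small parts.

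\textbf{First assertion.} Fix $U,W\subseteq V(J)$ with $|U|+|W|\leq 2\eta n$. If $U$ or $W$ is empty, then $e_F(U,W)=0$ and there is nothing to prove, so assume both are nonempty. Let $C_1,\dots,C_k$ be the vertex sets of the connected components of $F[U\cup W]$, and put $U_i=U\cap C_i$ and $W_i=W\cap C_i$. The $C_i$ partition $U\cup W$, so
$$
 |U|+|W|=\sum_i\bigl(|U_i|+|W_i|\bigr).
$$
Every edge of $F$ with one endpoint in $U$ and the other in $W$ lies in $F[U\cup W]$, hence inside a single component. Every ordered pair counted in $e_F(U,W)$ is therefore counted in exactly one $e_F(U_i,W_i)$, so $e_F(U,W)=\sum_i e_F(U_i,W_i)$. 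It thus suffices to bound each summand by $(1+\rho)\eta K(|U_i|+|W_i|)/2$.
\begin{itemize}
\item If $U_i$ or $W_i$ is empty, the summand is zero.
\item Otherwise, $(U_i,W_i)$ is admissible, since $|U_i|+|W_i|\leq|U|+|W|\leq 2\eta n$. Moreover $U_i\cup W_i=C_i$, so $J_{U_i,W_i}(F)=F[C_i]$, which is connected. The hypothesis therefore applies directly to $(U_i,W_i)$.
\end{itemize}
Summing over $i$ proves the first assertion.

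\textbf{Edge bound.} Partition $V(J)$ into parts $V_1,\dots,V_m$, each of size at most $\eta n$. Every pair $(V_i,V_j)$, including $i=j$, then satisfies $|V_i|+|V_j|\leq 2\eta n$, so the first assertion applies to it. Using that $e_F(V,V)=2e(F)$ counts ordered pairs,
$$
 2e(F)=\sum_{i,j}e_F(V_i,V_j)\leq(1+\rho)\eta K\sum_{i,j}\frac{|V_i|+|V_j|}{2}=(1+\rho)\eta K m n .
$$
It therefore suffices to choose the partition so that $(1+\rho)\eta m\leq 4$.

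The natural choice is parts of size $\lfloor\eta n\rfloor$, with one possibly smaller part. Then $m\leq n/\lfloor\eta n\rfloor+1$, and since $\eta n\geq4$ we get $\lfloor\eta n\rfloor\geq 3\eta n/4$. This gives
$$
 \eta m\leq \tfrac43+\eta .
$$
Combined with $\rho<1$, this is comfortably enough when $\eta$ is small, for example $\eta<1/4$ as in Lemma~\ref{lem:deterministic-carrier}, where $(1+\rho)(4/3+\eta)<4$.

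\textbf{Main obstacle.} The only delicate step is this arithmetic over the full stated range $0<\eta,\rho<1$. There the crude estimate above gives $(1+\rho)(4/3+\eta)$, which can exceed $4$. I would first try to sharpen the choice of $m$: take $m=\lceil 1/\eta\rceil$ when the corresponding balanced parts fit in size $\eta n$, and otherwise use a short case split on $\eta\geq1/2$ versus $\eta<1/2$, exploiting $\eta n\geq 4$ to absorb rounding. The aim is to secure $\eta m\leq 1+\eta\leq 2$, so that $(1+\rho)\eta m<4$. Failing that, I would note that only $\eta<1/4$ is ever used in the paper, where the simple bound already suffices.
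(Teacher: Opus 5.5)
Your first assertion follows the paper's argument: decompose $F[U\cup W]$ into components and use that both $e_F(\cdot,\cdot)$ and $|U|+|W|$ are additive over them. You sum directly where the paper argues by contradiction, and it is correct.

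\textbf{Edge bound: a different route.} The paper averages. It takes a uniformly random set $U$ of size $s=\lfloor\eta n\rfloor$ and applies the density bound only to the diagonal pair $(U,U)$, which gives $2e(F)\,s(s-1)/(n(n-1))\le(1+\rho)\eta Ks$. Rounding is then absorbed uniformly via $s-1\ge\eta n-2\ge\eta n/2$ (this is where $\eta n\ge4$ is used) and $1+\rho<2$. So no case analysis in $\eta$ is needed, and only equal-sized pairs are used.

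Your deterministic partition into $m$ parts of size at most $\eta n$ also uses the off-diagonal pairs $(V_i,V_j)$. Your identity $2e(F)\le(1+\rho)\eta Kmn$ is correct. Before rounding, both routes give essentially $e(F)\le(1+\rho)Kn/2$. What your route costs is the rounding in $m$.

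\textbf{Closing the remaining range of $\eta$.} Your estimate $\eta m\le4/3+\eta$ gives $(1+\rho)\eta m<4$ only for $\eta\le2/3$. That covers every use in the paper, where $\eta<1/4$, but the lemma is stated for $0<\eta<1$, so the range $\eta>2/3$ must be closed. This is easy. Split $V(J)$ into two parts of sizes $\lfloor n/2\rfloor$ and $\lceil n/2\rceil$. Since $n\ge4/\eta>4$, we have $\lceil n/2\rceil\le(n+1)/2\le2n/3<\eta n$, so $m=2$ is admissible and $(1+\rho)\eta m=2(1+\rho)\eta<4$.

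The target $\eta m\le1+\eta$ you proposed is not attainable uniformly. For example, if $\eta n$ is just below $5$ and $n$ is large, the parts have size at most $4$, so $\eta m\ge\eta n/4\approx5/4>1+\eta$. It is not needed, though: your crude bound already handles every $\eta\le2/3$, so the single split at $\eta=2/3$ finishes the proof.
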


Our second ingredient supplies the probability estimate needed to avoid the connected density violation events in an eventual local lemma application. 

\begin{lemma}[Local bound for connected density events]\label{lem:connected-density}
Let $D\geq1$, $\eta,\rho\in(0,1/4)$ and $a=\rho^2\eta/64$ and suppose $K$ is an integer satisfying
$$
 K\geq\frac{32}{\rho\eta},
 \quad 16DK\exp(-aK)\leq\frac12, \quad \text{and}
 \quad 8\exp(-aK)\leq\frac18.
$$
Let $p\in(0,1]$, $pn\geq2K$, $0<c\leq\rho\eta/8$, and let $J$ be is an $n$-vertex graph satisfying
$$
 \Delta(J)\leq Dpn
 \quad \text{and} \quad
 e_J(U,W)\leq p|U||W|+cpn\sqrt{|U||W|}
 \quad \text{whenever}\;\; U,W\subseteq V(J).
$$
For percolation rate $\theta=K/(pn)$ and the events~\eqref{eq:connected-event}, the inequality 
\begin{equation}\label{eq:connected-density-budget}
 \sum_{\substack{U,W\ne\varnothing,\ |U|+|W|\leq2\eta n\\
                  v\in U\cup W}}
 \left(\frac43\right)^{|U\cup W|}\PP[T_{U,W}]
 \leq8\exp(-aK)\leq\frac18
\end{equation}
holds for every $v\in V(J)$.
\end{lemma}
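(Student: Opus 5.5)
We follow the outline given in Component~II. Fix $v$ and write $z=|U\cup W|$. If $T_{U,W}$ occurs, then $R[U\cup W]$ is connected, so it contains a retained spanning tree $\mathcal T$ on $U\cup W$ that contains $v$. We therefore bound
$$
 \PP[T_{U,W}]\leq\sum_{\mathcal T}\PP\bigl[\mathcal T\subseteq R\text{ and } e_R(U,W)>(1+\rho)\eta K z'\bigr],
 \qquad z'\coloneqq\tfrac{|U|+|W|}{2}\geq\tfrac z2,
$$
where the sum runs over the trees $\mathcal T\subseteq J$ on the vertex set $U\cup W$ with $v\in V(\mathcal T)$.

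We then reorganise the whole sum in \eqref{eq:connected-density-budget} by first choosing the tree and then the pair $(U,W)$. For the tree, we count trees in $J$ on $z$ vertices containing $v$ via an encoding: a rooted tree shape (at most $4^{z}$ choices) embedded greedily from $v$, with at most $\Delta(J)\leq Dpn$ choices per vertex. This gives at most $(4Dpn)^{z-1}$ trees, and each is retained with probability $\theta^{z-1}=(K/(pn))^{z-1}$. For the pair, given the vertex set $V(\mathcal T)$, each vertex lies in $U$ only, in $W$ only, or in both, so there are at most $3^z$ pairs.

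It remains to bound the conditional probability of the density excess given $\mathcal T\subseteq R$. The other indicators stay independent, and the tree contributes at most $2(z-1)$ to the ordered count. Because $K\geq 32/(\rho\eta)$, we have $2z\leq \rho\eta K z'/4$, so the remaining edges must contribute more than $(1+3\rho/4)\eta K z'$.

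Their expectation is at most $\theta e_J(U,W)\leq \theta\bigl(p|U||W|+cpn\sqrt{|U||W|}\bigr)$. Using $|U|,|W|\leq 2\eta n$, AM--GM and $c\leq\rho\eta/8$, this is at most roughly $K(\eta+\rho\eta/8)\cdot 2z'$. This is where the bound seems off by a factor of two. I would therefore use the sharper estimate $p|U||W|\leq p\cdot\frac{(|U|+|W|)^2}{4}\leq p\,\eta n\,z'$, which gives
$$
 \mathbb{E}\leq K\eta z'\Bigl(1+\frac{\rho}{8}\Bigr).
$$
The required excess is then a $(1+\rho/2)$-factor above a quantity of order $K\eta z'$. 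A Chernoff bound for sums of independent indicators (the ordered count is at most twice a binomial-type sum, so we apply it to $e_R/2$) bounds the conditional probability by $\exp(-\rho^2\eta K z'/c_0)$. With $z'\geq z/2$ this is at most $\exp(-aKz)$ for $a=\rho^2\eta/64$.

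Collecting the factors, the total is
$$
\sum_{z\geq1}(4/3)^z\,3^z\,(4DK)^{z-1}e^{-aKz}\leq 4e^{-aK}\sum_{z\geq 1}\bigl(16DKe^{-aK}\bigr)^{z-1}\leq 8e^{-aK},
$$
using $16DKe^{-aK}\leq 1/2$; then $8e^{-aK}\leq 1/8$ by hypothesis. Since $z\leq 2\eta n$ and $pn\geq 2K$, we have $\theta\leq 1/2$, which keeps the Chernoff step valid.

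The main obstacle is the Chernoff step. Getting constants that genuinely produce $\exp(-aKz)$ with $a=\rho^2\eta/64$ requires the expectation bound to sit at $\eta Kz'(1+\rho/8)$ rather than a constant multiple of it. I would try first the $(|U|+|W|)^2/4$ bound together with the $\sqrt{|U||W|}\leq z'$ bound for the bijumbledness term. I would also handle the double counting of edges in $U\cap W$ by applying Chernoff to the weighted sum with weights in $\{1,2\}$, via a multiplicative bound for bounded variables.
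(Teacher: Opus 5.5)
Your proposal is correct and follows the paper's proof essentially step for step. The shared steps are: a union bound over retained spanning trees through $v$; the count $3^z(4\Delta(J))^{z-1}$ of (tree, pair) witnesses; the conditional mean bound $(1+\rho/8)\eta K(|U|+|W|)/2$ via $|U||W|\leq(|U|+|W|)^2/4$; a weighted Chernoff bound with $\delta=\rho/2$ giving $\exp(-aKz)$; and the geometric series with ratio $16DKe^{-aK}$. The only slip is the parenthetical ``$4^z$'' for the number of shapes, which must be $4^{z-1}$ (as your $(4Dpn)^{z-1}$ already uses) to reach the stated constant; summing from $z\geq1$ rather than the paper's $z\geq2$ just yields $8e^{-aK}$ instead of $4e^{-aK}$.
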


We have arrived at the third and last ingredient which maintains the spread measure despite conditioning on a potentially rare event. An event is \emph{supported on} a set of edge coordinates if those retention indicators determine whether it occurs. It is \emph{increasing} if changing a coordinate from zero to one cannot destroy it, and \emph{decreasing} if such a change cannot create it. A chosen support need not be minimal.

% In the following lemma, the increasing events represent the configurations to be excluded, whilst the degree requirement prevents excessive deletion. The margin below the expected degree makes each degree failure sufficiently unlikely. The local weighted bound limits the combined effect of the increasing events involving any one vertex. Together, these conditions permit conditioning on simultaneous avoidance without losing control of edge-set containment probabilities.

\begin{lemma}[Conditioning whilst controlling spread]\label{lem:monotone-conditioning}
Let $J$ be a graph and let $R\subseteq J$ be a $\theta$-rate percolation thereof with $\theta\in(0,1)$. Fix $t\geq0$ and put $m_v=\theta \deg_J(v)$ for each $v\in V(J)$. Suppose that $m_v>0$ and
\begin{equation}\label{eq:conditioning-margin}
 m_v-t\geq\sqrt{2m_v\log8}
 \quad \text{whenever} \; v\in V(J).
\end{equation}
Let $(B_i)_{i\in I}$ be a finite family of increasing events in the edge-retention indicators. For each $i\in I$, choose a support $T_i\subseteq E(J)$ for $B_i$ and a nonempty set $Z_i\subseteq V(J)$ containing both endpoints of every edge in $T_i$. Suppose that
\begin{equation}\label{eq:conditioning-local-budget}
 \sum_{i \in I: v\in Z_i}
 \left(\frac43\right)^{|Z_i|}\PP[B_i]
 \leq\frac14
 \qquad \text{whenever}\; v\in V(J).
\end{equation}
Define the (good) event intersection 
$$
 \mathcal E=
 \bigcap_{v\in V(J)}\{\deg_R(v)\geq t\}
 \cap\bigcap_{i\in I}B_i^c.
$$
Then, $\PP[\mathcal E]>0$ and, for every $S\subseteq E(J)$,
\begin{equation}\label{eq:conditioning-containment}
 \PP\left[S\subseteq E(R)\,\middle|\,\mathcal E\right]
 \leq\theta^{|S|}\left(\frac43\right)^{|V(S)|},
\end{equation}
where $V(S)$ is the set of endpoints of the edges in $S$.
\end{lemma}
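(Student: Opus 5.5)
The plan is to derive Lemma~\ref{lem:monotone-conditioning} from the lopsided local lemma~\cite{ES1991}, combined with an additional-event estimate in the spirit of Haeupler, Saha and Srinivasan~\cite{HSS2011}. The bad events are of two kinds:
\begin{itemize}
\item the decreasing degree events $D_v=\{\deg_R(v)<t\}$, supported on the star of $v$, with associated vertex set $\{v\}$;
\item the increasing events $B_i$, supported on $T_i$, with associated vertex set $Z_i$.
\end{itemize}
The dependency graph joins $D_v$ and $B_i$ whenever $v\in Z_i$. It adds no other edges: two $D$'s are never joined, and two $B$'s are never joined. If $T_i$ meets the star of $v$, then $v$ is an endpoint of an edge of $T_i$, hence $v\in Z_i$. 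So every pair with overlapping supports and opposite monotonicity is joined.

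\textbf{Lopsidependency.} The first step is to verify the lopsided condition $\PP[A\mid\bigcap_{B\in\mathcal N}B^c]\leq\PP[A]$ for each bad event $A$ and each family $\mathcal N$ of non-neighbours of $A$.

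Take $A=D_v$. Condition on all coordinates outside the star of $v$. Every $B_i\in\mathcal N$ has $v\notin Z_i$, so its support avoids the star; hence $B_i^c$ becomes deterministic. Every $D_u^c\in\mathcal N$ becomes an increasing function of the star coordinates. Harris' inequality~\cite{Harris1960}, applied to the decreasing event $D_v$, then gives $\PP[D_v\mid\cdot]\leq\PP[D_v]$ pointwise. Averaging over the conditioning yields the claim.

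The case $A=B_i$ is symmetric. Condition on the coordinates outside $T_i$. The events $D_v$ with $v\notin Z_i$ are then fixed, and the complements $B_j^c$ are decreasing. Since $B_i$ is increasing, Harris' inequality applies again. I expect this measure-theoretic step, a conditional Harris argument handling mixed monotone intersections, to be the only delicate point. The rest is bookkeeping.

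\textbf{Weights.} I will set $x(D_v)=1/4$ and $x(B_i)=2(4/3)^{|Z_i|}\PP[B_i]$. By~\eqref{eq:conditioning-local-budget}, $\sum_{i:v\in Z_i}x(B_i)\leq1/2$; in particular each $x(B_i)\leq1/2$, so $\prod_{i:v\in Z_i}(1-x(B_i))\geq1/2$.
\begin{itemize}
\item For $D_v$: a Chernoff lower-tail bound together with~\eqref{eq:conditioning-margin} gives $\PP[D_v]\leq\exp(-(m_v-t)^2/(2m_v))\leq1/8$. The right-hand side of the local-lemma inequality is at least $\tfrac14\cdot\tfrac12=1/8$, so the inequality holds.
\item For $B_i$: the neighbours are the $D_v$ with $v\in Z_i$. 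The required inequality is $\PP[B_i]\leq x(B_i)(3/4)^{|Z_i|}=2\PP[B_i]$, which holds trivially.
\end{itemize}
The lopsided local lemma then yields $\PP[\mathcal E]\geq\prod(1-x)>0$.

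\textbf{Spread.} Fix $S$ and set $A_S=\{S\subseteq E(R)\}$, an increasing event supported on $S$. Its decreasing "neighbours" are exactly the $D_v$ with $v\in V(S)$. I will prove the additional-event estimate
$$
 \PP[A_S\mid\mathcal E]\leq\PP[A_S]\prod_{v\in V(S)}(1-x(D_v))^{-1}.
$$
Write $\mathcal E=\mathcal E_1\cap\mathcal E_2$, where $\mathcal E_1=\bigcap_{v\in V(S)}D_v^c$ and $\mathcal E_2$ is the intersection of the remaining complements. Then
$$
 \PP[A_S\mid\mathcal E]\leq\frac{\PP[A_S\cap\mathcal E_2]}{\PP[\mathcal E_2]}\cdot\frac{\PP[\mathcal E_2]}{\PP[\mathcal E]}.
$$
\begin{itemize}
\item The first factor is at most $\PP[A_S]$. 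Conditioning on the coordinates outside $S$, as above, leaves $A_S$ increasing while every remaining event is either fixed or a complement of some $B_j$, which is decreasing; Harris' inequality finishes.
\item The second factor is $1/\PP[\mathcal E_1\mid\mathcal E_2]$. The standard local-lemma induction, ordering the $D_v$ with $v\in V(S)$ and peeling them off one at a time, gives $\PP[\mathcal E_1\mid\mathcal E_2]\geq\prod_{v\in V(S)}(1-x(D_v))=(3/4)^{|V(S)|}$.
\end{itemize}
Combining the two factors with $\PP[A_S]=\theta^{|S|}$ gives~\eqref{eq:conditioning-containment}.
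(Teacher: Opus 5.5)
Your proposal is correct and is essentially the paper's proof. It uses the same decreasing degree events with weight $1/4$, and a lopsided dependency graph justified by fixing the coordinates outside a support and applying Harris' inequality (the paper's Lemma~\ref{lem:monotone-lopsided}). The additional-event bound is obtained the same way: separate the degree events at $V(S)$, use Harris for $\PP[A_S\mid\cdot]\leq\PP[A_S]$, and peel those degree events off with the conditional local lemma (the paper's Lemma~\ref{lem:extra-event}). The differences are harmless: you double $x(B_i)$, so the degree-event criterion holds exactly at $1/8$ rather than with the paper's $3/16$ margin, and you join $D_v$ to $B_i$ whenever $v\in Z_i$ rather than only when $T_i$ meets the star of $v$. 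You should also discard the events with $\PP[B_i]=0$ (they are empty since $0<\theta<1$), as the paper does, or use a local lemma allowing $x_i\in[0,1)$.
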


\begin{remark}
Insights and explanation as to the formulation of Lemma~\ref{lem:monotone-conditioning} can be found in Section~\ref{sec:conditional-tools}. 
\end{remark}

\medskip
Lemmata~\ref{lem:connected-reduction}, ~\ref{lem:connected-density}, and~\ref{lem:monotone-conditioning} are proved in Sections~\ref{sec:connected-reduction-proof}, ~\ref{sec:connected-density-proof}, and~\ref{sec:conditional-tools}, respectively. 
We proceed with Lemma~\ref{lem:deterministic-carrier} from said lemmata. 

\begin{proof}[Proof of Lemma~\ref{lem:deterministic-carrier}]
Fix $J$ and $\Gamma$ as in the lemma, put $\theta=K/(pn)$ and $t=(1/2+\eps/4)K$, and obtain the $\theta$-rate percolation $R\subseteq J$. Since $pn\geq8K$, we have $0<\theta\leq1/8$. We seek to apply Lemma~\ref{lem:monotone-conditioning} to the the sequence of increasing events of the form $A_{e,f}$ - indicating that a conflicted pairs of edges is retained in $R$ - events of the form $T_{U,W}$, which are the connected density violations events just introduced. We commence with the verification of the hypotheses of the premise of Lemma~\ref{lem:monotone-conditioning}.

The degree assumptions in~\eqref{eq:deterministic-host} imply
$$
 \left(\frac12+\frac\eps2\right)K\leq m_v\leq DK
 \quad \text{as well as} \quad m_v-t\geq\frac{\eps K}{4}>0.
$$
Consequently, the choice of $K$ in~\eqref{eq:carrier-K-choice} ensures that
$$
 \frac{(m_v-t)^2}{2m_v}
 \geq\frac{\eps^2K}{32D}\geq\log8.
$$
Taking square roots verifies~\eqref{eq:conditioning-margin} at every vertex.

For each $\{e,f\}\in E(\Gamma)$, use the conflict event $A_{e,f}=\{e,f\in E(R)\}$, with chosen support $\{e,f\}$ and vertex set $Z_{e,f}=V(\{e,f\})$. This event is increasing and has probability $\theta^2$, since $e$ and $f$ are distinct. Its vertex set has size at most four. Also use every connected density event $T_{U,W}$, with vertex set $Z_{U,W}=U\cup W$ and support consisting of the edges of $J$ contributing to $e_J(U,W)$. This support determines connectedness as well as the ordered edge count, and all its endpoints belong to $Z_{U,W}$. These events are increasing as well, by the observation following~\eqref{eq:connected-event}. Different indices are retained even if two events coincide.

Fix $v\in V(J)$. A conflict event whose chosen vertex set contains $v$ includes an edge incident to $v$. Choosing that edge and then one of its conflicting partners counts at most $\deg_J(v)\Delta(\Gamma)$ events, with possible repetitions. Thus, their weighted sum satisfies
\begin{align*}
 \sum_{\substack{\{e,f\}\in E(\Gamma):\\v\in Z_{e,f}}}
 \left(\frac43\right)^{|Z_{e,f}|}\PP[A_{e,f}]
 &\leq \deg_J(v)\Delta(\Gamma)a_0\theta^2\\
 &\leq D\mu(pn)^2a_0\left(\frac{K}{pn}\right)^2
 =D\mu a_0K^2\leq\frac18.
\end{align*}
Here, $a_0=(4/3)^4$, and the last inequality is the assumed upper bound on $\mu$. To see that~\eqref{eq:conditioning-local-budget} holds, note that the hypotheses of Lemma~\ref{lem:connected-density} follow from~\eqref{eq:carrier-K-choice}, the bounds on $c$ and $pn$, and~\eqref{eq:deterministic-host}. Its conclusion bounds the total weight of the density events involving $v$ by another $1/8$. Adding these two estimates verifies~\eqref{eq:conditioning-local-budget}.

Let $\mathcal E$ be the event that $\deg_R(v)\geq t$ holds for every vertex $v$ and that none of the conflict or connected density events occurs; Lemma~\ref{lem:monotone-conditioning} then asserts that $\PP[\mathcal E]>0$. Define
$$
 \lambda[\mathcal A]=\PP[R\in\mathcal A\mid\mathcal E]
 \qquad(\mathcal A\text{ a family of spanning subgraphs of }J).
$$
Every graph $F$ in the support of $\lambda$ has minimum degree at least $t$ and contains no conflicting pair. Avoidance of the events $T_{U,W}$ is precisely the connected-pair hypothesis of Lemma~\ref{lem:connected-reduction}. Applying that lemma gives~\eqref{eq:carrier-strong-density}; its additional assumptions $n\geq4/\eta$ and $0<\eta,\rho<1$ also hold, so $e(F)\leq2Kn$. Restricting~\eqref{eq:carrier-strong-density} to equal-sized sets proves the asserted $(\eta,\rho,K/n)$-sparsity.

Finally,~\eqref{eq:conditioning-containment} gives, for every $S\subseteq E(J)$,
$$
 \lambda[\{F\mid S\subseteq E(F)\}]
 \leq\left(\frac{K}{pn}\right)^{|S|}
       \left(\frac43\right)^{|V(S)|}
 \leq\left(\frac{16K}{9pn}\right)^{|S|}.
$$
The final inequality uses $|V(S)|\leq2|S|$; it also holds when $S$ is empty, in which case both sides equal one. This establishes every conclusion of Lemma~\ref{lem:deterministic-carrier}.
\end{proof}

\subsubsection{From connected density violations to global density bounds}\label{sec:connected-reduction-proof}
In this section, we prove Lemma~\ref{lem:connected-reduction}. The argument has two steps. First, any pair of sets violating the density bound has a connected component which still violates it, because both the ordered edge count and the threshold add over components. Second, we apply the resulting density bound to pairs $(U,U)$
and average over all sets $U\subseteq V(J)$ of size
$\lfloor\eta n\rfloor$, obtaining $e(F)\leq2Kn$.

\begin{proof}[Proof of Lemma~\ref{lem:connected-reduction}]
Suppose that the asserted density inequality fails for some $U,W\subseteq V(J)$ with $|U|+|W|\leq2\eta n$. Both sets are nonempty, since the edge count is zero if either is empty. Decompose $J_{U,W}(F)$ into connected components with vertex sets $Z_1,\ldots,Z_\ell$, and put $U_i=U\cap Z_i$ as well as $W_i=W\cap Z_i$.

Every edge counted by $e_F(U,W)$ belongs to exactly one component, with its multiplicity unchanged. Moreover, each membership of a vertex in $U$ or $W$ is assigned to exactly one component. In particular, a vertex in $U\cap W$ contributes twice to the original size sum and twice to the corresponding component size sum. Hence,
$$
 e_F(U,W)=\sum_{i=1}^{\ell}e_F(U_i,W_i) \quad \text{and}
 \quad
 |U|+|W|=\sum_{i=1}^{\ell}(|U_i|+|W_i|).
$$
If every component satisfied the proposed bound, summing those bounds would contradict the assumed violation. Consequently, some index $i$ satisfies
$$
 e_F(U_i,W_i)>(1+\rho)\eta K\frac{|U_i|+|W_i|}{2}.
$$
For this index, $U_i$ and $W_i$ are nonempty, and their total size is at most $2\eta n$. Their union is $Z_i$, and $J_{U_i,W_i}(F)$ is exactly the connected component on $Z_i$. This contradicts the connected-pair hypothesis and proves the upper-density inequality for all eligible pairs.

For the edge count, suppose also that $n\geq4/\eta$ and $0<\eta,\rho<1$. Put $s=\lfloor\eta n\rfloor$ and choose a uniformly random $s$-element set $U\subseteq V(J)$, with $F$ fixed. Here, $2\leq s\leq n$. An edge of $F$ has both endpoints in $U$ with probability $s(s-1)/(n(n-1))$, and then contributes twice to $e_F(U,U)$. Applying the density bound to $(U,U)$ and taking expectations, we obtain
$$
 2e(F)\frac{s(s-1)}{n(n-1)}
 =\EE[e_F(U,U)]\leq(1+\rho)\eta Ks.
$$
Since $s-1\geq\eta n-2\geq\eta n/2$, division and substitution yield
$$
 e(F)\leq\frac{(1+\rho)\eta K n(n-1)}{2(s-1)}
 \leq(1+\rho)K(n-1)\leq2Kn.
$$
This proves the edge-count conclusion.
\end{proof}

\subsubsection{Controlling the weights of connected density violation events}\label{sec:connected-events}\label{sec:connected-density-proof}
In this section, we prove Lemma~\eqref{lem:connected-density}, whose main aim is the inequality~\ref{eq:connected-density-budget}. Closer inspection of the latter reveals that, in fact, this inequality ranges over events $T_{U,W}$ and bounds certain local lemma weights that are to be assigned to these in an eventual local lemma application. As such, this bound is akin to a union-bound argument and indeed this is essentially the proof of Lemma~\ref{lem:connected-density}. 

We capture this union-bound like argument through two lemmata. The first - Lemma~\ref{lem:fixed-tree-density-tail} - is in charge on bounding a single summand in the union-bound; the second - Lemma~\ref{lem:tree-pair-witness-count} - bounds the number of summands in said union-bound. 

Subtlety now enters the argument. Insistence on connectivity for the events in question allows us to use spanning trees, which are easily enumerated, in order to enumerate the number of summands involved; this enumeration is made easy further still owing to an upper bound imposed on $\Delta(J)$ which we prepared in advance. All this can be seen in the proof of Lemma~\ref{lem:tree-pair-witness-count}. Next, in order to claim that each summand is appropriately small, we prove that conditioned on a specific tree existing in $R$ - the retained percolated subgraph of $J$ - a density violation would imply that the random variable $e_R(U,W)$ would have to exceed its expectation just enough as to allow us access to Chernoff's inequality; this is seen in Lemma~\ref{lem:fixed-tree-density-tail}  

% which supplies the density estimate used in the preceding proof of Lemma~\ref{lem:deterministic-carrier}. A connected violation contains a retained spanning tree. We start by bounding the probability of a density violation after fixing and retaining such a tree;  then, we proceed to count the trees together with the vertex sets these can witness. Below, we first state these two estimates, then use these to deduce Lemma~\ref{lem:connected-density}, and finally prove each estimate separately.

% We retain the notation for admissible pairs and connected density events from~\eqref{eq:connected-event}. Every connected graph contains a spanning tree; this tree provides a smaller retained configuration that can be counted before estimating the remaining density excess. If $T_{U,W}$ occurs, some spanning tree of the fixed graph $J_{U,W}(J)$ has all its edges retained in $R$. Our first ingredient estimates the probability of a density violation conditional on retaining any one such tree. The tree is chosen in the fixed graph before conditioning; its retention leaves all other edge indicators independent.

\begin{lemma}[Density excess after retaining a fixed tree]\label{lem:fixed-tree-density-tail}
Let $\eta,\rho\in(0,1/4)$, put $a=\rho^2\eta/64$, and let $K$ be an integer satisfying $K\geq32/(\rho\eta)$. Suppose that $p\in(0,1]$, $pn\geq2K$, and $0<c\leq\rho\eta/8$. Let $J$ be an $n$-vertex graph satisfying
$$
 e_J(A,B)\leq p|A||B|+cpn\sqrt{|A||B|}
 \qquad \text{whenever}\; A,B\subseteq V(J),
$$
and obtain $R\subseteq J$ through a $\theta$-rate percolation, where $\theta=K/(pn)$. For an admissible pair $(U,W)$, put $z=|U\cup W|$ and suppose that $z\geq2$. If $Q$ is any fixed spanning tree of $J_{U,W}(J)$, then
\begin{equation}\label{eq:connected-tree-tail}
 \PP\left[
 e_R(U,W)>(1+\rho)\eta K\frac{|U|+|W|}{2}
 \,\middle|\,E(Q)\subseteq E(R)\right]
 \leq\exp(-aKz).
\end{equation}
\end{lemma}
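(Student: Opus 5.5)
Set $s\coloneqq(|U|+|W|)/2$, so that $z/2\leq s\leq z$ and $s\leq\eta n$. Write $e_R(U,W)=Y_Q+Y$. Here $Y_Q$ is the contribution of the edges of $Q$, and $Y$ is the contribution of the remaining edges of $J$ counted by $e_J(U,W)$. Each edge counts once or twice in the ordered count, depending on whether both endpoints lie in $U\cap W$.

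The first step bounds $Y_Q$ deterministically. The tree $Q$ has $z-1$ edges, each contributing at most $2$, so $Y_Q\leq 2z\leq 4s$. Since $K\geq32/(\rho\eta)$, we get $4s\leq \rho\eta K s/8$, which is a small fraction of the threshold $(1+\rho)\eta Ks$.

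Conditioning on $E(Q)\subseteq E(R)$ leaves the other indicators independent Bernoulli$(\theta)$. The second step bounds the mean of $Y$ via the density hypothesis:
$$\EE[Y]\leq\theta e_J(U,W)\leq\frac{K}{pn}\bigl(p|U||W|+cpn\sqrt{|U||W|}\bigr)\leq K\Bigl(\frac{s^2}{n}+cs\Bigr)\leq K s(\eta+c),$$
using $\sqrt{|U||W|}\leq s$ and $s\leq\eta n$. With $c\leq\rho\eta/8$ this gives $\EE[Y]\leq(1+\rho/8)\eta Ks$. A violation therefore forces
$$Y>(1+\rho)\eta Ks-\tfrac{\rho}{8}\eta Ks\geq \EE[Y]+\tfrac{3\rho}{4}\eta Ks.$$
To avoid dividing by a possibly small mean, I would compare $Y$ against the fixed level $\mu^*\coloneqq(1+\rho/8)\eta Ks\geq\EE[Y]$.

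The third step is a Chernoff bound, which is the main technical point. Since $Y$ is a sum of weights in $\{1,2\}$ times independent indicators, I would apply Chernoff to $Y/2$, a sum of $[0,1]$-valued independent variables. The upper tail holds with any upper bound $\mu^*$ on the mean, by monotonicity of the MGF argument:
$$\PP[Y\geq(1+\delta)\mu^*]\leq\exp\Bigl(-\frac{\delta^2\mu^*}{2(2+\delta)}\Bigr).$$
Here $\delta\mu^*=\tfrac{3\rho}{4}\eta Ks$, so $\delta\geq\rho/2$ and $\delta\leq1$. The exponent is then at least of order $\rho^2\eta Ks/(8\cdot 3)\cdot\tfrac{9}{16}$, which is comfortably at least $\rho^2\eta K s/32\geq\rho^2\eta Kz/64=aKz$ by $s\geq z/2$. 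The conditional probability is thus at most $\exp(-aKz)$.

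I expect the main care to be in bookkeeping the factor-$2$ multiplicities and the constant in the Chernoff exponent so that it lands at $\rho^2\eta/64$. The fallback, if the constants are tight, is to use the cruder $\exp(-\delta^2\mu^*/6)$ form for $\delta\leq1$, which still leaves slack.
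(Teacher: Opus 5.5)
Your proposal is correct and follows essentially the same route as the paper: split off the at most $2(z-1)$ contribution of the retained tree, absorb it using $K\geq32/(\rho\eta)$, bound the conditional mean of the independent remainder by $(1+\rho/8)$ times the base level via the upper-density hypothesis, and apply a Chernoff bound against that fixed upper bound on the mean. The paper instead uses a weighted Chernoff bound with weights in $[0,2]$ and $\delta=\rho/2$; your rescaling to $Y/2$ and different normalisation of $s$ are only cosmetic differences.

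One small slip: your displayed value $\rho^2\eta Ks\cdot\frac{9}{16}/(8\cdot3)=3\rho^2\eta Ks/128$ is actually smaller than $\rho^2\eta Ks/32$. The correct evaluation $\delta^2\mu^*/6\geq\frac{\rho}{2}\cdot\frac{3\rho}{4}\eta Ks/6=\rho^2\eta Ks/16$ still yields the required bound $\exp(-aKz)$ with room to spare.
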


The second ingredient counts the choices over which this estimate is summed. We count a tree together with its pair of vertex sets, because the embedded tree already determines their union. This avoids a separate factor for choosing the vertices of a violation.

\begin{lemma}[Counting tree witnesses through a vertex]\label{lem:tree-pair-witness-count}
Let $J$ be a finite graph, let $v\in V(J)$, and let $z\geq2$ be an integer. The number of triples $(Q,U,W)$ in which $Q\subseteq J$ is a tree on $z$ vertices containing $v$, and $U,W$ are nonempty subsets of $V(Q)$ with $U\cup W=V(Q)$, is at most
$$
 3^z\bigl(4\Delta(J)\bigr)^{z-1}.
$$
The same bound holds if we additionally require $(U,W)$ to be
admissible and every edge of $Q$ to have one endpoint in $U$
and the other in $W$.
\end{lemma}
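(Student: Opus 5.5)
The count splits into two independent factors. There are at most $(4\Delta(J))^{z-1}$ trees $Q\subseteq J$ on $z$ vertices containing $v$. For each such tree, there are at most $3^z$ pairs $(U,W)$. The pair bound is immediate: since $U\cup W=V(Q)$, each vertex of $Q$ lies in $U\setminus W$, in $W\setminus U$, or in $U\cap W$. So $(U,W)$ is determined by a map $V(Q)\to\{1,2,3\}$, which gives at most $3^z$ pairs.

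For the tree count, I would encode each tree by a closed walk from $v$ that traverses the tree depth-first. Root $Q$ at $v$ and perform a depth-first traversal. This is a closed walk of length $2(z-1)$ using each edge of $Q$ exactly twice. Record the walk as a sequence of $2(z-1)$ steps. Each step is either a move to a new child, specified by which of at most $\Delta(J)$ neighbours in $J$ of the current vertex it is, or a move back to the parent, which needs no further data.

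The tree is recovered from this sequence. The up/down pattern is a Dyck word of length $2(z-1)$, and there are at most $2^{2(z-1)}=4^{z-1}$ such words. There are exactly $z-1$ down-steps, and each carries one of at most $\Delta(J)$ choices. The number of trees is therefore at most $4^{z-1}\Delta(J)^{z-1}=(4\Delta(J))^{z-1}$. The map from rooted trees with a chosen ordering to sequences is injective. Every tree $Q\ni v$ admits at least one such traversal, so the number of trees is at most the number of sequences. Multiplying by the factor $3^z$ gives the stated bound $3^z(4\Delta(J))^{z-1}$.

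The strengthened version, in which $(U,W)$ must be admissible and every edge of $Q$ must run between $U$ and $W$, counts a subset of the same triples, so the same bound applies. The only step needing care is the injectivity of the traversal encoding. To make it explicit, fix a labelling of each vertex's neighbourhood in $J$ by $\{1,\dots,\Delta(J)\}$, so that a down-step is recorded as a label. Replaying the sequence from $v$, using a stack to track the current path, then reconstructs the tree. I do not expect any genuine obstacle.
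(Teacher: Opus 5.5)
Your proposal is correct and is essentially the paper's proof. You get the factor $3^z$ by assigning each vertex of $V(Q)$ to $U\setminus W$, $W\setminus U$ or $U\cap W$, and the factor $(4\Delta(J))^{z-1}$ from a depth-first up/down word of length $2(z-1)$ with at most $\Delta(J)$ neighbour choices per down-step, and the restricted version follows because it counts a subset of the same triples. The only difference is presentational: the paper phrases the tree encoding as a rooted ordered shape together with a parent-before-child placement, which is the same encoding.
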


We now deduce Lemma~\ref{lem:connected-density} from these two ingredients. Their proofs appear in Sections~\ref{sec:connected-tree-tail-proof} and~\ref{sec:tree-pair-count-proof}, respectively.

\begin{proof}[Proof of Lemma~\ref{lem:connected-density}]
Fix $v\in V(J)$. We first estimate the probability of one connected violation by summing over its possible retained spanning trees, and then use Lemma~\ref{lem:tree-pair-witness-count} to sum over all admissible pairs involving $v$.

If $|U\cup W|=1$, then $e_R(U,W)=0$, since $J$ has no loops; hence, $T_{U,W}$ is empty. Consider an admissible pair with $v\in U\cup W$ and $z=|U\cup W|\geq2$, and let $\mathcal Q(U,W)$ denote the set of spanning trees of $J_{U,W}(J)$. Whenever $T_{U,W}$ occurs, at least one $Q\in\mathcal Q(U,W)$ has all its edges in $R$. For each fixed $Q$, independence implies
$$
 \PP[E(Q)\subseteq E(R)]=\theta^{z-1}>0.
$$
Every hypothesis of Lemma~\ref{lem:fixed-tree-density-tail} is included in the present assumptions. Applying~\eqref{eq:connected-tree-tail} after conditioning on the retention of $Q$, and then taking a union bound over $\mathcal Q(U,W)$, we obtain
\begin{equation}\label{eq:connected-tree-union}
 \PP[T_{U,W}]
 \leq\sum_{Q\in\mathcal Q(U,W)}
       \theta^{z-1}\exp(-aKz).
\end{equation}
If $\mathcal Q(U,W)$ is empty, the event is empty as well, so the inequality remains valid.

For each fixed $z$, summing~\eqref{eq:connected-tree-union} over the admissible pairs with $v\in U\cup W$ counts triples of the kind bounded in Lemma~\ref{lem:tree-pair-witness-count}. A violation with several retained spanning trees may be counted more than once; this is permitted in an upper bound. Multiplying by the prescribed weight and grouping according to $z$, we write
\begin{align*}
 \sum_{\substack{U,W\ne\varnothing,\ |U|+|W|\leq2\eta n:\\
                  v\in U\cup W}} &
 \left(\frac43\right)^{|U\cup W|}\PP[T_{U,W}]\\
 &\qquad\leq\sum_{z=2}^{n}
    3^z\bigl(4\Delta(J)\bigr)^{z-1}
    \theta^{z-1}\left(\frac43\right)^z\exp(-aKz)\\
 &\qquad\leq4\exp(-aK)\sum_{z\geq2}
    \bigl(16DK\exp(-aK)\bigr)^{z-1},
\end{align*}
where the last inequality relies on 
$$\Delta(J)\theta\leq Dpn\cdot K/(pn)=DK \quad \text{as well as} \quad 3^z(4/3)^z=4^z.
$$
Thus, the factor involving $pn$ in the witness count is cancelled by the probability of retaining its tree edges.

Put $r=16DK\exp(-aK)$. By hypothesis, $0<r\leq1/2$, so
$$
 \sum_{z\geq2}r^{z-1}=\frac{r}{1-r}\leq1.
$$
The weighted sum is therefore at most $4\exp(-aK)$. Since $4\exp(-aK)\leq8\exp(-aK)\leq1/8$, this proves~\eqref{eq:connected-density-budget} for every choice of $v$.
\end{proof}

\begin{remark}
The use of spanning trees to organise union bounds over connected
configurations has precedents. Frieze, Krivelevich
and Martin~\cite{FKM2004} combine tree enumeration with retention
probabilities to control component sizes in random subgraphs of
pseudorandom graphs. A particularly close application appears in
Diskin and Krivelevich~\cite[Lemma~2.5]{DK2024}, who use this
approach to exclude connected sets with excessively many incident
edges. Here, we apply the same underlying principle to bound
the weighted sum of connected density violations involving each
fixed vertex, as required by the local lemma.    
\end{remark}

\subsubsubsection{The density estimate after retaining a fixed tree}\label{sec:connected-tree-tail-proof}
In this section, we prove Lemma~\ref{lem:fixed-tree-density-tail}. Once the edges of a fixed tree have been retained, all other edge indicators remain independent. We show that the tree contributes only a small part of the density threshold, whilst the remaining contribution must exceed an upper bound on its mean by a fixed proportion. We require the following weighted multiplicative Chernoff bound. 

\begin{lemma}\label{lem:bounded-weight-tail} {\em~\cite[Theorem~1.1]{DubhashiPanconesi2009}}
Let $N\geq0$ be an integer, let $X_1,\ldots,X_N$ be independent Bernoulli random variables, let $b_1,\ldots,b_N\in[0,2]$, and put $Y=\sum_{i=1}^{N}b_iX_i$. If $m>0$ satisfies $\EE[Y]\leq m$, then, for every $0<\delta\leq1$,
$$
 \PP[Y\geq(1+\delta)m]
 \leq\exp\left(-\frac{\delta^2m}{8}\right).
$$
\end{lemma}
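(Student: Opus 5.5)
The plan is to reduce to the standard multiplicative Chernoff bound for sums of independent $[0,1]$-valued variables, after rescaling the weights. I would write out the short moment-generating-function argument rather than rely only on the citation, because the version needed here allows $m$ to be any upper bound on $\EE[Y]$ rather than the exact mean.

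\textbf{Setup.} If $N=0$, then $Y=0<(1+\delta)m$ and there is nothing to prove. Otherwise put $Z=Y/2=\sum_i c_iX_i$ with $c_i=b_i/2\in[0,1]$, and $\mu=m/2$, so that $\EE[Z]\leq\mu$. Let $p_i=\PP[X_i=1]$. The target event is $\{Z\geq(1+\delta)\mu\}$.

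\textbf{Moment-generating function.} For $\lambda>0$, $c\in[0,1]$ and $x\in\{0,1\}$, convexity of $y\mapsto e^{\lambda y}$ gives $e^{\lambda cx}\leq 1+cx(e^\lambda-1)$. Taking expectations and using independence,
$$
 \EE[e^{\lambda Z}]\leq\prod_i\bigl(1+c_ip_i(e^\lambda-1)\bigr)
 \leq\exp\bigl(\EE[Z](e^\lambda-1)\bigr)\leq\exp\bigl(\mu(e^\lambda-1)\bigr).
$$
The last step is where the hypothesis $\EE[Z]\leq\mu$ enters: the exponent is increasing in the mean, since $e^\lambda-1>0$. Markov's inequality then yields
$$
 \PP[Z\geq(1+\delta)\mu]\leq\exp\bigl(\mu(e^\lambda-1)-\lambda(1+\delta)\mu\bigr).
$$
Choosing $\lambda=\log(1+\delta)$ turns the right-hand side into $\bigl(e^\delta/(1+\delta)^{1+\delta}\bigr)^\mu$.

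\textbf{Elementary inequality (main step).} It remains to show $\delta-(1+\delta)\log(1+\delta)\leq-\delta^2/3$ for $0<\delta\leq1$. Let $g(\delta)$ denote the left-hand side plus $\delta^2/3$. Then $g(0)=0$, and $g'(\delta)=-\log(1+\delta)+2\delta/3$. This derivative vanishes at $0$, and $g''(\delta)=-1/(1+\delta)+2/3$ is negative on $[0,1/2)$ and positive afterwards. So $g'$ first decreases and then increases; since $g'(1)=2/3-\log 2<0$, we get $g'\leq0$ on $[0,1]$, and hence $g\leq0$ there.

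\textbf{Conclusion.} Combining the steps,
$$
 \PP[Y\geq(1+\delta)m]\leq\exp(-\delta^2\mu/3)=\exp(-\delta^2m/6)\leq\exp(-\delta^2m/8).
$$
The only place requiring care is the calculus inequality together with the monotonicity-in-mean step; the rest is routine. The weaker constant $8$ in the statement leaves ample slack, so any standard form of the Chernoff bound with constant at most $4$ in place of $3$ would also suffice.
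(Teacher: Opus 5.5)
Your argument is correct. Rescaling by $1/2$ puts the weights in $[0,1]$, the convexity bound together with $e^\lambda-1>0$ shows that an upper bound on the mean suffices, and your check of $\delta-(1+\delta)\log(1+\delta)\leq-\delta^2/3$ on $(0,1]$ is sound, giving even the stronger exponent $\delta^2m/6$. The paper gives no proof and simply cites the standard Chernoff--Hoeffding bound of Dubhashi and Panconesi, so your argument follows essentially the same route, written out in full.
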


\begin{proof}[Proof of Lemma~\ref{lem:fixed-tree-density-tail}]
The argument separates the deterministic contribution of the retained tree from the independent contribution of the remaining edges. We estimate the conditional mean of the latter and then apply Lemma~\ref{lem:bounded-weight-tail}.

Fix $U,W$ and $Q$ as in the statement, and write
$$
 Z \coloneqq U\cup W,\quad s\coloneqq |U|+|W|, \quad \text{as well as} \quad z\coloneqq|Z|.
$$
For each edge $e\in E(J)$, let $b_e\in\{0,1,2\}$ be its multiplicity in the ordered count $e_J(U,W)$. In particular, an edge with both endpoints in $U\cap W$ has multiplicity two. With $X_e$ denoting its retention indicator, we may write
$$
 e_R(U,W)=\sum_{e\in E(J)}b_eX_e.
$$

Condition on $E(Q)\subseteq E(R)$; this event has positive probability $\theta^{z-1}$. It depends only on the indicators indexed by $E(Q)$, so the indicators indexed by the other edges remain independent Bernoulli variables of parameter $\theta$. The contribution of these other edges is
$$
 Y=\sum_{e\in E(J)\setminus E(Q)}b_eX_e.
$$
We estimate its mean in this conditional probability space. The upper-density hypothesis on $J$, together with $|U||W|\leq s^2/4$ and $\sqrt{|U||W|}\leq s/2$, gives
\begin{align*}
 \EE[Y\mid E(Q)\subseteq E(R)]
 &=\theta\sum_{e\in E(J)\setminus E(Q)}b_e\\
 &\leq\theta e_J(U,W)\\
 &\leq K\left(\frac{s^2}{4n}+\frac{cs}{2}\right)\\
 &\leq\left(1+\frac\rho8\right)\frac{\eta Ks}{2}.
\end{align*}
For the last inequality, we used $s\leq2\eta n$ and $c\leq\rho\eta/8$. Put
$$
 B=\frac{\eta Ks}{2}\quad \text{as well as} \quad m_*=(1+\rho/8)B;
$$
so that $m_*$ is an upper bound on the conditional mean of $Y$.

The retained tree contributes 
$$
\sum_{e\in E(Q)}b_e\leq2(z-1)\leq2s.
$$
Since $K\geq32/(\rho\eta)$, this contribution is at most $\rho B/8$. Consequently, the density violation in Lemma~\ref{lem:fixed-tree-density-tail} entails
$$
 Y>(1+\rho)B-2s
 \geq(1+7\rho/8)B
 \geq(1+\rho/2)m_*.
$$
The final inequality follows from $0<\rho<1$ and
$$
 (1+\rho/2)(1+\rho/8)
 =1+5\rho/8+\rho^2/16
 \leq1+11\rho/16
 \leq1+7\rho/8.
$$
In the conditional probability space, Lemma~\ref{lem:bounded-weight-tail} applies with $m=m_*$ and $\delta=\rho/2$. It follows that
\begin{align*}
 &\PP\left[e_R(U,W)>(1+\rho)\eta Ks/2
       \,\middle|\,E(Q)\subseteq E(R)\right]\\
 &\qquad\leq\exp\left(-\frac{\rho^2m_*}{32}\right)
 \leq\exp\left(-\frac{\rho^2\eta Ks}{64}\right)
 \leq\exp(-aKz).
\end{align*}
Here, the second inequality uses $m_*\geq\eta Ks/2$, and the last uses $s\geq z$ and $a=\rho^2\eta/64$. This proves~\eqref{eq:connected-tree-tail}.
\end{proof}

\subsubsubsection{Enumerating tree witnesses}\label{sec:tree-pair-count-proof}
The aim of this section is to prove Lemma~\ref{lem:tree-pair-witness-count}. It is in this proof that the degree regularisation of the Dirac subgraph as to produce the essentially degree regular Dirac graph $J$ comes in handy; in that, the upper bound on $\Delta(J)$ limits the number of events through a spanning tree enumeration argument. 

\begin{proof}[Proof of Lemma~\ref{lem:tree-pair-witness-count}]
Fix $v\in V(J)$ and an integer $z\geq2$. If $J$ has no edges, there is no tree under consideration and the assertion holds; otherwise, fix an ordering of $V(J)$; root each tree on $z$ vertices containing $v$ at $v$, and order the children of every vertex according to this ordering. This associates to each tree a rooted shape with ordered children, together with a placement of its vertices in $J$.

There are at most $4^{z-1}$ such shapes. Indeed, traverse a rooted shape in depth-first order, visiting the children of each vertex in their prescribed order. Record a down step when moving from a parent to a child and an up step when returning to the parent. Every edge is traversed once in each direction, producing a word of length $2(z-1)$ with $z-1$ steps of each kind. The word determines the shape, because each down step creates the next child of the current vertex and each up step returns to its parent. Thus, the number of shapes is at most  $2^{2(z-1)}=4^{z-1}$ of all binary words of that length.

For each shape, place its root at $v$ and assign the other vertices in an order in which every parent precedes its children. The image of a child must be a neighbour in $J$ of the image of its parent. There are at most $\Delta(J)$ choices at each of the $z-1$ steps, and consequently at most $\Delta(J)^{z-1}$ placements. This upper bound allows repeated images; retaining these invalid placements can only increase the count. Every actual tree has a description of the prescribed kind, and a shape together with its placement determines its edges. Therefore, the number of tree subgraphs under consideration is at most
$$
 4^{z-1}\Delta(J)^{z-1}
 =\bigl(4\Delta(J)\bigr)^{z-1}.
$$

Fix one such tree $Q$. Its vertex set $Z=V(Q)$ has already been determined. A pair $(U,W)$ with $U\cup W=Z$ is specified by assigning each vertex of $Z$ to exactly one of the three classes $U\setminus W$, $W\setminus U$ and $U\cap W$. There are $3^z$ assignments. Some yield an empty set, an inadmissible pair, or a tree edge that does not contribute to the ordered edge count; discarding them only decreases the number of triples. Multiplying the two counts proves the claimed bound and its restricted version.
\end{proof}

\subsubsection{Conditioning without losing spread}\label{sec:conditional-tools}
In this section, we prove Lemma~\ref{lem:monotone-conditioning}. The lemma makes two assertions; the first is that $\PP[\cE] > 0$ and the second is~\eqref{eq:conditioning-containment} proclaiming the existence of a spread measure with a specific parameter. For the former assertion, the proof first establishes that, with positive probability, every
vertex of $R$ has degree at least $t$ and that none of the events $B_i$
occurs. This is made possible through two assumptions appearing in the premise of the lemma. 
Inequality~\eqref{eq:conditioning-margin} places the minimum degree $t$ that is required for the percolation of $J$ to be 
sufficiently below the expected percolated degree $\theta \deg_J(v)$ and thus ensuring 
that $\PP[\deg_R(v)<t]\leq1/8$ holds at every vertex $v$; this is a consequence of the following classical concentration result.  

\begin{lemma}[A Bernoulli lower-tail estimate]\label{lem:bernoulli-lower-tail} {\em~\cite[Theorem~1.1, equation~(1.7)]{DubhashiPanconesi2009}}
Let $X$ be a sum of finitely many independent Bernoulli random variables, with mean $m>0$. Then,
\begin{equation}\label{eq:carrier-chernoff}
 \PP[X\leq m-u]\leq\exp\left(-\frac{u^2}{2m}\right)
 \quad \text{whenever} \; 0\leq u\leq m.
\end{equation}
\end{lemma}

Continuing with the assertion that $\PP[\cE] >0$,  the assumption~\eqref{eq:conditioning-local-budget} is used to handle the the other unwanted
events in $\cE$ captured through the sequence $(B_i)_{i\in I}$. In that, for each fixed vertex $v$, the sum of 
probabilities $\PP[B_i]$, whose event $B_i$ is impacted by percolation experiments performed at $v$, is bounded through~\eqref{eq:conditioning-local-budget}. These probabilities are multiplied by certain weights that are chosen with hindsight as to prepare for an invocation of the lopsided local lemma, recorded in Lemma~\ref{lem:conditional-lll} below. 

Proceeding to~\eqref{eq:conditioning-containment} - the second assertion of Lemma~\ref{lem:monotone-conditioning} - having established that simultaneous avoidance has positive
probability, we condition on it in order to define the spread measure. For a prescribed edge set
$S\subseteq E(J)$, we then bound the probability that every edge
of $S$ is retained under this conditioning. The additional-event
form of the local lemma, recorded in Lemma~\ref{lem:extra-event}, is the main tool to control bias towards any subset $S$ incurred through conditioning. 

\medskip

The proof of Lemma~\ref{lem:monotone-conditioning} hinges on an application of the lopsided local lemma; we now gear up towards the presentation of the latter. For a finite family $(C_i)_{i\in I}$ of events, a graph $L$ on $I$ is a \emph{lopsided-dependency graph} provided
\begin{equation}\label{eq:lopsided-condition}
 \PP\left[C_i\,\middle|\,\bigcap_{j\in A}C_j^c\right]
 \leq\PP[C_i]
\end{equation}
holds, whenever $A\subseteq I\setminus(\{i\}\cup N_L(i))$ and the conditioning event has positive probability; here, $N_L(i)$ denotes the neighbourhood of $i$ in $L$. Thus, avoiding any collection of non-neighbours does not increase the probability of $C_i$.

The following lemma constructs lopsided dependency graphs that are tailored for monotone events. At this stage, let us be reminded of the terminology set just prior to the statement of Lemma~\ref{lem:monotone-conditioning} according to which an event is \emph{supported on} a set of edge coordinates (i.e. the Bernoulli indicators marking inclusion in the percolation of $J$) if those retention indicators determine whether it occurs. It is \emph{increasing} if changing a coordinate from zero to one cannot destroy it, and \emph{decreasing} if such a change cannot create it. A chosen support need not be minimal. In our eventual application of the lopsided local lemma, the lopsided dependency graph only joins low-degree events to increasing bad events, whilst events of the same type remain nonadjacent; this construction of the dependency graph is justified by the following result.

% ?All Bernoulli product spaces considered in this section have
% finitely many coordinates.?

\begin{lemma}\label{lem:monotone-lopsided}
Let $(C_i)_{i\in I}$ be a finite family of events in independent Bernoulli coordinates, where each event is either increasing or decreasing in these coordinates; for each event $C_i$, choose a support $T_i$ and one valid monotonicity type. The graph on $I$ in which vertices $i$ and $j$ are joined precisely when their chosen monotonicity types are opposite and $T_i\cap T_j\ne\varnothing$ is a lopsided-dependency graph.
\end{lemma}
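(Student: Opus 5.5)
The plan is to verify~\eqref{eq:lopsided-condition} directly via Harris' inequality (the FKG inequality for product measures). Fix $i\in I$ and a set $A\subseteq I\setminus(\{i\}\cup N_L(i))$ with $\PP[\bigcap_{j\in A}C_j^c]>0$. By construction of the graph, every $j\in A$ either has the same chosen monotonicity type as $i$, or has the opposite type but satisfies $T_j\cap T_i=\varnothing$. Accordingly, we split $A=A_1\cup A_2$, where $A_1$ consists of the indices of the same type as $i$ and $A_2$ of those of opposite type (hence disjoint support from $T_i$). We then reduce to the case in which $C_i$ is increasing; the decreasing case follows by the symmetric argument (or by flipping all coordinates $X\mapsto 1-X$, which swaps the two types and preserves supports and product structure).

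Assume $C_i$ is increasing. For $j\in A_1$, $C_j$ is increasing, so $C_j^c$ is decreasing. For $j\in A_2$, $C_j$ is decreasing, so $C_j^c$ is increasing but depends only on coordinates in $T_j$, which is disjoint from $T_i$. Put $\mathcal D=\bigcap_{j\in A_1}C_j^c$ (decreasing) and $\mathcal I=\bigcap_{j\in A_2}C_j^c$ (increasing, determined by the coordinates in $T_{A_2}\coloneqq\bigcup_{j\in A_2}T_j$, a set disjoint from $T_i$). The key step is to condition on the coordinates in $T_{A_2}$. Fix any assignment $\omega$ to the coordinates in $T_{A_2}$ that realises $\mathcal I$ and has positive probability. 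The remaining coordinates are still independent Bernoulli. Under this fixing, $C_i$ is unchanged as an increasing event of the free coordinates, since it depends only on $T_i$, which consists of free coordinates. Meanwhile, $\mathcal D$ restricted to the slice $\omega$ is a decreasing event of the free coordinates, as restriction of a monotone event to a coordinate slice preserves monotonicity. Harris' inequality on the free product space then gives
$$\PP[C_i\cap\mathcal D\mid\omega]\leq\PP[C_i\mid\omega]\,\PP[\mathcal D\mid\omega]=\PP[C_i]\,\PP[\mathcal D\mid\omega],$$
where the equality uses the independence of $C_i$ from the coordinates in $T_{A_2}$. Summing over all $\omega$ realising $\mathcal I$, weighted by their probabilities, yields $\PP[C_i\cap\mathcal D\cap\mathcal I]\leq\PP[C_i]\PP[\mathcal D\cap\mathcal I]$. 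Dividing by the positive quantity $\PP[\mathcal D\cap\mathcal I]$ gives~\eqref{eq:lopsided-condition}.

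The main obstacle is a technical one: Harris' inequality cannot be applied to $\mathcal D\cap\mathcal I$ directly, since this set is neither increasing nor decreasing. The slicing over the coordinates in $T_{A_2}$, which exploits the disjointness $T_i\cap T_{A_2}=\varnothing$, is precisely what resolves this. The remaining points are routine. Slices of zero probability are simply omitted from the sum. Empty $A_1$ or $A_2$ are handled trivially, with the convention that an empty intersection is the whole space. The fact that the chosen supports need not be minimal causes no difficulty, since the argument uses only that each event is determined by its chosen support.
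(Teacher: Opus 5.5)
Your proof is correct and follows essentially the paper's argument. You freeze coordinates so that the avoidance of the opposite-type non-neighbours becomes deterministic (using that their supports miss $T_i$), apply Harris' inequality to $C_i$ and the same-type avoidance event on the remaining product space, and average; the only differences are cosmetic, namely that you condition on $\bigcup_{j\in A_2}T_j$ whereas the paper conditions on all coordinates outside $T_i$, and that you get the decreasing case via the flip $X\mapsto 1-X$ while the paper treats both cases directly.
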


The following conditional form of the local lemma~\cite{ErdLov1975} is due to Erd\H{o}s and Spencer~\cite{ES1991}; a proof can also be seen in~\cite[Lemma~5.1.1, equation~(5.1)]{AS2016} and a more general version thereof in~\cite[Lemma~1.25]{TaoVu2006}. Lopsided dependency graphs produced by Lemma~\ref{lem:monotone-lopsided} are provided to this form of the local lemma.  

\begin{lemma}[Conditional local lemma]\label{lem:conditional-lll}
Suppose that $L$ is a lopsided-dependency graph for a finite family $(C_i)_{i\in I}$ and that numbers $x_i\in(0,1)$ satisfy
\begin{equation}\label{eq:lll-criterion}
 \PP[C_i]\leq x_i\prod_{j\in N_L(i)}(1-x_j)
 \quad \text{whenever}\; i\in I.
\end{equation}
For $A \subseteq I$, write $Q(A) \coloneqq \bigcap_{j\in A}C_j^c$, with $Q(\varnothing)$ denoting the whole probability space. Then, $\PP[Q(A)]>0$ for all $A\subseteq I$, and
\begin{equation}\label{eq:conditional-lll-bound}
 \PP[C_i\mid Q(A)]\leq x_i
 \quad \text{whenever} \; i\notin A.
\end{equation}
In particular, 
$$
\PP[Q(I)]\geq\prod_{i\in I}(1-x_i)>0.
$$
\end{lemma}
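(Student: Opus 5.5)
The plan is the standard induction from the Erdős–Spencer argument. I will prove, by induction on $|A|$, the joint assertion
\begin{equation*}
 \text{(i)}\ \PP[Q(A)]>0
 \qquad\text{and}\qquad
 \text{(ii)}\ \PP[C_i\mid Q(A)]\leq x_i\ \text{ for every } i\notin A .
\end{equation*}
Both parts are needed in the induction. The denominators only make sense once (i) is known for smaller sets, and (i) for $A$ follows from (ii) for proper subsets of $A$.

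\textbf{Base case.} For $A=\varnothing$, (i) is trivial. Part (ii) follows from~\eqref{eq:lll-criterion}, since the product there is at most one, so $\PP[C_i]\leq x_i$.

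\textbf{Proof of (i) for $A\neq\varnothing$.} Pick $a\in A$. By the induction hypothesis applied to $A\setminus\{a\}$,
\begin{equation*}
 \PP[Q(A)]
 =\PP[Q(A\setminus\{a\})]\bigl(1-\PP[C_a\mid Q(A\setminus\{a\})]\bigr)
 \geq\PP[Q(A\setminus\{a\})](1-x_a)>0 .
\end{equation*}

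\textbf{Proof of (ii) for $A\neq\varnothing$.} Fix $i\notin A$. Split $A$ into $A_1=A\cap N_L(i)$ and $A_2=A\setminus N_L(i)$. Note that $\PP[Q(A_2)]>0$ by (i), since either $A_2=A$ (just proved) or $A_2$ is a proper subset of $A$ (induction).

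If $A_1=\varnothing$, the lopsided condition~\eqref{eq:lopsided-condition} gives directly $\PP[C_i\mid Q(A)]\leq\PP[C_i]\leq x_i$.

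If $A_1\neq\varnothing$, then $A_2$ is a proper subset of $A$, and I write
\begin{equation*}
 \PP[C_i\mid Q(A)]
 =\frac{\PP[C_i\cap Q(A_1)\mid Q(A_2)]}{\PP[Q(A_1)\mid Q(A_2)]}
 \leq\frac{\PP[C_i\mid Q(A_2)]}{\PP[Q(A_1)\mid Q(A_2)]}.
\end{equation*}
The numerator is at most $\PP[C_i]$ by~\eqref{eq:lopsided-condition}, because $A_2$ avoids $\{i\}\cup N_L(i)$.

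For the denominator, enumerate $A_1=\{j_1,\dots,j_r\}$ and telescope:
\begin{equation*}
 \PP[Q(A_1)\mid Q(A_2)]
 =\prod_{k=1}^{r}\Bigl(1-\PP\bigl[C_{j_k}\,\big|\,Q(A_2\cup\{j_1,\dots,j_{k-1}\})\bigr]\Bigr).
\end{equation*}
Each conditioning set $A_2\cup\{j_1,\dots,j_{k-1}\}$ is a proper subset of $A$ not containing $j_k$. So the induction hypothesis, part (ii), bounds each conditional probability by $x_{j_k}$, and each conditioning event has positive probability by part (i). Hence the denominator is at least
\begin{equation*}
 \prod_{j\in A_1}(1-x_j)\geq\prod_{j\in N_L(i)}(1-x_j).
\end{equation*}
Combining this with~\eqref{eq:lll-criterion} gives
\begin{equation*}
 \PP[C_i\mid Q(A)]\leq\frac{\PP[C_i]}{\prod_{j\in N_L(i)}(1-x_j)}\leq x_i .
\end{equation*}

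\textbf{Final assertion.} Order $I=\{i_1,\dots,i_N\}$ and telescope as above:
\begin{equation*}
 \PP[Q(I)]=\prod_{k=1}^{N}\Bigl(1-\PP\bigl[C_{i_k}\,\big|\,Q(\{i_1,\dots,i_{k-1}\})\bigr]\Bigr)\geq\prod_{i\in I}(1-x_i)>0 .
\end{equation*}

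\textbf{Main obstacle.} The only delicate point is bookkeeping. Every conditioning set used must be a proper subset of $A$, so that the induction applies, and it must have positive probability before we divide by it. This is why (i) and (ii) are carried together in the induction, and why the case $A_1=\varnothing$ is treated separately rather than through the ratio.
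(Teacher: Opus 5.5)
Your proof is correct and is essentially the standard Erdős–Spencer induction (Alon–Spencer, Lemma~5.1.1) that the paper cites instead of proving the lemma. As there, you carry positivity and the conditional bound jointly, split $A$ according to $N_L(i)$, and telescope the denominator, with the lopsided inequality used in place of independence; the same telescoping device reappears in the paper's proof of Lemma~\ref{lem:extra-event}.
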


To understand the role of the next lemma, let $\cE$ denote the event that none of the bad events listed in {\bf Component~I} above occurs; assume that the local lemma asserts that $\PP[\cE] >0$ and write $A_S$ for the event that $S\subseteq E(R)$, where $S \subseteq E(J)$. The spread measure constructed in Lemma~\ref{lem:deterministic-carrier} comes about by conditioning on $\cE$; that is, eventually we seek to establish 
$$
\PP[A_S\mid \cE] \leq \left(\frac{\Theta(1)}{pn}\right)^{|S|}.
$$
Prior to conditioning, the bound 
$$
\PP[A_S] \leq \theta^{|S|}, \quad \theta = \frac{K}{pn}
$$
is known to hold and our task is therefore to show that conditioning on $\cE$
increases the probability of retaining a prescribed edge set
$S$ by at most $C^{|S|}$, where $C$ is a constant independent
of $n$, $p$, and $S$.

A direct route towards this goal is to start with 
$$
\PP[A_S\mid \cE] = \frac{\PP[A_S \cap \cE]}{\PP[\cE]} \leq \frac{\theta^{|S|}}{\PP[\cE]}.
$$
However, the local lemma bound $\PP[\cE]>0$ is too weak as $\PP[\cE] =o(1)$ is possible. 

Roughly put, the next lemma resolves this problem by allowing us to write 
$$
\PP[A_S \mid \cE]  \leq \PP[A_S] \cdot \prod_{v \in V(S)}(1-x(D_v))^{-1},
$$
where $V(S)$ is the set of vertices incident to the members of $S$, the $D_v$ are the degree events defined in {\bf Component~I}, and the $x(D_v)$ are the local lemma weights assigned to those events. 
In our application, the degree events receive weights
$x(D_v)=1/4$. Since each edge has two endpoints,
$|V(S)|\leq2|S|$, and hence
$$
 \PP[A_S\mid\cE]
 \leq
 \left(\frac{K}{pn}\right)^{|S|}
 \left(\frac43\right)^{|V(S)|}
 \leq
 \left(\frac{16K}{9pn}\right)^{|S|}.
$$
Thus, the increase in the probability of retaining $S$ is
bounded by $(16/9)^{|S|}$, independently of the probability
of $\cE$. This permits the construction of a spread measure
by conditioning even when $\cE$ is rare.

% Put more succinctly, the next lemma essentially asserts that the global rarity of $\cE$ need not produce a ``large'' bias towards any particular edge set and thus permits the construction of a spread measure by conditioning on a rare event. 

\begin{lemma}[An additional event after conditioning]
\label{lem:extra-event}
Let $(C_i)_{i\in I}$, $(x_i)_{i\in I}$, and $L$ satisfy
the hypotheses of Lemma~\ref{lem:conditional-lll}.
For $A'\subseteq I$, write
$$
 Q(A')=\bigcap_{i\in A'}C_i^c,
$$
with $Q(\varnothing)$ denoting the whole probability space.
Let $A$ be an event in the same probability space.
Then, the following assertions hold.
\begin{enumerate}
\item
\emph{\textbf{Conditional probability bound.}}
If $T\subseteq I$ satisfies
$$
 \PP[A\mid Q(A')]\leq\PP[A]
 \qquad\text{for every }A'\subseteq I\setminus T,
$$
then
\begin{equation}\label{eq:extra-event}
 \PP[A\mid Q(I)]
 \leq\PP[A]\prod_{i\in T}(1-x_i)^{-1}.
\end{equation}

\item
\emph{\textbf{An explicit choice of $T$.}}
Suppose that $A$ and all the events $C_i$ are determined
by a finite family of independent Bernoulli random variables
$(X_s)_{s\in\Lambda}$.
A support of an event is a set of coordinate indices whose
corresponding random variables determine whether the event occurs.

Suppose that $A$ is increasing with support $S\subseteq\Lambda$.
For each $i\in I$, suppose that $C_i$ is increasing or decreasing,
choose one of its valid monotonicity types, and choose a support
$T_i\subseteq\Lambda$.
Then, the set
$$
 T_0=\{i\in I\colon
       C_i\text{ has chosen decreasing type and }
       T_i\cap S\ne\varnothing\}
$$
satisfies
$$
 \PP[A\mid Q(A')]\leq\PP[A]
 \qquad\text{for every }A'\subseteq I\setminus T_0.
$$
Consequently, the bound in the first assertion holds with
$T=T_0$.
\end{enumerate}
\end{lemma}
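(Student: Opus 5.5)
We prove the first assertion by the standard Erd\H{o}s--Spencer conditioning argument, and the second by Harris' inequality. For the first assertion, split $I=T\cup(I\setminus T)$ and write $Q(I)=Q(T)\cap Q(I\setminus T)$. Positivity of every $\PP[Q(A')]$ is supplied by Lemma~\ref{lem:conditional-lll}. Then
$$
 \PP[A\mid Q(I)]
 \leq\frac{\PP[A\cap Q(I\setminus T)]}{\PP[Q(I)]}
 =\frac{\PP[A\mid Q(I\setminus T)]}{\PP[Q(T)\mid Q(I\setminus T)]}
 \leq\frac{\PP[A]}{\PP[Q(T)\mid Q(I\setminus T)]},
$$
where the last step uses the hypothesis with $A'=I\setminus T$.

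It remains to show that the denominator is at least $\prod_{i\in T}(1-x_i)$. Enumerate $T=\{i_1,\ldots,i_k\}$ and telescope:
$$
 \PP[Q(T)\mid Q(I\setminus T)]=\prod_{j=1}^k\Bigl(1-\PP\bigl[C_{i_j}\,\big|\,Q\bigl((I\setminus T)\cup\{i_1,\ldots,i_{j-1}\}\bigr)\bigr]\Bigr).
$$
Each conditional probability is at most $x_{i_j}$ by~\eqref{eq:conditional-lll-bound}, since $i_j$ does not belong to the conditioning index set. This gives~\eqref{eq:extra-event}. The same telescoping also shows that the conditioning events have positive probability, so every quotient above is well defined.

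For the second assertion, fix $A'\subseteq I\setminus T_0$ with $\PP[Q(A')]>0$, which holds by Lemma~\ref{lem:conditional-lll}. Split $A'=A'_1\cup A'_2$ as follows. $A'_1$ consists of the indices whose chosen type is increasing; for these, $C_i^c$ is decreasing. $A'_2$ consists of the decreasing-type indices; since $A'\cap T_0=\varnothing$, their supports are disjoint from $S$, and $C_i^c$ is increasing.

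Condition on the coordinates outside $S$. Given those coordinates, $A$ is an increasing event in the $S$-coordinates, while $Q(A'_2)$ is already determined because it does not involve the $S$-coordinates. Moreover, $Q(A'_1)$ becomes a decreasing event in the $S$-coordinates, and the $S$-coordinates remain independent of the outside ones. Harris' inequality in the $S$-coordinates therefore gives, pointwise in the outside coordinates $\omega$,
$$
 \PP[A\cap Q(A')\mid\omega]\leq\PP[A]\,\PP[Q(A')\mid\omega].
$$
Here $\PP[A\mid\omega]=\PP[A]$ because $A$ depends only on the $S$-coordinates. Averaging over $\omega$ yields $\PP[A\cap Q(A')]\leq\PP[A]\PP[Q(A')]$, which is the required inequality.

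The main obstacle is this Harris step, which requires care on two points. First, coordinates of $Q(A'_1)$ outside $S$ must be frozen correctly: after fixing them, $Q(A'_1)$ stays decreasing in the $S$-coordinates, because restricting a monotone event preserves monotonicity. Second, each event must be treated only through its chosen monotonicity type and its chosen support, which need not be minimal; the argument uses nothing else. Since every event $C_i$ depends only on its support $T_i$, the pointwise factorisation is legitimate, and the first assertion then applies with $T=T_0$.
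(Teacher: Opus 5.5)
Your proposal is correct and follows essentially the same route as the paper. For the first assertion, you telescope $\PP[Q(T)\mid Q(I\setminus T)]\geq\prod_{i\in T}(1-x_i)$ via the conditional local lemma and then apply the hypothesis with $A'=I\setminus T$. For the second, you freeze the coordinates outside $S$, so that the decreasing-type events in $A'$ are determined and avoiding the increasing-type ones becomes a decreasing event in the $S$-coordinates, and then apply Harris' inequality and average.
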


\medskip

Lemmata~\ref{lem:monotone-lopsided} and~\ref{lem:extra-event} are proved in Sections~\ref{sec:lem:monotone-lopsided} and~\ref{sec:lem:extra-event}, respectively; here, we proceed with deducing 
Lemma~\ref{lem:monotone-conditioning} form these two.  

\begin{proof}[Proof of Lemma~\ref{lem:monotone-conditioning}]
First, we apply the local lemma in order to establish that $\PP[\cE] >0$; second, we treat the spread measure assertion. 

\medskip
\noindent
{\sl\lsstyle I. Establishing that $\PP[\cE] >0$.} We prepare for an application of the local lemma. 
For each vertex $v\in V(J)$, define the event
$$
 D_v=\{\deg_R(v)<t\}.
$$
This event is decreasing; choose its support to be the set
of all edges of $J$ incident to $v$, since their retention
indicators determine $\deg_R(v)$.
By assumption, all other bad events are the increasing events $B_i$, each with its chosen support $T_i$.

Discard any event $B_i$ of probability zero.
Every assignment of the edge-retention indicators has positive
probability because $0<\theta<1$, so each discarded event is empty.
This does not change $\mathcal E$.
For the remaining events, assign (local lemma) weights
\begin{equation}\label{eq:carrier-weights}
 x(D_v)=\frac14
 \quad \text{and} \quad
 x(B_i)=\left(\frac43\right)^{|Z_i|}\PP[B_i].
\end{equation}
With zero probability events discarded, the weight of every increasing-event is now positive.
As $Z_i$ is nonempty, choose $v\in Z_i$; the weight $x(B_i)$ is then one of the nonnegative summands appearing
in~\eqref{eq:conditioning-local-budget} for that vertex, left hand side of which is bounded by $1/4$. It follows that $x(B_i)\leq1/4$ holds for every $i \in I$ and thus all assigned weights belong to $(0,1)$ as required by the local lemma.

A lopsided dependency graph is attained through an application of Lemma~\ref{lem:monotone-lopsided} to the combined family
of events $D_v$ and $B_i$, using the chosen supports and
monotonicity types.
The resulting lopsided-dependency graph $L$ joins $D_v$ to
$B_i$ precisely when $T_i$ contains an edge incident to $v$.
In particular, such a neighbour satisfies $v\in Z_i$, since
$Z_i$ contains both endpoints of every edge in $T_i$.
There are no edges between two degree events or between two
increasing events.

Next, we verify~\eqref{eq:lll-criterion} for each event. Starting with the increasing
event $B_i$,  all neighbours of said event are degree events associated with vertices
in $Z_i$.
There are therefore at most $|Z_i|$ such neighbours, each
of weight $1/4$.
Consequently,
$$
 x(B_i)\prod_{D_v\in N_L(B_i)}(1-x(D_v))
 \geq x(B_i)\left(\frac34\right)^{|Z_i|}
 =\PP[B_i]
$$
holds, where the equality follows from~\eqref{eq:carrier-weights}.
This proves the required inequality for every $B_i$.

Next, we verify~\eqref{eq:lll-criterion} for degree events $D_v$. The random variable $\deg_R(v)$ is a sum of independent Bernoulli
variables with mean $m_v=\theta \deg_J(v)$.
The assumption~\eqref{eq:conditioning-margin}, together
with $t\geq0$, implies
$$
 0<m_v-t\leq m_v.
$$
Applying Lemma~\ref{lem:bernoulli-lower-tail} with
$u=m_v-t$, we obtain
\begin{equation}\label{eq:carrier-degree-tail}
 \PP[D_v]
 \leq\PP[\deg_R(v)\leq t]
 \leq\exp\left(-\frac{(m_v-t)^2}{2m_v}\right)
 \leq\frac18.
\end{equation}
The last inequality uses~\eqref{eq:conditioning-margin}.

Every neighbour $B_i$ of $D_v$ satisfies $v\in Z_i$.
Hence,~\eqref{eq:conditioning-local-budget} bounds the total
weight of these neighbours by
$$
 \sum_{B_i\in N_L(D_v)}x(B_i)
 \leq
 \sum_{\substack{i\in I:\\v\in Z_i}}
 \left(\frac43\right)^{|Z_i|}\PP[B_i]
 \leq\frac14.
$$
At this stage we are reminded\footnote{This inequality follows by induction.
Indeed, multiplying the inequality for the first $k-1$
factors by $1-a_k$ yields the desired right-hand side
together with the nonnegative term
$a_k\sum_{\ell<k}a_\ell$.
The empty-product case is equality.
} that for numbers $a_1,\ldots,a_k\in[0,1]$, the inequality 
$$
 \prod_{\ell=1}^k(1-a_\ell)
 \geq1-\sum_{\ell=1}^k a_\ell
$$
holds. Applying this inequality to the weights of the neighbours
of $D_v$, we arrive at
$$
 \begin{aligned}
 x(D_v)\prod_{B_i\in N_L(D_v)}(1-x(B_i))
 &\geq
 \frac14\left(1-\sum_{B_i\in N_L(D_v)}x(B_i)\right)\\
 &\geq\frac{3}{16}
 >\frac18
 \geq\PP[D_v].
 \end{aligned}
$$
This establishes~\eqref{eq:lll-criterion} for every degree event $D_v$ as well. 

\medskip
Lemma~\ref{lem:conditional-lll} then asserts that 
$\PP[\mathcal E]>0$ holds.

\medskip
\noindent
{\sl\lsstyle II. Maintaining spread following a conditioning on $\cE$.}
Fix $S\subseteq E(J)$ and define
$$
 A_S=\{S\subseteq E(R)\}.
$$
This event is increasing and is supported on the coordinates
indexed by $S$.
Independence of the retention indicators implies
$$
 \PP[A_S]=\theta^{|S|}.
$$

We apply the second assertion of Lemma~\ref{lem:extra-event}
with $A=A_S$, coordinate set $\Lambda=E(J)$, and the combined
bad-event family consisting of the $D_v$ and the $B_i$.
The events with chosen decreasing type are precisely the $D_v$-events.
The chosen support of $D_v$ meets $S$ precisely when $v$
is incident to an edge of $S$.
Recalling that $V(S)$ denotes the set of all such endpoints, the set
$T_0$ in that assertion therefore consists of the indices
of the events $D_v$ with $v\in V(S)$.

The second assertion verifies the conditional-probability
hypothesis in the first assertion for this choice of $T_0$.
Since avoidance of the entire combined family is $\mathcal E$,
the resulting bound is
$$
 \begin{aligned}
 \PP[A_S\mid\mathcal E]
 \leq
 \PP[A_S]\prod_{v\in V(S)}(1-x(D_v))^{-1}
 =\theta^{|S|}
   \left(\frac43\right)^{|V(S)|}.
 \end{aligned}
$$
This proves~\eqref{eq:conditioning-containment}.
If $S=\varnothing$, then $A_S$ is the whole probability space
and $V(S)=\varnothing$; both sides of the asserted bound
equal one.
\end{proof}
% \subsubsubsection{The lower-tail estimate}
% In this section, we prove Lemma~\ref{lem:bernoulli-lower-tail}. The exponential moment of a Bernoulli sum reduces the desired probability bound to a one-variable inequality. This estimate supplies the degree-event bound used in Lemma~\ref{lem:monotone-conditioning}.

% \begin{proof}[Proof of Lemma~\ref{lem:bernoulli-lower-tail}]
% Write $X=\sum_{\ell=1}^N X_\ell$, where the variables are independent and $\PP[X_\ell=1]=p_\ell$. For $s\geq0$, independence and $1+y\leq\exp(y)$ imply
% $$
%  \EE[\exp(-sX)]
%  =\prod_{\ell=1}^N\bigl(1+p_\ell(\exp(-s)-1)\bigr)
%  \leq\exp\bigl(m(\exp(-s)-1)\bigr).
% $$
% For $0<u<m$, apply Markov's inequality to $\exp(-sX)$ with
% $s=-\log(1-u/m)>0$. Since $X\leq m-u$ implies
% $\exp(-sX)\geq\exp(-s(m-u))$, the resulting estimate is
% $$
%  \PP[X\leq m-u]
%  \leq\exp\bigl(-m((1-u/m)\log(1-u/m)+u/m)\bigr).
% $$
% Set $h(x)=(1-x)\log(1-x)+x$. On $[0,1)$, this function satisfies $h(0)=h'(0)=0$ and $h''(x)=1/(1-x)\geq1$. Integrating twice shows that $h(x)\geq x^2/2$. Substitution with $x=u/m$ proves~\eqref{eq:carrier-chernoff} for $0<u<m$. At $u=0$, the asserted bound is one. At $u=m$, the event is $\{X=0\}$ and independence yields
% $$
%  \PP[X=0]=\prod_{\ell=1}^N(1-p_\ell)
%  \leq\exp(-m)\leq\exp(-m/2),
% $$
% which completes the proof.
% \end{proof}

\subsubsubsection{Lopsided dependency graphs through opposing monotonicities} \label{sec:lem:monotone-lopsided}
The aim of this section is to prove Lemma~\ref{lem:monotone-lopsided}. Our main tool in this venue is the following form of Harris' inequality~\cite{Harris1960}. The functional form stated here is the Bernoulli-product specialisation of the FKG inequality and its opposite-monotonicity extension; see~\cite[Theorem~6.2.1 and the subsequent discussion, and Section~6.3]{AS2016}.

\begin{lemma}[Harris' inequality]\label{lem:harris}
Let $f$ and $g$ be increasing real-valued functions of finitely many independent Bernoulli variables. Then, $\EE[fg]\geq\EE[f]\EE[g]$. If one function is increasing and the other decreasing, the reverse inequality holds.
\end{lemma}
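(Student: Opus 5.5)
We plan to prove Lemma~\ref{lem:harris} by induction on the number $N$ of Bernoulli coordinates. We reduce the mixed-monotonicity case to the increasing case at the outset: if $f$ is increasing and $g$ is decreasing, then $-g$ is increasing. The increasing case applied to $f$ and $-g$ gives $\EE[-fg]\geq\EE[f]\EE[-g]$, which is the reverse inequality. So it suffices to treat two increasing functions $f,g$ of independent Bernoulli variables $X_1,\ldots,X_N$, with $\PP[X_\ell=1]=q_\ell$.

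\textbf{Base cases.} For $N=0$ both functions are constant and equality holds. For $N=1$, write $q=q_1$. A direct computation gives
$$
 \EE[fg]-\EE[f]\EE[g]=q(1-q)\bigl(f(1)-f(0)\bigr)\bigl(g(1)-g(0)\bigr).
$$
Both differences are nonnegative by monotonicity, so the right-hand side is nonnegative. Equivalently, one may use the symmetrisation identity $2(\EE[fg]-\EE[f]\EE[g])=\EE[(f(X)-f(X'))(g(X)-g(X'))]$ for an independent copy $X'$. Here the integrand is pointwise nonnegative, because on a totally ordered set $f$ and $g$ move in the same direction.

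\textbf{Inductive step.} Write $x'=(x_1,\ldots,x_{N-1})$ and define
$$
 F(x')=\EE[f(x',X_N)], \qquad G(x')=\EE[g(x',X_N)], \qquad H(x')=\EE[(fg)(x',X_N)].
$$
By independence of $X_N$ from the other coordinates, these are the conditional expectations given $X_1,\ldots,X_{N-1}$. We argue in three steps.
\begin{enumerate}
\item For each fixed $x'$, the maps $x_N\mapsto f(x',x_N)$ and $x_N\mapsto g(x',x_N)$ are increasing. The base case then gives $H(x')\geq F(x')G(x')$ pointwise.
\item The functions $F$ and $G$ are increasing in $x'$. Indeed, if $x'\leq y'$ coordinatewise, then $f(x',b)\leq f(y',b)$ for each $b\in\{0,1\}$. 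Averaging against the fixed law of $X_N$, which does not depend on $x'$ by independence, preserves this inequality.
\item The induction hypothesis for $N-1$ coordinates applied to $F,G$ gives $\EE[FG]\geq\EE[F]\EE[G]$.
\end{enumerate}
Chaining these yields
$$
 \EE[fg]=\EE[H]\geq\EE[FG]\geq\EE[F]\EE[G]=\EE[f]\EE[g].
$$

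\textbf{Main point of care.} The only step needing attention is the second one: monotonicity of the averaged functions. It is exactly where independence of the coordinates is used, since the law of $X_N$ must not depend on $x'$. The rest is routine. Degenerate parameters $q_\ell\in\{0,1\}$ cause no issue, because the identities above hold verbatim. In the later application to events, one takes $f,g$ to be indicator functions.
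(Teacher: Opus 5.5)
Your proof is correct, but it takes a different route from the paper, which gives no proof of this lemma. The paper cites Harris and the FKG inequality in Alon--Spencer (Theorem~6.2.1, with the monotone-property corollaries of Section~6.3). There, FKG is derived from the Ahlswede--Daykin four functions theorem for log-supermodular measures on finite distributive lattices and then specialised to the product Bernoulli measure.

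You instead give the classical direct induction on the number of coordinates. Your base case is the exact identity $q(1-q)\bigl(f(1)-f(0)\bigr)\bigl(g(1)-g(0)\bigr)$, and independence is used only to make the averaged functions $F,G$ monotone and to write $\EE[fg]=\EE[H]$. Your reduction of the mixed case to the increasing case via $-g$ is exactly the ``opposite-monotonicity extension'' the paper invokes.

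Your route is self-contained and short. The cited route is heavier, since the four functions theorem needs a more delicate one-coordinate inequality, but it is more general: it covers arbitrary log-supermodular measures, not only product measures. The paper never needs that generality. Every application of the lemma, in the proofs of Lemmas~\ref{lem:monotone-lopsided} and~\ref{lem:extra-event}, is to the law of the coordinates left after fixing those outside $T$ or $S$. By independence, that law is again a product of Bernoulli variables, so your argument covers all the uses in the paper.
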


\begin{proof}[Proof of Lemma~\ref{lem:monotone-lopsided}]
Let $(X_s)_{s\in\Lambda}$ be the independent Bernoulli random
variables underlying the events. Let $L$ be the graph appearing in the premise of the lemma. Fix $i\in I$ and let $A'\subseteq I\setminus\bigl(\{i\}\cup N_L(i)\bigr)$ be a subset of its non-neighbours 
for which $\PP[Q(A')] >0$, where 
$$
 Q(A'):=\bigcap_{j\in A'}C_j^c,
$$
is the {\em avoidance event} for the events indexed by $A'$; we focus on sets $A'$ for which $\PP[Q(A')]>0$ holds, since the lopsided property imposes no condition otherwise. 

The rest of the proof is dedicated to establishing that 
$$
\PP[C_i \mid Q(A')] \leq \PP[C_i]
$$ 
holds; this is the lopsided property asserted by the lemma for the graph constructed. Owing to the identity 
$$
\PP[C_i \mid Q(A')] = \frac{\PP[C_i \cap Q(A')]}{\PP[Q(A')]},
$$
it suffices to prove that 
\begin{equation}\label{eq:cond-effect}
\PP[C_i \cap Q(A')] \leq \PP[C_i] \cdot \PP[Q(A')]   
\end{equation}
holds. 

Harris' inequality would establish this bound directly
if $C_i$ and $Q(A')$ had opposite monotonicities.
However, $A'$ may index both increasing and decreasing
events. Avoidance of an increasing event is a decreasing
requirement, whereas avoidance of a decreasing event is
an increasing requirement. Their intersection $Q(A')$
need not have either monotonicity in the full collection
of coordinates. We therefore first fix certain coordinates
so that the remaining avoidance requirements have the
monotonicity needed for Harris' inequality.

Put $T:=T_i$ and let $\mathcal A^{\rm op}_i\subseteq A'$ denote the indices of events whose monotonicity type is opposite to that of $C_i$. Every event indexed by $\mathcal A^{\rm op}_i$ has support disjoint
from $T$, since $A'$ is a set of non-neighbours of $i$. Thus, the occurrence or non-occurrence of
each event indexed by $\mathcal A^{\rm op}_i$ is unaffected by any assignment to the coordinates in $T$.

Fix the coordinates outside $T$ and condition 
on their values. For an assignment
$$
 \boldsymbol{\xi}
 =(\xi_s)_{s\in\Lambda\setminus T}
 \in\{0,1\}^{\Lambda\setminus T},
$$
write
$$
 \Omega_{\boldsymbol{\xi}}
 :=\{X_s=\xi_s\text{ for every }s\in\Lambda\setminus T\}.
$$
Consider any such assignment with
$\PP[\Omega_{\boldsymbol{\xi}}]>0$, and condition on
$\Omega_{\boldsymbol{\xi}}$. Independence of the
coordinates ensures that the variables $(X_s)_{s\in T}$
remain independent and retain their original Bernoulli
distributions. Since these variables determine $C_i$,
$$
 \PP[C_i\mid\Omega_{\boldsymbol{\xi}}]=\PP[C_i].
$$

Under this conditioning, every event indexed by $\mathcal A^{\rm op}_i$ is completely
determined, since all coordinates in its support have been
fixed. If at least one of these events occurs,
then $Q(A')$ is impossible, regardless of the values
of the coordinates in $T$. In this case,
$$
 \PP[C_i\cap Q(A')\mid\Omega_{\boldsymbol{\xi}}]
 =\PP[Q(A')\mid\Omega_{\boldsymbol{\xi}}]=0.
$$
Otherwise, none of these events occurs, so every event indexed by $\mathcal A^{\rm op}_i$ 
is already avoided. 

The remaining requirement for
$Q(A')$ is to avoid all events indexed by $A'$ whose monotonicity 
type agrees with that of $C_i$, namely those indexed by $A' \setminus \mathcal A^{\rm op}_i$; in this setting, Harris' inequality becomes relevant as follows. Fixing the coordinates
outside $T$ preserves their monotonicity in the
coordinates in $T$. If $C_i$ is increasing, then the events indexed by $A' \setminus \mathcal A^{\rm op}_i$
are increasing in the coordinates in $T$ as well; their
complements are then decreasing, and so is their intersection. Thus, in this case, $Q(A')$ is decreasing in the remaining coordinates. 
On the other hand, if $C_i$ is decreasing, then the events indexed by $A' \setminus \mathcal A^{\rm op}_i$ are
decreasing; their complements and their intersection
are increasing so that $Q(A')$ is increasing in the
remaining coordinates.
It follows that $C_i$ and $Q(A')$
have opposite monotonicities under the conditional
product distribution on $(X_s)_{s\in T}$.
Harris' inequality, Lemma~\ref{lem:harris}, delivers 
$$
 \begin{aligned}
 \PP[C_i\cap Q(A')\mid\Omega_{\boldsymbol{\xi}}]
 &\leq
 \PP[C_i\mid\Omega_{\boldsymbol{\xi}}]\,
 \PP[Q(A')\mid\Omega_{\boldsymbol{\xi}}]\\
 &=\PP[C_i]\,
 \PP[Q(A')\mid\Omega_{\boldsymbol{\xi}}].
 \end{aligned}
$$

Averaging over the assignments $\boldsymbol{\xi}$ 
for which $\PP[\Omega_{\boldsymbol{\xi}}]>0$, 
the law of total probability allows us to write
$$
 \begin{aligned}
 \PP[C_i\cap Q(A')]
 &=
 \sum_{\boldsymbol{\xi}}
 \PP[\Omega_{\boldsymbol{\xi}}]\,
 \PP[C_i\cap Q(A')\mid\Omega_{\boldsymbol{\xi}}]\\
 &\leq
 \PP[C_i]\sum_{\boldsymbol{\xi}}
 \PP[\Omega_{\boldsymbol{\xi}}]\,
 \PP[Q(A')\mid\Omega_{\boldsymbol{\xi}}]\\
 &=\PP[C_i]\PP[Q(A')],
 \end{aligned}
$$
establishing~\eqref{eq:cond-effect}. 
\end{proof}

\subsubsubsection{The cost of conditioning for an additional event}\label{sec:lem:extra-event}
In this section, we prove Lemma~\ref{lem:extra-event}; it makes two assertions. For the first assertion, the key estimate is a lower bound
on the probability of avoiding the bad events indexed by $T$,
conditional on having avoided all the other bad events.
We first use this estimate to derive the asserted bound on
$\PP[A\mid Q(I)]$. We then prove the estimate by expressing
the avoidance probability as a product of successive conditional
probabilities, each controlled by the conditional local lemma.

For the second assertion, we identify which bad events must
be included in $T$ when $A$ is increasing.
We show that it suffices to include the decreasing bad events
whose supports meet the support of $A$.
To justify this choice, we fix the coordinates outside the
support of $A$ and apply Harris' inequality to the remaining
coordinates. Averaging over the fixed assignments then shows
that avoiding any collection of bad events outside this
chosen set does not increase the probability of $A$.

\begin{proof}[Proof of Lemma~\ref{lem:extra-event}]
By its hypotheses and the local lemma, the following inequalities
$$
\PP[Q(I')] > 0 \quad \text{as well as} \quad   \PP[C_i \mid Q(I')] \leq x_i, \; \text{whenever $i\notin I'$}
$$
hold for any $I' \subseteq I$; these two assertions drive the proof. 

\medskip

Starting with the first assertion of the lemma, set $A'=I\setminus T$ and note that successive applications of~\eqref{eq:conditional-lll-bound} establish
\begin{equation}\label{eq:extra-event-avoidance} 
\PP[Q(T)\mid Q(A')]\geq\prod_{i\in T}(1-x_i).
\end{equation}
Prior to proving~\eqref{eq:extra-event-avoidance}, let us use it to derive the first assertion of the lemma. Since $I = A' \cup T$, we have $Q(I) = Q(A') \cap Q(T)$. Write 
$$
 \PP[A\mid Q(I)] = \frac{\PP[A \cap Q(T)\mid Q(A')]}{\PP[Q(T) \mid Q(A')]}
 \leq\frac{\PP[A\mid Q(A')]}{\PP[Q(T)\mid Q(A')]}
 \leq\PP[A]\prod_{i\in T}(1-x_i)^{-1},
$$
where the first inequality uses $A\cap Q(T)\subseteq A$,
and the second uses the hypothesis
$\PP[A\mid Q(A')]\leq\PP[A]$ together
with~\eqref{eq:extra-event-avoidance}.

To prove~\eqref{eq:extra-event-avoidance}, enumerate $T = \{i_1,\ldots, i_m\}$; the local lemma assures us that 
$$
\PP[C_{i_1} \mid Q(A')] \leq x_{i_1} 
$$
so that 
$$
\PP[C_{i_1}^c\mid Q(A')] = 1 - \PP[C_{i_1} \mid Q(A')] \geq 1 - x_{i_1}. 
$$
For the second event in the enumeration, condition on the avoidance of $C_{i_1}$ as well; as $Q(A') \cap C_{i_1}^c = Q(A' \cup \{i_1\})$, the same local lemma bound asserts that 
$$
\PP[C_{i_2}^c \mid Q(A' \cup \{i_1\})] \geq 1 -x_{i_2}. 
$$
More generally, at step $k$, we may write 
\begin{equation}\label{eq:extra-event-step}
\PP[C_{i_k}^c \mid Q(A' \cup \{i_1,\ldots,i_{k-1}\})] \geq 1-x_{i_k}. 
\end{equation}

To relate the probabilities in~\eqref{eq:extra-event-step} to $\PP[Q(T) \mid Q(A')]$, which we seek to bound from below, start by writing
$$
\PP[C_{i_k}^c \mid Q(A' \cup \{i_1,\ldots,i_{k-1}\})] = \frac{\PP[C_{i_k}^c \cap Q(A' \cup \{i_1,\ldots,i_{k-1}\})]}{\PP[Q(A' \cup \{i_1,\ldots,i_{k-1}\})]} = \frac{\PP[Q(A' \cup \{i_1,\ldots,i_{k}\})]}{\PP[Q(A' \cup \{i_1,\ldots,i_{k-1}\})]}.
$$
We may then recover $\PP[Q(T) \mid Q(A')]$ by multiplying the probabilities in~\eqref{eq:extra-event-step} and using the following cancellations 
\begin{align}
\prod_{k=1}^m  \PP[C_{i_k}^c & \mid Q(A' \cup \{i_1,\ldots,i_{k-1}\})] \nonumber\\ 
& = \frac{\PP[Q(A'\cup \{i_1\})]}{\PP[Q(A')]} \cdot \frac{\PP[Q(A'\cup \{i_1,i_2\})]}{\PP[Q(A' \cup \{i_1\})]} \cdots \frac{\PP[Q(A'\cup T)]}{\PP[Q(A' \cup (T \setminus \{i_m\}))]} \nonumber\\
& = \frac{\PP[Q(A'\cup T)]}{\PP[Q(A')]} \nonumber\\
& = \frac{\PP[Q(A')\cap Q(T)]}{\PP[Q(A')]}\nonumber\\
& = \PP[Q(T) \mid Q(A')], \label{eq:extra-event-chain}
\end{align}
where the penultimate equality relies on $Q(A' \cup T) = Q(A') \cap Q(T)$.

Together with the identity~\eqref{eq:extra-event-chain}, the inequality~\eqref{eq:extra-event-step} delivers  
\begin{align*}
    \PP[Q(T) \mid Q(A')] = \prod_{k=1}^m  \PP[C_{i_k}^c \mid Q(A' \cup \{i_1,\ldots,i_{k-1}\})]  \geq \prod_{i \in T} (1-x_i),
\end{align*}
establishing~\eqref{eq:extra-event-avoidance}.

\medskip

We now prove the second assertion of the lemma; here, $S\subseteq\Lambda$
denotes the coordinate support of $A$, as in the statement.
Recalling the definition 
$$
 T_0=\{i\in I:C_i\text{ is decreasing and }
                    T_i\cap S\ne\varnothing\},
$$
our aim is to prove that 
$$
 \PP[A\mid Q(A')]\leq\PP[A]
$$
holds whenever $A'\subseteq I\setminus T_0$. Fix such a set $A'$; the positivity of $\PP[Q(A')]$ follows from the local lemma,
as recalled at the beginning of the proof. It therefore
suffices to prove that
$$
 \PP[A\cap Q(A')]\leq\PP[A]\PP[Q(A')]
$$
holds (and then divide by $\PP[Q(A')]$).

Although $A$ is increasing, $Q(A')$ need not be decreasing
in the full collection of coordinates.
Hence, a direct application
of Harris' inequality to $A$ and $Q(A')$ is not available. Following our approach in the proof of Lemma~\ref{lem:monotone-lopsided}, we resolve this
difficulty by fixing the coordinates outside $S$. As the notation here differs somewhat from that seen in Lemma~\ref{lem:monotone-lopsided}, we provide a full account of the argument here but in a more compact fashion. 

Every decreasing event $C_j$ indexed
by $A'$ has support disjoint from $S$. Consequently,
the occurrence or non-occurrence of each such $C_j$
is unaffected by any assignment to the coordinates in $S$. For an assignment
$$
 \boldsymbol{\xi}
 =(\xi_s)_{s\in\Lambda\setminus S}
 \in\{0,1\}^{\Lambda\setminus S},
$$
define
$$
 \Omega_{\boldsymbol{\xi}}
 :=\{X_s=\xi_s\text{ for every }s\in\Lambda\setminus S\}.
$$
Fix an assignment with
$\PP[\Omega_{\boldsymbol{\xi}}]>0$, and condition on
$\Omega_{\boldsymbol{\xi}}$. Independence of the
coordinates ensures that the variables $(X_s)_{s\in S}$
remain independent and retain their original Bernoulli
distributions. Since $A$ is determined by these variables,
$$
 \PP[A\mid\Omega_{\boldsymbol{\xi}}]=\PP[A].
$$

All coordinates determining any decreasing event indexed
by $A'$ have now been fixed. If at least one of these
decreasing events occurs, then $Q(A')$ is impossible
under this conditioning, and
$$
 \PP[A\cap Q(A')\mid\Omega_{\boldsymbol{\xi}}]
 =\PP[Q(A')\mid\Omega_{\boldsymbol{\xi}}]=0.
$$

Otherwise, all decreasing events indexed by $A'$ are
already avoided. The remaining requirement for $Q(A')$
is to avoid the increasing events indexed by $A'$.
After fixing the coordinates outside $S$, each of these
events remains increasing in the coordinates in $S$.
Its complement is therefore decreasing, and the
intersection of these complements is also decreasing.
Thus, under the conditioning, $Q(A')$ is a decreasing
event in the remaining coordinates.

The event $A$ is increasing in those same coordinates,
whose conditional distribution is still a product
distribution. Harris' inequality,
Lemma~\ref{lem:harris}, therefore implies
$$
 \PP[A\cap Q(A')\mid\Omega_{\boldsymbol{\xi}}]
 \leq
 \PP[A\mid\Omega_{\boldsymbol{\xi}}]\,
 \PP[Q(A')\mid\Omega_{\boldsymbol{\xi}}]
 =\PP[A]\,
 \PP[Q(A')\mid\Omega_{\boldsymbol{\xi}}].
$$
This inequality also holds when a decreasing event indexed
by $A'$ occurs, since both sides then vanish.

Next, we average over the external assignments. In the sums
below, $\boldsymbol{\xi}$ ranges over all assignments
for which $\PP[\Omega_{\boldsymbol{\xi}}]>0$.
By the law of total probability,
$$
 \begin{aligned}
 \PP[A\cap Q(A')]
 &=
 \sum_{\boldsymbol{\xi}}
 \PP[\Omega_{\boldsymbol{\xi}}]\,
 \PP[A\cap Q(A')\mid\Omega_{\boldsymbol{\xi}}]\\
 &\leq
 \PP[A]\sum_{\boldsymbol{\xi}}
 \PP[\Omega_{\boldsymbol{\xi}}]\,
 \PP[Q(A')\mid\Omega_{\boldsymbol{\xi}}]\\
 &=\PP[A]\PP[Q(A')].
 \end{aligned}
$$
Dividing by $\PP[Q(A')]>0$, we conclude that
$$
 \PP[A\mid Q(A')]
 =\frac{\PP[A\cap Q(A')]}{\PP[Q(A')]}
 \leq\PP[A].
$$
Since $A'\subseteq I\setminus T_0$ was arbitrary, $T_0$
satisfies the hypothesis on $T$ in the first assertion.
Applying that assertion with $T=T_0$, we obtain
$$
 \PP[A\mid Q(I)]
 \leq\PP[A]\prod_{i\in T_0}(1-x_i)^{-1},
$$
as claimed.
\end{proof}

\section*{AI disclosure} ChatGPT Plus was used for \LaTeX\  and English support which on occasion were adopted.

\bibliographystyle{amsplain}
\bibliography{Res_pseudo_Lit}

\appendix

\section{Recovery of the random-graph results}\label{sec:random-recovery}

All proclaimed rainbow results follow from our main results owing to the following. 

\begin{lemma}\label{lem:random-host}
Fix $\eps\in(0,1/2)$ and $c\in(0,1]$, and let
$$
 C\geq\max\{40c^{-2},1024\eps^{-2}\}.
$$
For all sufficiently large $n$ and every $C\log n/n\leq p\leq1$, the random graph $G\sim\mathbb G(n,p)$ is $(p,cpn)$-bijumbled and satisfies
$$
 \delta(G)\geq(1-\eps/4)pn
$$
with probability at least $1-3n^{-3}$.
\end{lemma}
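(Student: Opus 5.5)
The plan is to prove the two properties separately, each with failure probability at most $n^{-3}$ (or a small multiple), and take a union bound.

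\textbf{Minimum degree.} For a fixed vertex $v$, $\deg_G(v)\sim\mathrm{Bin}(n-1,p)$ has mean $m=p(n-1)\geq pn-1$. We apply Lemma~\ref{lem:bernoulli-lower-tail} with $u=\eps pn/8$, which is valid once $pn$ is large; this uses $pn\geq C\log n\to\infty$. The resulting bound is
\[
 \PP\bigl[\deg_G(v)\leq(1-\eps/4)pn\bigr]\leq\exp\bigl(-\eps^2pn/130\bigr)\leq n^{-C\eps^2/130}.
\]
Since $C\geq1024\eps^{-2}$, this is at most $n^{-7}$, and a union bound over the $n$ vertices costs at most $n^{-6}$.

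\textbf{Bijumbledness.} We need $|e_G(A,B)-p|A||B||\leq cpn\sqrt{|A||B|}$ for all $A,B$. Write $a=|A|\le b=|B|$; the case $a>b$ is symmetric.

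\emph{Diagonal correction.} The diagonal is not an issue. Write $e_G(A,B)=\sum_{x\in A,y\in B}\mathbf 1_{xy\in E}$ over ordered pairs with $x\neq y$. The $|A\cap B|\leq\sqrt{ab}$ diagonal pairs contribute a deterministic discrepancy of at most $p\sqrt{ab}$, which is negligible against $cpn\sqrt{ab}$.

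\emph{Structure of the off-diagonal sum.} The remaining sum consists of independent edge indicators, each with weight in $\{1,2\}$: an edge inside $A\cap B$ is counted twice. Its mean is at most $pab$.

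\emph{Small pairs.} If $a\le b$ and $pab$ is small compared with the target, then we are in the regime $b\geq\ldots$. Here I use the trivial bound $e_G(A,B)\leq\sum_{x\in A}\deg_G(x)$ together with a global maximum degree bound $\Delta(G)\leq 2pn$. This bound holds with probability $1-n^{-5}$ by the upper Chernoff tail, since $pn\geq C\log n$. It gives $e_G(A,B)\leq 2pna$, and $2pna\leq cpn\sqrt{ab}$ whenever $b\geq 4a/c^2$. Also $p ab\leq cpn\sqrt{ab}$ trivially, because $\sqrt{ab}\leq n$. So pairs with $b\geq4a/c^2$ are handled deterministically on the max-degree event.

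\emph{Comparable pairs.} For the remaining pairs, $a\leq b<4a/c^2$, so $\sqrt{ab}\geq b\,c/2$. I apply the weighted Chernoff bound (Lemma~\ref{lem:bounded-weight-tail}, plus the corresponding lower tail) with deviation $\lambda=cpn\sqrt{ab}/2$. A Bernstein-type bound then gives failure probability
\[
 \exp\bigl(-\Omega(\min\{\lambda^2/(pab),\lambda\})\bigr).
\]
Both terms in the minimum are $\Omega(c^2pn\,b)$: indeed $\lambda^2/(pab)=c^2pn^2/4\geq c^2pnb/4$, and $\lambda\geq c^2pnb/4$ as well.

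\emph{Union bound.} The number of pairs of sizes $(a,b)$ is at most $n^{a+b}\leq n^{2b}$. The probability bound is $\exp(-\Omega(c^2 C b\log n))$. With $C\geq 40c^{-2}$ this beats $n^{2b}\cdot n^{-b-3}$ once the absolute constants are tracked.

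\emph{Main obstacle.} The key step is exactly this constant bookkeeping: matching the Chernoff exponent against the $\binom{n}{a}\binom{n}{b}$ count in the comparable-size regime. This requires the split at ratio $4/c^2$ so that the exponent is linear in the larger set size. I expect to fine-tune the threshold and use the sharper form $\exp(-\lambda^2/(2(\mu+\lambda/3)))$ of Chernoff to fit within the stated constant $40c^{-2}$.

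Summing the failure probabilities of the max-degree event, the minimum-degree event and the pair events gives at most $3n^{-3}$.
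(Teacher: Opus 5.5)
Your route is genuinely different from the paper's. The paper takes no union bound over pairs of sets. It writes $A-\EE[A]$ as a sum of independent rank-two matrices $(X_{ij}-p)(e_ie_j^{\mathsf T}+e_je_i^{\mathsf T})$. Matrix Bernstein then gives $\|A-\EE[A]\|\leq cpn/2$ except with probability $2n\exp(-c^2pn/10)\leq2n^{-3}$. Finally, $|e_G(U,W)-p|U||W||\leq\|A-\EE[A]\|\sqrt{|U||W|}+p|U\cap W|$ settles all pairs at once. The hypothesis $C\geq40c^{-2}$ is exactly what makes $c^2pn/10\geq4\log n$, and no maximum-degree event or size split is needed. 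Your two-regime argument (maximum degree when $b\geq4a/c^2$; a scalar tail bound plus a union bound over $\binom na\binom nb$ pairs otherwise) is the classical elementary alternative and can be completed with the same $C$. However, the step you defer is exactly where the sketch is not yet a proof, and with your stated parameters it does not close.

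First, Lemma~\ref{lem:bounded-weight-tail} is too weak here. In the comparable regime one typically has $\lambda\gg pab$, and the restriction $\delta\leq1$ leaves an exponent of only about $\lambda/16$. This can be as small as $c^2pnb/64$, far below the $2b\log n$ needed against $n^{2b}$. So genuine Bernstein is required, but the unit-weight form you quote does not apply to a sum with weights up to $2$. The valid exponent is $\lambda^2/\bigl(2(2pab+2\lambda/3)\bigr)$. With $\lambda=cpn\sqrt{ab}/2$ and $\sqrt{ab}\leq cn/6$, it gives only $3cpn\sqrt{ab}/16\geq3c^2pnb/32\geq3.75\,b\log n$. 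This is below $(3b+3)\log n$ for $b\leq3$; already the $n^2$ pairs with $a=b=1$ contribute a bound of order $n^{-1.75}$.

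The repair is easy but has to be made. If $\sqrt{ab}\leq cpn$, the inequality holds deterministically, since $|e_G(A,B)-pab|\leq ab\leq cpn\sqrt{ab}$. So only pairs with $b\geq\sqrt{ab}>cpn\geq40\log n/c$ need probability, and for those $n^{2b}\cdot2n^{-3.75b}$ is negligible. When $\sqrt{ab}>cn/6$, the exponent is at least $c^2pn^2/32\geq1.25\,n\log n$, which beats the at most $4^n$ pairs. Alternatively, take the full deviation $(c-1/n)pn\sqrt{ab}$ (the diagonal costs only $p\sqrt{ab}$) instead of half of it. This raises the exponent to about $7.5\,b\log n$, which is enough for every $b\geq1$.

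Lastly, in the unbalanced regime, $pab\leq cpn\sqrt{ab}$ does not follow from $\sqrt{ab}\leq n$, because the factor $c$ is missing. It does follow from $\sqrt{ab}\leq cb/2\leq cn/2$, which your condition $b\geq4a/c^2$ supplies. With these repairs your argument becomes a complete proof using only scalar concentration. The price is the case analysis and constant tracking that the spectral argument avoids.
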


% \begin{proof}
% Fix $\eps\in(0,1/2)$ and let $c,\mu,L,M,n_0$ be the constants in Theorem~\ref{thm:main}. Decrease $c$, if necessary, so that $c\leq1$. Let $B$ be the constant in Lemma~\ref{lem:spread-threshold-binomial}, put $D=BL$, and choose
% $$
%  C\geq\max\{D,40c^{-2},1024\eps^{-2}\}.
% $$
% For $p\geq C\log n/n$, the bound $pn\geq M$ holds for all sufficiently large $n$. Lemma~\ref{lem:random-host} supplies a common host event with probability at least $1-3n^{-3}$.

% On this event, every absolute $\eps$-Dirac subgraph meets the first alternative of Theorem~\ref{thm:main}. Every relative $\eps$-Dirac subgraph meets its second alternative, including the host degree condition. The theorem therefore supplies an $L/(pn)$-spread measure for every eligible $H$ and conflict graph $\Gamma$. Corollaries~\ref{cor:conflict-counting} and~\ref{cor:conflict-percolation} give the counting and percolation conclusions. Joining equally coloured edges in a conflict graph gives all rainbow assertions.

% The choices of $H$ and $\Gamma$ are made after fixing a host in this one event. The host conclusion is therefore simultaneous. The percolation conclusion is a uniform bound for each choice fixed before the independent retention; it does not require simultaneous success for every choice under one percolation.
% \end{proof}

\begin{proof}
Let $A$ be the adjacency matrix of $G$, let $\mathbf e_i$ be
the $i$th coordinate vector, and let $X_{ij}$ indicate the edge
$ij$ for $i<j$. Then,
$$
 A-\EE[A]
 =\sum_{i<j}(X_{ij}-p)
  (\mathbf e_i\mathbf e_j^{\mathsf T}
    +\mathbf e_j\mathbf e_i^{\mathsf T}).
$$
The summands are independent self-adjoint matrices with mean
zero and operator norm at most one. The sum of their expected
squares is $(n-1)p(1-p)I$, whose norm is at most $pn$.
The matrix Bernstein inequality, applied to the summands and
their negatives~\cite[Theorem~1.4]{Tropp2012}, gives
$$
 \PP[\|A-\EE[A]\|\geq u]
 \leq2n\exp\left(-\frac{u^2}{2(pn+u/3)}\right).
$$
With $u=cpn/2$ and $c\leq1$, the exponent satisfies
$$
 \frac{u^2}{2(pn+u/3)}
 =\frac{c^2pn}{8(1+c/6)}\geq\frac{c^2pn}{10}.
$$
By our choice of $C$, the probability of failure is at most
$2n^{-3}$. On the complementary event, for arbitrary
$U,W\subseteq V(G)$, not necessarily disjoint,
\begin{align*}
 |e_G(U,W)-p|U||W||
 &\leq\|A-\EE[A]\|\sqrt{|U||W|}+p|U\cap W|\\
 &\leq(cpn/2+p)\sqrt{|U||W|}\\
 &\leq cpn\sqrt{|U||W|},
\end{align*}
where the last inequality holds for $n\geq2/c$.
Here, the term $p|U\cap W|$ accounts for the zero diagonal,
since $\EE[A]=p(\mathbf1\mathbf1^{\mathsf T}-I)$.

For the degree bound, write $m=(n-1)p$.
When $n\geq8/\eps$, we have
$m\geq(1-\eps/8)pn$, whence
$$
 (1-\eps/8)m
 \geq(1-\eps/8)^2pn
 \geq(1-\eps/4)pn.
$$
The binomial Chernoff lower-tail inequality gives, for each
vertex $v$,
$$
 \PP[\deg_G(v)<(1-\eps/4)pn]
 \leq\exp(-\eps^2m/128)
 \leq\exp(-\eps^2pn/256).
$$
A union bound and $C\geq1024\eps^{-2}$ show that
the asserted minimum-degree bound fails with probability at
most $n^{-3}$. Intersecting the two host events gives the
required common event, with probability at least $1-3n^{-3}$.
\end{proof}

\end{document}